\newtheorem{theorem}{Theorem}[section]
\newtheorem{lemma}{Lemma}[section]
\newtheorem{corollary}[theorem]{Corollary}
\newtheorem{assumption}{Assumption}
\theoremstyle{definition}
\newtheorem{definition}{Definition}[section]
\newtheorem{remark}{Remark}[section]
\numberwithin{equation}{section}
\renewcommand{\labelenumi}{\roman{enumi})}
\renewcommand\theenumi\labelenumi
\renewcommand{\leq}{\leqslant}
\renewcommand{\le}{\leqslant}
\renewcommand{\geq}{\geqslant}
\renewcommand{\ge}{\geqslant}
\newcommand{\Be}{\begin{equation}}
\newcommand{\Ees}{\end{equation*}}
\newcommand{\Bes}{\begin{equation*}}
\newcommand{\Ee}{\end{equation}}
\newcommand{\R}{\mathbb{R}}
\newcommand{\E}{\mathbb{E}}
\newcommand{\PP}{\mathbb{P}}
\newcommand{\mcl}{\mathcal}
\newcommand{\wtalpha}{\widetilde{\alpha}}
\newcommand{\wtsigma}{\widetilde{\sigma}}
\newcommand{\wtepsilon}{\widetilde{\epsilon}}
\newcommand{\wtX}{\widetilde{X}}
\newcommand{\wtY}{\widetilde{Y}}
\newcommand{\wtS}{\widetilde{S}}
\newcommand{\wtA}{\widetilde{A}}
\newcommand{\wtK}{\widetilde{K}}
\newcommand{\wtchi}{\widetilde{\chi}}
\newcommand{\whX}{\widehat{X}}
\newcommand{\hP}{\bar{P}}
\newcommand{\whP}{\widetilde{P}} %
\newcommand{\whQ}{\widehat{Q}}
\newcommand{\olQ}{\overline{Q}}
\newcommand{\dtv}{d_{\mathrm{TV}}}
\newcommand{\op}{\mathrm{op}}
\newcommand{\dif}{\mathrm{d} }
\newcommand{\eup}{\mathrm{e}}
\DeclareMathOperator{\sgn}{sgn}
\DeclareMathOperator{\Leb}{Leb}
\newcommand*\abs[1]{\left\lvert#1\right\rvert}
\newcommand*\norm[1]{\left\lVert#1\right\rVert}
\begin{document}
\title[Stable CLT in TV distance]{Stable central limit theorem in total variation distance}

\date{}

\author{Xiang Li}
\address[X. Li]{1. Department of Mathematics, Faculty of Science and Technology, University of Macau, Macau S.A.R., China. 2. Zhuhai UM Science \& Technology Research Institute, Zhuhai, China.}
\email{yc07904@um.edu.mo}

\author{Lihu Xu}
\address[L. Xu]{1. Department of Mathematics, Faculty of Science and Technology, University of Macau, Macau S.A.R., China. 2. Zhuhai UM Science \& Technology Research Institute, Zhuhai, China.}
\email{lihuxu@um.edu.mo}

\author{Haoran Yang}
\address[H. Yang]{1. School of Mathematical Sciences, Peking University, Beijing, China. 2. Beijing International Center for Mathematical Research (BICMR), Peking University, Beijing, China.}
\email{yanghr@pku.edu.cn}

\thanks{}

\subjclass{Primary 60E07, 60F05; Secondary 60J05}

\keywords{stable central limit theorem, total variation distance, optimal convergence rate, measure decomposition, backward induction on $\alpha$}

\begin{abstract}
Under certain general conditions, we prove that the stable central limit theorem holds in the total variation distance and get its optimal convergence rate for all $\alpha \in (0,2)$. Our method is by two measure decompositions, one step estimates, and a very delicate induction with respect to $\alpha$. One measure decomposition is light tailed and borrowed from \cite{BC16}, while the other one is heavy tailed and indispensable for lifting convergence rate for small $\alpha$. The proof is elementary and composed of the ingredients at the postgraduate level.  Our result clarifies that when $\alpha=1$ and $X$ has a symmetric Pareto distribution, the optimal rate is $n^{-1}$ rather than $n^{-1} (\ln n)^2$ as conjectured in literatures.   
\end{abstract}
\maketitle

\section{Introduction and Main Results}

It is well known that Kolmogorov distance is an important measurement for probability distributions and has enormous applications in statistics, e.g., empirical process and Kolmogorov-Smirnov statistic. However, Kolmogorov distance only works well for one dimensional probability. For multidimensional probability distributions, one often uses the total variation (TV) distance as a replacement, it is much stronger than Kolmogorov distance and therefore much harder to be handled.  

  
  The one dimensional normal central limit theorem (CLT) in TV distance was first initialized in \cite{Prohorov1952} by Prohorov, while the development of the multidimensional normal CLT is much more involved, see \cite{SM1962,Bhattacharya1968} for the CLT on special sets and \cite{BR2010}  for a review in this direction. Surprisingly, the existence and the optimal convergence rate of the multidimensional normal CLT were figured out by Bally and Caramellino \cite{BC16} in 2016.  
  
  
For heavy tailed random variables without second moment, the stable CLT holds as a natural replacement of the normal one. Its convergence rate in Kolmogorov distance was intensively studied by Hall in 1970s and 1980s, see \cite{H8102,H8109}. Although Hall obtained in \cite{H8109}  two sided bounds, they do not imply an optimal rate. In 1999, Kuske and Keller first provided in \cite{KK2000} an optimal rate in Kolmogorov distance for $\alpha \in (1,2)$ as the heavy tailed random variables have symmetric Pareto distribution. Stable CLT in 1-Wasserstein distance was recently studied in a series of papers, see \cite{Xu201902,Xu2021} for $\alpha \in (1,2)$, \cite{Xu2022} for $\alpha \in (0,1]$, and \cite{Xu2023}  for the multidimensional case.

To the best of our knowledge, it has not been known whether the stable CLT holds in the TV distance, not to mention its convergence rate. In this paper, we show that as long as the heavy tailed random variables are i.i.d. with a distribution which falls in the domain of normal attraction of stable distribution and is locally lower bounded by the Lebesgue measure, then the stable CLT holds in TV distance with a certain convergence rate. We further prove that this rate is optimal by showing a lower bound for the symmetric Pareto distributed random variables. When the dimension is one, our result immediately implies the optimal convergence rate in Kolmogorov distance.

Our result clarifies that when $\alpha=1$ and $X$ has a symmetric Pareto distribution, the optimal rate is $n^{-1}$ rather than $n^{-1} (\ln n)^2$ as conjectured in literatures.  



 \subsection{Preliminary and Notations}
 Throughout the paper, $\R^d$ denotes the $d$-dimensional Euclidean space, with norm $\abs{\cdot}$ and scalar product $\langle \cdot, \cdot \rangle$. The open ball centered at $x \in \R^d$ with a radius of $R > 0$ is denoted by $B (x, R) = \{ y \in \R^d \colon \abs{y - x} < R \}$.
For $r, s \in \R$, we denote
$r \lor s = \max \{r, s\}$, $r \land s = \min \{r, s\}$, and $[r] = \max \{ z \in \mathbb{Z} \colon z \leq r \}$.

Denote by $\mathcal{B}_{b} (\R^d)$ the set of bounded measurable functions $f \colon \mathbb{R}^{d}\to \mathbb{R}$. If $f$ is sufficiently smooth, $\partial_i f = \partial f / \partial x_i$, $i = 1, \dots, d$ denote its partial derivatives, and $\nabla f$ denotes the gradient of $f$. Given $v \in \R^d$, the directional derivative of $f$ in the direction $v$ is denoted by $\nabla_v f (x)  = \langle \nabla f (x) , v \rangle$, $x \in \R^d$. For $\kappa = 1, 2, \dots$, we denote by $\nabla^{\kappa} f \colon \R^d \to \R^{d^{\otimes \kappa}}$ the $\kappa$-th order gradient of $f$, and by convention $\nabla^{0} f = f$. We further denote
\begin{align*}
    \mathcal{C}_{b}^{\kappa} (\R^d) = \left\{ f \colon \mathbb{R}^{d}\to \mathbb{R}; f, \nabla f, \dots, \nabla^{\kappa} f \text{ are continuous and bounded} \right\},
\end{align*}
and for $f \in \mathcal{C}_{b}^{\kappa} (\R^d)$,
\begin{align*}
    \norm{\nabla^{\kappa} f}_{\op, \infty} = \sup \left\{ \abs{\nabla_{v_1} \dots \nabla_{v_\kappa} f (x)} \colon x, v_1, \dots, v_\kappa \in \R^d; \abs{v_1}, \dots, \abs{v_\kappa} \leq 1 \right\}.
\end{align*}

For two probability measures $\mu$ and $\nu$ defined on $\mathbb{R}^{d}$, the total variation distance between them is given by 
\begin{align*}
    \dtv(\mu,\nu)=\inf_{\pi\in \Pi (\mu,\nu)} \left\{ \int_{\mathbb{R}^{d}\times \mathbb{R}^{d}} \mathbf{1}_{\{x\neq y\}} \pi(\mathrm{d}x,\mathrm{d}y) \right\},
\end{align*}
where $\Pi (\mu,\nu)$ is the class composed of all couplings of $\mu$ and $\nu$. It is well known that, by Kantorovich-Rubinstein theorem \cite{Villani2003},
\begin{align} \label{e:KR-TV}
    \dtv(\mu,\nu)=\sup_{f\in \mathcal{B}_{b}(\R^d), \, \norm{f}_\infty \leq 1} \abs{ \int_{\R^d} f(x) \mu (\dif x) - \int_{\R^d} f(x) \nu (\dif x)}.
\end{align}

Finally, we remark that $C$ denotes a positive constant which may be different even in a single chain of inequalities.
\begin{definition}
     Let $\mathbb{S}^{d-1}$ be the unit sphere in $\mathbb{R}^{d}$, i.e., $\mathbb{S}^{d-1}=\{ u\in \mathbb{R}^{d} \colon |u|=1 \}$ and $\alpha\in (0,2)$. We say that a random vector $Y$ has a multivariate strictly $\alpha$-stable distribution with spectral measure $\nu$ (finite measure on $\mathbb{S}^{d-1}$) denoted as $S_{\alpha}(\nu)$, if the characteristic function of $Y$ is given by
    \begin{align*}
        \mathbb{E}\left[e^{i\langle \lambda,Y \rangle} \right]
        = \exp\left( -\int_{\mathbb{S}^{d-1}} \psi_\alpha(\langle \lambda,\theta \rangle) \nu(\dif \theta)\right), \quad \lambda\in \mathbb{R}^{d},
    \end{align*}
    where $\psi_\alpha: \R \rightarrow \mathbb C$ is defined as 
    \begin{align*}
    \psi_\alpha(t) = \begin{cases}
        \abs{t}^{\alpha}\left(1-i \sgn (t) \tan \frac{\pi \alpha}{2}\right), &\alpha \in (0, 2) \setminus \{1\}; \\
\abs{t} \left(1+i  \frac{2}{\pi} \sgn (t) \ln \abs{t} \right), &\alpha=1.
\end{cases}
\end{align*}
\end{definition}
\begin{remark}
   In this paper we always assume $\int_{\mathbb{S}^{d-1}} \theta \nu(\dif \theta)=\mathbf{0}$ when $\alpha=1$ to avoid a singularity which is not typical. Without loss of generality, we also assume $\nu$ to be a probability measure on $\mathbb{S}^{d-1}$ throughout this paper.
\end{remark}

    Let $(\widehat{Y}_t)_{t \geqslant 0}$ be a $d$-dimensional Lévy process satisfying
    \begin{align*}
        \mathbb{E}\left[e^{i\langle \lambda, \widehat{Y}_t \rangle} \right]=\exp\left(-t\int_{\mathbb{S}^{d-1}}\psi_\alpha(\langle \lambda, \theta \rangle ) \nu(\dif \theta) \right).
    \end{align*}
    According to \cite[Theorem 14.10]{Ken1999}, the above characteristic function also has the following Lévy–Khintchine representation:
    \begin{align} \label{def:mY}
        \mathbb{E}\left[e^{i\langle \lambda, \widehat{Y}_t \rangle} \right]
        =\exp\left(- d_{\alpha}t \int_{\mathbb{S}^{d-1}} \nu(\dif \theta ) \int_{0}^{+\infty} \left(e^{i\langle \lambda, r\theta\rangle}-1-ik_{\alpha}(r)\langle \lambda,r\theta \rangle  \right) \frac{\dif r}{r^{1+\alpha}}\right),
    \end{align}
    where $k_\alpha(r)=\mathbf{1}_{ \{ \alpha=1, r \in(0,1] \} }+\mathbf{1}_{ \{ \alpha \in(1,2) \} }$ and $d_{\alpha}=\left(\int_{0}^{+\infty} \frac{1-\cos y}{y^{1+\alpha}} \, \dif  y\right)^{-1}$.

It is clear that $\widehat{Y}_{1}\sim S_{\alpha}(\nu)$ and $\widehat{Y}_{t}\stackrel{d}{=}t^{\frac{1}{\alpha}}Y$ (i.e., $\widehat{Y}_{t}$ and $t^{\frac{1}{\alpha}}Y$ have the same distribution). By \cite[Theorem 3.3.3]{Applebaum2009}, one can verify that the generator of $\widehat{Y}_{t}$ is given by
\begin{align} \label{mYgenerator}
    \mathcal{L}^{\alpha,\nu}f(x)=d_{\alpha} \int_{\mathbb{S}^{d-1}} \nu(\dif \theta) \int_{0}^{+\infty} \frac{f(x+r\theta)-f(x)-k_{\alpha}(r)\langle \nabla f(x), r\theta\rangle}{r^{1+\alpha}} \dif r, \ \ \ f \in \mathcal C_b^2(\mathbb R^d). 
\end{align}

\begin{definition}[Domain of attraction]
    For $\alpha\in (0,2)$ and a spectral measure $\nu$, we say that a distribution $\mu$ is in the domain of attraction of the strictly stable law $S_\alpha (\nu)$ if there exists a sequence of location vectors $(b_n)_{n \geqslant 1}$ and a slowly varying function $h \colon [0,+\infty)\to (0,+\infty)$ (i.e., for any $k>0$, $\lim_{x\to +\infty}h(kx)/h(x)=1$), such that
\begin{align*}
	\frac{\sum_{i = 1}^n X_i - b_n}{n^{1 / \alpha}h(n)} \Rightarrow S_\alpha (\nu),
\end{align*}
where $(X_n)_{n \geqslant 1}$ is a sequence of i.i.d.\ random variables with common distribution $\mu$.
\end{definition}
\begin{remark}
     When $d=1$, a distribution $\mu$ is said to be in the domain of normal attraction of $S_{\alpha}(\nu)$, if there exists a positive constant $c$ such that
    \begin{align*}
	\frac{\sum_{i = 1}^n X_i - b_n}{cn^{1 / \alpha}} \Rightarrow S_\alpha (\nu).
\end{align*}
  It has been shown in \cite[Theorem 2.6.7]{Ibragimov1971} that a distribution is in the domain of normal attraction of the stable law $S_\alpha (\nu)$ if and only if its corresponding distribution function is of the form 
\begin{align} \label{Xcdf}
    F (x)=\left[ 1-\frac{A+\epsilon(x)}{x^{\alpha}}\nu(\{1\}) \right] \mathbf{1}_{[0, +\infty)}(x)+\left[ \frac{A+\epsilon(x)}{(-x)^{\alpha}}\nu(\{-1\}) \right] \mathbf{1}_{(-\infty, 0)}(x),
\end{align}
where $A>0$ and $\epsilon: \mathbb{R} \rightarrow \mathbb{R}$ is a bounded measurable function vanishing at infinity.
\end{remark}


\subsection{Main results and the strategy of the proofs}
Let us first state our assumptions before giving the main results.  The first assumption, Assumption \ref{A:mX}, is a   {multi-dimensional} version of normal domain of attraction with an additional condition $\gamma>0$. 
\begin{assumption} \label{A:mX}
    Let $\alpha\in (0,2)$ and $\nu$ be a probability measure on $\mathbb{S}^{d-1}$. Suppose $X$ is a $d$-dimensional random vector such that for any $r > 0$ and $B\in \mathscr{B}(\mathbb{S}^{d-1})$, 
    \begin{align} \label{def:mX}
        \mathbb{P}\left(\abs{X} \ge r, \, \frac{X}{\abs{X}}\in B \right)=\int_{B} \frac{A+\epsilon(r,\theta)}{r^{\alpha}} \nu(\dif \theta),
    \end{align}
    where $A>0$ and $\epsilon \colon \mathbb{R_+} \times \mathbb{S}^{d-1} \to \mathbb{R}$ is a bounded measurable function satisfying 
    \begin{align*}
        \sup_{\theta\in \mathbb{S}^{d-1}} \abs{\epsilon(r,\theta)}\le K ( 1 \wedge r^{-\gamma} ),
    \end{align*}
    for some $K,\gamma>0$.
\end{assumption}

\begin{definition}
    A probability distribution $\mu$ in $\mathbb{R}^d$ is said to be locally lower bounded by the Lebesgue measure, if there exists a constant $\varepsilon_{0}>0$ and an open set $D_{0} \subseteq \mathbb{R}^d$ such that
    \begin{align}
        \mu(E) \geq \varepsilon_0 \Leb (E \cap D_0), \quad \forall E \in \mathscr{B} (\mathbb{R}^d ),
    \end{align}
    where $\Leb (\cdot)$ denotes the Lebesgue measure.
\end{definition}

\begin{assumption} \label{A:nu}
    Suppose that the spectral measure $\nu$ on $\mathbb{S}^{d-1}$ satisfies one of the following assumptions,

    \noindent(i) $\nu$ possesses a density function $\varrho (\theta)$ with respect to the surface measure on $\mathbb{S}^{d-1}$, and there exists a constant $\varrho_0 > 1$ such that
    \begin{align*}
        \varrho_0^{-1} \leq \varrho (\theta) \leq \varrho_0, \quad \forall \theta \in \mathbb{S}^{d-1}.
    \end{align*}

    \noindent(ii) $\nu$ is symmetric, i.e., $\nu (B) = \nu (-B)$ for any $B \in \mathscr{B}(\mathbb{S}^{d-1})$ (the class of Borel measurable sets on $\mathbb{S}^{d-1}$). And there exist constants $c > 0$ and $\eta \in (d - \alpha, d] \cap [1, +\infty)$ such that
    \begin{align*}
        \nu ( B_{\mathbb{S}^{d-1}} (\theta, R) ) \leq c R^{\eta - 1}, \quad \forall \theta \in \mathbb{S}^{d-1}, \, R \in (0, 1),
    \end{align*}
    where $B_{\mathbb{S}^{d-1}} (\theta, R) = \{ \vartheta \in \mathbb{S}^{d-1} \colon \abs{\vartheta - \theta} < R \}$.
\end{assumption}

Let $X_{1}, X_{2}, \dots$ be i.i.d.\ copies of $X$, and $\mu_n^\alpha$ denotes the distribution of $S_n$ defined by
\begin{align*}
    S_{n} = \frac{1}{n^{1 / \alpha} \sigma} \sum_{i=1}^{n}(X_{i}-\omega_{n,\alpha}),
\end{align*}
where 
\begin{align} \label{mu}
    \sigma=\left(A \alpha \int_{0}^{+\infty} \frac{1-\cos y}{|y|^{1+\alpha}} \, \dif y\right)^{\frac{1}{\alpha}}, \qquad
    \omega_{n,\alpha}= \begin{cases} \mathbb{E} X_{1}, & \alpha \in(1,2); \\ \mathbb{E}\left[X_{1} \mathbf{1}_{(0, \sigma n]}\left(\left|X_{1}\right|\right)\right], & \alpha=1; \\
    \mathbf{0}, & \alpha \in(0,1). \end{cases}
\end{align}

\begin{theorem} \label{theorem1}
    Assume the common distribution of $X_1, X_2, \dots$ is locally lower bounded by the Lebesgue measure and satisfies Assumption \ref{A:mX} with $\alpha \in (0,2)$, $\gamma > 0$. Suppose that $\nu$ satisfies Assumption \ref{A:nu}. Then there exists a constant $C$ not depending on $n$, such that
    
    \noindent (i) For $\alpha \in (1, 2)$,
    \begin{align*}
        \dtv (\mu_n^\alpha, S_\alpha (\nu)) 
        &\leqslant C \left[ n^{- \frac{2 - \alpha}{\alpha}} + n^{-\frac{\gamma}{\alpha}} (\ln n)^{\mathbf{1}_{ \{ \gamma = 2 - \alpha \} }} \right], \quad \forall n \geq 1.
    \end{align*}
    
    \noindent (ii) For $\alpha = 1$ and $\nu$ satisfying $\int_{\mathbb{S}^{d-1}} \theta \nu ( \dif \theta ) = \mathbf{0}$,
    \begin{align*}
        \dtv (\mu_n^1, S_1 (\nu)) 
        \leqslant C \left[ n^{-1} + n^{- \gamma} (\ln n)^{\mathbf{1}_{ \{ \gamma = 1 \} }} \right], \quad \forall n \geq 1.
    \end{align*}
    
    \noindent (iii) For $\alpha \in (0, 1)$,
    \begin{align*}
        \dtv (\mu_n^\alpha, S_\alpha (\nu)) \leqslant C \left[ n^{-1} + n^{-\rho_{\alpha, \gamma}} (\ln n)^{\mathbf{1}_{ \{ \gamma = 1 - \alpha \} }} \right], \quad \forall n \geq 1.
    \end{align*}
    where
    \begin{align*}
        \rho_{\alpha, \gamma} = \begin{cases}
            \frac{1 - \alpha}{\alpha}, &\gamma \in (1 - \alpha, +\infty); \\
            \frac{\gamma}{1 - \gamma}, &\gamma \in (0, 1 - \alpha].
        \end{cases}
    \end{align*}
    If the distribution of $X$ is symmetric in the sense that $\int_{\mathbb{S}^{d-1}} \theta \epsilon (r, \theta) \nu (\dif \theta) = \mathbf{0}$, $\forall r > 0$, the conclusion still holds with $\rho_{\alpha, \gamma}$ replaced by $\frac{\gamma}{\alpha \lor (1 - \gamma)}$, i.e.
    \begin{align*}
        \dtv (\mu_n^\alpha, S_\alpha (\nu)) \leqslant C \left[ n^{-1} + n^{-\frac{\gamma}{\alpha \lor (1 - \gamma)}} (\ln n)^{\mathbf{1}_{ \{ \gamma = 1 - \alpha \} }} \right], \quad \forall n \geq 1.
    \end{align*}
\end{theorem}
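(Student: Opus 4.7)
The plan is to bound the TV distance through the Kantorovich--Rubinstein representation \eqref{e:KR-TV}, reducing the problem to a uniform estimate of $\E[f(S_n)] - \E[f(\widehat{Y}_1)]$ over bounded measurable $f$ with $\norm{f}_\infty \le 1$. Since $f$ is only measurable, the central task is to convert this lack of regularity into smoothness produced by convolving the $n$ independent summands, which is the step that lifts existing weak or Wasserstein-type CLTs to the TV scale. I would express the total difference as a telescoping Lindeberg-type swap between $S_n$ and an i.i.d.\ sequence $(Y_i)$ chosen so that $\sum_{i=1}^n Y_i/(n^{1/\alpha}\sigma) \stackrel{d}{=} \widehat{Y}_1$, and prove a sharp one-step estimate whose sum over the $n$ swaps matches the target rate.

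Smoothness is produced by a two-fold decomposition of the law of $X$. First, the hypothesis that the law is locally lower bounded by Lebesgue measure lets me write $X \stackrel{d}{=} B V + (1 - B) W$, where $B$ is a Bernoulli variable with mean $\e_0 \Leb(D_0)$ and $V$ has a bounded compactly supported $\mathcal C_b^\kappa$ density on $D_0$; when $n$ copies are summed, the roughly $n \e_0 \Leb(D_0)$ samples from $V$ convolve to a globally smooth density, which lets me replace a rough $f$ by a mollified $f \ast \phi$ at a controlled cost, as in Bally--Caramellino \cite{BC16}. Second, I peel off from the tail of $X$ a jump whose polar distribution mirrors the Lévy measure of $S_\alpha(\nu)$ read from \eqref{def:mY} on $\{\abs{x} > R_n\}$ for a suitable cutoff $R_n$; what is left has faster decay than the $\alpha$-stable tail and effectively behaves like a stable variable of some intermediate index $\alpha' \in (\alpha, 2)$. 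The heavy-tailed piece is indispensable for small $\alpha$: a single light-tailed decomposition cannot carry enough mass to infinity to reproduce a stable jump under the scaling $n^{-1/\alpha}$.

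The quantitative engine is the one-step estimate. Conditioning on all summands but the $k$-th, I would bound $\E[f(\Sigma_{-k} + X_k/(n^{1/\alpha}\sigma))] - \E[f(\Sigma_{-k} + Y_k/(n^{1/\alpha}\sigma))]$ by a Taylor expansion of a suitable order $\kappa$ matched against the generator $\mathcal L^{\alpha,\nu}$ of \eqref{mYgenerator}; the derivatives of $f$ are absorbed by the smoothing built into $\Sigma_{-k}$ from the $V_i$'s. This produces two kinds of errors: a principal matching error, giving after summation over $k$ the rate $n^{-(2-\alpha)/\alpha}$ for $\alpha \in (1,2)$ or $n^{-1}$ for $\alpha \le 1$, and a tail-correction error driven by $\epsilon(r,\theta)$, giving $n^{-\gamma/\alpha}$, $n^{-\gamma}$, or $n^{-\rho_{\alpha,\gamma}}$ depending on the regime. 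The logarithmic factor at the critical value of $\gamma$ is the standard resonance between these two exponents, and the sharper exponent $\gamma/(\alpha \lor (1-\gamma))$ in the symmetric sub-case of (iii) arises because the condition $\int_{\mathbb S^{d-1}} \theta\,\epsilon(r,\theta)\nu(\dif\theta) = \mathbf{0}$ kills the first-order correction and permits a smaller heavy-tail cutoff.

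The main obstacle, and the reason a plain induction is not available, is that the residual factor of the heavy-tailed decomposition is itself stable-like at a higher index $\alpha' \in (\alpha, 2)$, so the TV contribution it generates invokes the TV-CLT at $\alpha'$. I would therefore organise the proof as a backward induction on $\alpha$: handle $\alpha \in (1, 2)$ first, where only the Bally--Caramellino light-tailed decomposition is needed and $X$ has a first absolute moment, then for $\alpha \in (0, 1]$ descend step by step, each new $\alpha$ drawing on all higher indices already completed. The delicate bookkeeping is to tune the cutoff $R_n$ and the smoothing order $\kappa$ so that, at every $\alpha$, the rate returned by the inductive hypothesis at $\alpha' > \alpha$ is strictly better than the target rate at $\alpha$, preventing any blow-up across the recursion; the three cases (i)--(iii) and the symmetric sub-case of (iii) are precisely the regimes in which different trade-offs are optimal.
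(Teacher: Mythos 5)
Your sketch follows essentially the same route as the paper: Kantorovich--Rubinstein duality, the Lindeberg-type telescoping identity $Q_n f - P_n f = \sum_{k} Q_{n-k-1}(Q_1 - P_1)P_k f$, two measure decompositions (a compactly supported bump component obtained from the local Lebesgue lower bound, yielding Gaussian-type $m^{-\kappa/2}$ smoothing, and a heavy-tailed component whose residual satisfies Assumption~\ref{A:mX} with a higher index $\widetilde\alpha \in (\alpha, 2)$), a generator-based one-step estimate, and a backward induction on $\alpha$. One substantive imprecision: you assert that $\alpha \in (1,2)$ can be settled with the light-tailed decomposition alone and only smaller $\alpha$ requires the induction. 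This is false for small $\gamma$. The bump component contributes $\lVert\nabla^\kappa \widehat Q_m f\rVert_{\op,\infty}\lesssim n^{\kappa/\alpha} m^{-\kappa/2}$, whereas the heavy-tailed $\widehat P_m$ gives $n^{\kappa/\alpha} m^{-\kappa/\widetilde\alpha}$; the $m^{-\kappa/2}$ rate only beats the $n^{\kappa/\alpha}$ cost when $\alpha$ is close enough to $2$. Concretely, the paper's base case covers only $\alpha \in ((2-\sqrt{2\gamma})\lor 1, 2)$, and for $\gamma < 1/2$ the backward induction is already invoked inside $(1, 2-\sqrt{2\gamma}]$, then again for $\alpha = 1$, and again in $(0,1)$. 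The remainder of your blueprint matches the paper's argument.
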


The following theorem shows that the order of convergence rate in the above theorem is optimal.

\begin{theorem} \label{corollary1}
		Suppose $(X_n)_{n \geqslant 1}$ are i.i.d.\ $d$-dimensional random vectors with Pareto distribution of index $\alpha \in (0, 2)$, i.e., the density function of $X_n$ is
		\begin{align*}
		  p(x) = \frac{\alpha \Gamma (\frac{d}{2} + 1)}{\pi^{d / 2} d \abs{x}^{d + \alpha}} \mathbf{1}_{[1, +\infty)} (\abs{x}),
		\end{align*}
		where $\Gamma (z) = \int_0^{+\infty} t^{z-1} e^{-t} \, \dif t$ denotes the gamma function. Then for any $\alpha \in (0,2)$ and $d \geq 1$, there exist constants $C, c > 0$ such that
		\begin{align*}
            c n^{-\frac{(2 - \alpha) \land \alpha}{\alpha}}
            \leq \dtv ( \mu_n^\alpha, S_\alpha (\nu_0) )
            \leq C n^{-\frac{(2 - \alpha) \land \alpha}{\alpha}}, \quad \forall n \geq 1,
		\end{align*}
		where $\nu_0$ denotes the uniform probability measure on $\mathbb{S}^{d-1}$.
\end{theorem}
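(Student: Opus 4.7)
The upper bound follows directly from Theorem \ref{theorem1}. A polar-coordinate calculation shows that the Pareto law verifies Assumption \ref{A:mX} with $A = 1$, $\nu = \nu_0$, and $\epsilon(r, \theta) = (r^\alpha - 1)\mathbf{1}_{\{r < 1\}}$: since $\epsilon \equiv 0$ on $\{r \geq 1\}$ the decay $|\epsilon| \leq K(1 \wedge r^{-\gamma})$ holds for every $\gamma > 0$, the symmetry $\int \theta\, \epsilon(r,\theta)\, \nu_0(d\theta) = 0$ is automatic from the $\theta$-independence of $\epsilon$, Assumption \ref{A:nu}(i) is immediate for $\nu_0$, and the density bounds Lebesgue measure from below on any compact annulus $\{1 < R_1 \leq |x| \leq R_2\}$. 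Taking $\gamma$ sufficiently large in each of the three cases of Theorem \ref{theorem1} kills the subleading terms and yields $\dtv(\mu_n^\alpha, S_\alpha(\nu_0)) \leq C n^{-((2-\alpha)\wedge\alpha)/\alpha}$.

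For the lower bound the approach is Fourier-analytic. Spherical symmetry gives $\mathbb{E}[e^{i\langle t, X\rangle}] = g(|t|)$ and a polar-coordinate computation yields
\begin{align*}
g(s) = 1 + \alpha s^\alpha \int_s^\infty \frac{J(u) - 1}{u^{1+\alpha}}\, du = 1 - \sigma^\alpha K_\alpha\, s^\alpha + \frac{\alpha}{2d(2-\alpha)}\, s^2 + O(s^4),
\end{align*}
where $J(u) := \int_{\mathbb{S}^{d-1}} e^{iu\theta_1}\, \nu_0(d\theta) = 1 - u^2/(2d) + O(u^4)$, $\sigma^\alpha$ is the normalisation from \eqref{mu}, and $K_\alpha := \int_{\mathbb{S}^{d-1}} |\theta_1|^\alpha\, \nu_0(d\theta)$ is the constant in $\varphi_\alpha(t) := \mathbb{E}[e^{i\langle t, S_\alpha(\nu_0)\rangle}] = e^{-K_\alpha |t|^\alpha}$. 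Raising to the $n$-th power at $s = |t|/(\sigma n^{1/\alpha})$ and expanding $\log g$ to second order gives, uniformly on compact sets in $t$,
\begin{align*}
\varphi_n(t) - \varphi_\alpha(t) = \varepsilon_n\, \psi(t)\, \varphi_\alpha(t) + o(\varepsilon_n), \qquad \varepsilon_n := n^{-\frac{(2-\alpha)\wedge \alpha}{\alpha}},
\end{align*}
with $\varphi_n$ the characteristic function of $\mu_n^\alpha$ and the correction $\psi$ depending on $\alpha$: (i) $\psi(t) = \frac{\alpha}{2d(2-\alpha)\sigma^2}|t|^2$ for $\alpha \in (1,2)$ (the $s^2$ term in $g$ dominates); (ii) $\psi(t) = -\tfrac{K_\alpha^2}{2}|t|^{2\alpha}$ for $\alpha \in (0,1)$ (the square of the $s^\alpha$ term in $\log g$ dominates); (iii) for $\alpha = 1$ both contributions sit at order $n^{-1}$ and combine linearly, and crucially the symmetry of $\nu_0$ rules out any logarithmic enhancement, precisely the point highlighted in the abstract.

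To convert the expansion into a TV lower bound, choose a Schwartz function $f$ with $\|f\|_\infty \leq 1$ whose Fourier transform $\hat f$ is nonnegative (e.g.\ a Gaussian bump) and concentrated near a point where $\psi\, \varphi_\alpha$ has a definite sign, so that $\kappa := (2\pi)^{-d} \int \hat f(t)\, \psi(t)\, \varphi_\alpha(t)\, dt \neq 0$. Parseval gives
\begin{align*}
\int f\, d\mu_n^\alpha - \int f\, dS_\alpha(\nu_0) = \frac{1}{(2\pi)^d} \int \hat f(t)\, \bigl(\varphi_n(t) - \varphi_\alpha(t)\bigr)\, dt = \kappa\, \varepsilon_n + o(\varepsilon_n),
\end{align*}
so \eqref{e:KR-TV} yields $\dtv(\mu_n^\alpha, S_\alpha(\nu_0)) \geq c\, \varepsilon_n$ for all $n$ large. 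The main technical obstacle is justifying the $o(\varepsilon_n)$ remainder uniformly in $t$: on the bulk $|t| \leq \delta n^{1/\alpha}$ it follows from the Taylor expansion of $\log g$ paired with the envelope $|\varphi_n(t)| \leq e^{-c|t|^\alpha}$ (a consequence of $|g(s)| \leq 1 - c|s|^\alpha$ for small $|s|$); on the complement $|g(s)|$ is uniformly bounded away from $1$ on bounded $s$ and decays like $|s|^{-1}$ at infinity (inherited from the jump of the Pareto density at $|x|=1$), so $|\varphi_n(t)|$ is super-polynomially small and dominated convergence applies throughout.
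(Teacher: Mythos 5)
Your upper bound argument matches the paper's (one point worth tightening: once $\gamma$ exceeds $2-\alpha$, resp.\ $1$, resp.\ $1-\alpha$, the error terms in Theorem \ref{theorem1} are already of the claimed order, so you need $\gamma > 1$ rather than ``sufficiently large'', and the Pareto law gives $\epsilon \equiv 0$ on $r \geq 1$, hence any $\gamma$ works). For the lower bound you and the paper both go through characteristic functions, but the mechanics differ genuinely. You run a Parseval argument: expand $\varphi_n(t) - \varphi_\alpha(t) = \varepsilon_n\,\psi(t)\,\varphi_\alpha(t) + o(\varepsilon_n)$ uniformly on compacts, pair against $\hat f$ for a Schwartz $f$ with $\|f\|_\infty \leq 1$, and handle the high-frequency tail separately. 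The paper instead picks the \emph{single} bounded test function $f(x) = \cos\langle e_1, x\rangle$; since both laws are symmetric, $\E f(S_n) = \varphi_{S_n}(e_1)$ and $\E f(Y) = \varphi_Y(e_1)$, so the TV lower bound collapses to the scalar asymptotics of $\varphi_{S_n}(e_1) - \varphi_Y(e_1)$, which the paper computes directly by taking logarithms and applying L'H\^opital to $I(0) - I(n^{-1/\alpha})$. The paper's route avoids every technical burden you face: no uniformity in $t$, no envelope bound $|\varphi_n(t)| \leq e^{-c|t|^\alpha}$, no Riemann--Lebesgue decay estimate for the Pareto characteristic function at infinity, no dominated convergence. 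Your approach is more robust (it would still work if $\nu$ were not symmetric so that $\varphi$ were complex-valued, where a single cosine test function no longer suffices), but at the cost of substantially more analysis. One shared soft spot: at $\alpha = 1$ the two contributions to the leading coefficient ($|t|^2$ from $g$ and $|t|^{2\alpha}$ from $(\log g)^2$) have opposite signs and both live at order $n^{-1}$; you assert the sum is not identically zero by localizing $\hat f$, but never verify that $\psi \not\equiv 0$, and the paper likewise asserts rather than proves that its $\lim_n \Delta_n$ is nonzero in that case. Your remark about the log enhancement is off the point here --- the issue at $\alpha = 1$ is not a log but possible cancellation of the constant, which neither you nor the paper fully rules out in print.
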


Let us now sketch out the strategy for the proof of Theorem \ref{theorem1}. Let $Y_{1}, Y_{2}, \dots$ be $d$-dimensional i.i.d.\ random vectors with common distribution $S_{\alpha}(\nu)$. For $1 \leq m \leq n$, define linear operators $P_m$ and $Q_m$ with
\begin{align} \label{def:PQ}
    P_m f (x) = \E f \left( x + \frac{1}{n^{1 / \alpha}} \sum_{i=1}^{m} Y_i \right), \quad
    Q_m f (x) = \E f \left( x + \frac{1}{n^{1 / \alpha}\sigma} \sum_{i=1}^{m} \left( X_{i} - \omega_{n,\alpha} \right) \right), 
\end{align}
where $f \in \mathcal B_b(\mathbb R^d)$.  Thanks to \eqref{e:KR-TV},  it suffices to show that for all $f \in \mathcal B_b(\R^d)$, 
$$\left|P_n f(\mathbf{0})-Q_n f(\mathbf{0})\right| \le d_n \|f\|_\infty,$$
where $d_n$ is the rate defined in Theorem \ref{theorem1}. To this end, we decompose $P_n f(\mathbf{0})-Q_n f(\mathbf{0})$ as 
$$P_n f(\mathbf{0})-Q_n f(\mathbf{0})=\sum_{k=0}^{n-1} Q_{n-k-1} (Q_1-P_1) P_{k} f(\mathbf{0})=I_1+I_2,$$
 where $I_1=\sum_{k=0}^{n'} Q_{n-k-1} (Q_1-P_1) P_{k} f(\mathbf{0})$, $I_2=\sum_{k=n'+1}^{n-1} Q_{n-k-1} (Q_1-P_1) P_{k} f(\mathbf{0})$, and $n'$ is a large number. 
 The problem is now how to estimate the terms  $Q_{n-k-1} (Q_1-P_1) P_{k} f(\mathbf{0})$ in the sum, we call their bounds as \emph{one step estimates}. 
 $(Q_1-P_1)$ can be taken as a   {pseudo-}differential operator with an order less than four,   {so} bounding $P_n f(\mathbf{0})-Q_n f(\mathbf{0})$ is essentially a gradient estimate problem. 

\begin{figure}[htb]
    \centering
    \includegraphics[height=8cm]{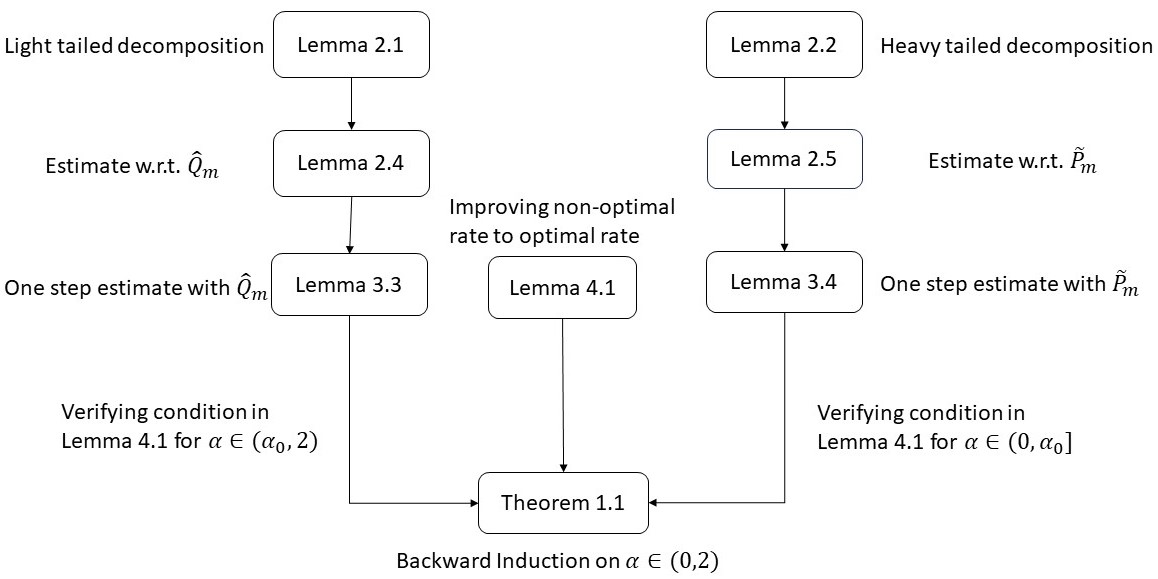}
    \caption{Roadmap for the proof of Theorem \ref{theorem1}}
    \label{fig:enter-label}
\end{figure}
 
Thanks to the smoothening effect of $P_k$ for a large $k$, $I_2$ is easy to handle. In order to bound the terms with a small $k$ in $I_1$, we need to dig out the smoothening effect of $Q_k$ by two measure decompositions, one is a light tailed decomposition and the other is a heavy tailed one. 
More precisely, the light tailed decomposition is borrowed from \cite{BC16}, in which one decomposes the random variable $X$ as a stochastic combination of $\whX$ and $U$ with $\whX$ having smooth density and compact support and $U$ being the remainder. Using this decomposition, we prove the crucial one step estimate for $\alpha \in (\alpha_0,2)$ with $\alpha_0$ being some constant larger than $1$. However, as $\alpha \in (0,\alpha_0]$, the light tailed decomposition does not work, we need to replace the above light tailed $\whX$ with a heavy tailed $\wtX$, which has heavy tailed behaviour like $\abs{x}^{-(d+\wtalpha)}$ with some $\wtalpha \in (\alpha, 2)$. 
Another key ingredient for bounding $I_1$ is the following identity:  
\begin{align*}
    Q_k (Q_1 - P_1) f= (Q_m - \whP_m)^{\otimes r} (Q_1 - P_1) Q_{k - rm} f + \sum_{j=0}^{r-1} (Q_m - \whP_m)^{\otimes j} Q_{k - (j+1) m} (Q_1 - P_1) \whP_m f,
\end{align*}    
where $\whP_m$ is defined by \eqref{e:whPm} {below}, and has the same smoothening effect as $P_m$. In contrast, Bally and Caramellino \cite{BC16} used Malliavin calculus to overcome the same problems. 


\vspace{8 pt}
\noindent
{\bf Acknowledgements}: We would like to gratefully thanks to Qi-Man Shao and Feng-Yu Wang for numerous very helpful comments and suggestions on early drafts.

\section{Measure decomposition and the related estimates} \label{sec2}
Recall the two family of operators $(P_m)_{1 \le m \le n}$ and $(Q_m)_{1 \le m \le n}$ defined in \eqref{def:PQ}. 
It is easy to see that $\norm{P_m f}_\infty \leq \norm{f}_\infty$, $\norm{Q_m f}_\infty \leq \norm{f}_\infty$ for $f \in \mathcal{B}_{b} (\R^d)$. Thanks to the independence, it is easy to verify that for $k, l \geq 1$,
\begin{align*}
    P_k P_l = P_{k+l}, \qquad Q_k Q_l = Q_{k+l}, \qquad P_k Q_l = Q_l P_k.
\end{align*}
Moreover, for any $f \in \mathcal{C}_{b}^1 (\R^d)$,
\begin{align*}
    \partial_i (P_k f) = P_k (\partial_i f), \qquad \partial_i (Q_l f) = Q_l (\partial_i f), \qquad i = 1, \dots, d.
\end{align*}
\vskip 2mm

\subsection{Measure decomposition} Under Assumption \ref{A:mX}, the distribution of $X$ may not be absolutely continuous with respect to the Lebesgue measure, which makes it difficult to estimate the gradient of $Q_m f$ directly. Inspired by the decomposition in \cite{BC16}, we consider the following decomposition of the law of $X$. 

\begin{lemma} \label{lemma:decom1}
    Suppose that the distribution of $X$ is locally lower bounded by the Lebesgue measure. Then there exists independent random variable $\chi$ and random vectors $\whX$, $U$ such that 
    
    \noindent (i) $(1 - \chi) \whX + \chi U$ possesses the same distribution as $X$,

    \noindent (ii) $\chi \sim B(1, p)$ for some $p \in (0,1)$, i.e., $\mathbb{P}\left(\chi=1 \right) = 1 - \mathbb{P}\left(\chi=0 \right) = p$,
    
    \noindent (iii) $\whX$ has the following density function
    \begin{align} \label{eq:pdf_tX}
        p_{\whX} (z) = c \eup^{- \frac{1}{\tau^2 - \abs{z-a}^2}} \mathbf{1}_{B (a, \tau)} (z), \qquad
        c = \left( \int_{B (a, \tau)} \eup^{- \frac{1}{\tau^2 - \abs{z-a}^2}} \, \dif  z \right)^{-1},
    \end{align}
    with some $\tau > 0$, $a \in \R^d$.
\end{lemma}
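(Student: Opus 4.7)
The plan is to carry out a Lebesgue-type decomposition of the law $\mu$ of $X$ that isolates a smooth, compactly supported component with precisely the prescribed bump density. The starting point is the local lower bound assumption, which provides $\e_0>0$ and an open set $D_0$ on which $\mu$ dominates $\e_0\,\Leb$. First I would pick $a\in D_0$ and $\tau>0$ small enough that the closed ball $\overline{B(a,\tau)}$ is contained in $D_0$; this is possible because $D_0$ is a nonempty open set (nonemptiness is automatic, since otherwise $\mu(\R^d)=0$).

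Next I would take as my candidate density $p_{\whX}(z)=c\,\eup^{-1/(\tau^2-|z-a|^2)}\mathbf{1}_{B(a,\tau)}(z)$ as in \eqref{eq:pdf_tX}; this is smooth, compactly supported in $B(a,\tau)$, and in particular bounded, so set $M=\|p_{\whX}\|_\infty<\infty$. The key quantitative observation is that for \emph{every} Borel set $E\subseteq\R^d$, the probability measure $\mu_{\whX}$ with density $p_{\whX}$ satisfies
\[
    \mu_{\whX}(E)=\int_{E\cap B(a,\tau)}p_{\whX}(z)\,\dif z\leq M\cdot\Leb(E\cap B(a,\tau))\leq \frac{M}{\e_0}\,\mu(E),
\]
the last inequality using $B(a,\tau)\subseteq D_0$ together with the local lower bound. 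Any choice of $p\in(0,1)$ with $1-p\leq \e_0/M$ therefore yields the dominance $(1-p)\mu_{\whX}(E)\leq\mu(E)$ on all Borel sets.

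Given this dominance, $\mu_U:=p^{-1}\bigl(\mu-(1-p)\mu_{\whX}\bigr)$ is a nonnegative measure of total mass $1$, hence a probability measure on $\R^d$. I would then take $\chi$, $\whX$, $U$ independent on a product probability space with respective laws $B(1,p)$, $\mu_{\whX}$, $\mu_U$, and set $Z=(1-\chi)\whX+\chi U$. Conditioning on $\chi$ gives
\[
    \PP(Z\in E)=(1-p)\mu_{\whX}(E)+p\,\mu_U(E)=\mu(E),\qquad \forall\,E\in\mathscr{B}(\R^d),
\]
which establishes (i), while (ii) and (iii) hold by construction.

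There is no serious obstacle here: the statement is essentially the tautology that a probability measure dominating $\e_0\,\Leb$ on an open set admits a Lebesgue-type splitting off of any density bounded by $\e_0$ and supported inside that set. The only mild subtlety is verifying the dominance $(1-p)\mu_{\whX}\leq\mu$ on \emph{every} Borel set, not merely on those contained in $B(a,\tau)$; but this immediately reduces to the latter case because $p_{\whX}$ vanishes outside $B(a,\tau)$, and the local lower bound then does exactly what is needed.
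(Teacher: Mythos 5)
Your proof is correct and follows the same route as the paper: bound the bump density from above (you use $M=\lVert p_{\whX}\rVert_\infty$, the paper uses the normalizing constant $c\geq M$), compare with the local lower bound to get $(1-p)\mu_{\whX}\leq\mu$, and define $U$ via the nonnegative remainder. The only cosmetic difference is that the paper fixes $p=1-\e_0 c^{-1}$ outright while you allow any admissible $p$; the substance is identical.
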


The proof of   {the} above lemma is postponed to Appendix. We shall use the smooth effect of $\whX$ to study the regularity property of $Q_m$.

As a consequence of   {the} above lemma, there exist independent Bernoulli random variables $\chi_i \sim B(1, p)$, $p \in (0,1)$ and random vectors $\whX_i$, $U_i$, $i = 1, \dots, m$ such that $(1 - \chi_i) \whX_i + \chi_i U_i$ possesses the same distribution as $X_i$, and the common density function of $\whX_i$ is displayed in \eqref{eq:pdf_tX}.

For $\omega_{n,\alpha}$ defined in \eqref{mu}, we denote
\begin{align*}
    \xi = \sum_{i = 1}^m (1 - \chi_i), \qquad
    U = \frac{1}{n^{1 / \alpha}\sigma} \sum_{i=1}^{m} \left( \chi_i U_i - \omega_{n,\alpha} \right),
\end{align*}
then it follows that
\begin{align*}
    Q_{m} f (x)
    &= \E f \left( x + \frac{1}{n^{1 / \alpha}\sigma} \sum_{i=1}^{m} (1 - \chi_i) \whX_i + \frac{1}{n^{1 / \alpha}\sigma} \sum_{i=1}^{m} \left( \chi_i U_i - \omega_{n,\alpha} \right) \right) \\
    &= \E f \left( x + \frac{1}{n^{1 / \alpha}\sigma} \sum_{i=1}^{\xi} \whX_i + U \right).
\end{align*}
Further denote
\begin{align*}
    \whQ_m f (x) = \E \left[ f \left( x + \frac{1}{n^{1 / \alpha}\sigma} \sum_{i=1}^{\xi} \whX_i + U \right) \mathbf{1}_{\{\xi \geq \frac{1-p}{2} m \}} \right].
\end{align*}

The measure decomposition in Lemma \ref{lemma:decom1} is not enough for deriving a gradient estimate for our induction. Alternatively, we derive another measure decomposition in the following lemma, which takes the heavy tail property into account. 
\begin{lemma} \label{lemma:decom2}
    Suppose that $X$ is a $d$-dimensional random vector satisfying Assumption \ref{A:mX} with parameters $\alpha \in (0, 2)$ and $\gamma > 0$, i.e.
    \begin{align*}
        \mathbb{P}\left(\abs{X} \ge r, \, \frac{X}{\abs{X}}\in B \right) = \int_{B} \frac{A+\epsilon(r,\theta)}{r^{\alpha}} \nu(\dif \theta), \quad \forall r > 0, \; B \in \mathscr{B}(\mathbb{S}^{d-1}),
    \end{align*}
    with $A$, $K > 0$ and $\epsilon \colon \mathbb{R_+}\times \mathbb{S}^{d-1}\to \mathbb{R}$ satisfying
    \begin{align*}
        \sup_{\theta\in \mathbb{S}^{d-1}} \abs{\epsilon(r,\theta)} \le K ( 1 \wedge r^{-\gamma} ).
    \end{align*}
    Then for any $\wtalpha \in (\alpha, 2)$, there exists independent Bernoulli random variable $\wtchi \sim B(1, q)$, $q \in (0, 1)$ and random vectors $\wtX$, $V$ such that $(1 - \wtchi) \wtX + \wtchi V$ possesses the same distribution as $X$, in which $\wtX$ satisfies Assumption \ref{A:mX} with parameters $\wtalpha$ and $\gamma$, to be concrete,
    \begin{align*}
        \mathbb{P} \left( \lvert \wtX \rvert \ge r, \, \frac{\wtX}{\lvert \wtX \rvert}\in B \right)=\int_{B} \frac{\wtA + \wtepsilon(r,\theta)}{r^{\wtalpha}} \nu(\dif \theta), \quad \forall r > 0, \; B \in \mathscr{B}(\mathbb{S}^{d-1}),
    \end{align*}
    for some $\wtA$, $\wtK > 0$ and $\wtepsilon \colon \mathbb{R_+} \times \mathbb{S}^{d-1}\to \mathbb{R}$ satisfying
    \begin{align*}
        \sup_{\theta\in \mathbb{S}^{d-1}} \abs{\wtepsilon(r,\theta)} \le \wtK ( 1 \wedge r^{-\gamma} ).
    \end{align*}
    Furthermore, the distribution of $\wtX$ is locally lower bounded by the Lebesgue measure if and only if   {so is} $X$.
\end{lemma}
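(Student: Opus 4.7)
My plan is to construct $\wtX$ as a radial re-weighting of $X$, which automatically yields $\PP_{\wtX} \ll \PP_X$ and the pointwise domination needed to define the residual $V$. Fix $R_0 \ge 1$ large enough that $D_0 \subset B(\mathbf{0}, R_0)$, where $D_0$ is the open set from the hypothesis $\PP_X \ge \epsilon_0 \Leb|_{D_0}$ (if $X$ is not locally lower bounded, any $R_0 \ge 1$ works). Set
\[
h(s) := 1 \wedge (R_0/s)^{\wtalpha - \alpha}, \qquad s>0, \qquad
\PP_{\wtX}(\dif x) := c\, h(\abs{x})\, \PP_X(\dif x),
\]
with $c := 1/\E[h(\abs{X})] \in (1, +\infty)$ (well-defined because $h \le 1$ while $\PP(\abs{X} > R_0) > 0$ by the tail assumption).

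The central task is to verify that $\wtX$ satisfies Assumption \ref{A:mX} with parameters $\wtalpha$ and $\gamma$. Write $\bar F(r, B) := \PP(\abs{X} \ge r, X/\abs{X} \in B)$, $g(r, \theta) := (A + \epsilon(r, \theta))/r^\alpha$, and let $\mu_\theta$ be the radial measure on $(0,+\infty)$ with $\mu_\theta((r, +\infty)) = g(r, \theta)$, so that $\PP_X(\dif x) = \mu_\theta(\dif r) \nu(\dif \theta)$ in polar coordinates. For $r \ge R_0$, substituting $h(s) = R_0^{\wtalpha - \alpha} s^{-(\wtalpha - \alpha)}$ on $[r, +\infty)$ and integrating by parts against $g$ gives
\begin{align*}
\widetilde F(r, B) &:= \PP(\abs{\wtX} \ge r, \wtX/\abs{\wtX} \in B) \\
&= c R_0^{\wtalpha - \alpha} \int_B \lt[r^{-(\wtalpha - \alpha)} g(r, \theta) - (\wtalpha - \alpha) \int_r^{+\infty} g(s, \theta) s^{-(\wtalpha - \alpha) - 1} \dif s\rt] \nu(\dif \theta),
\end{align*}
which rearranges to the required form $\int_B (\wtA + \wtepsilon(r, \theta))/r^{\wtalpha} \nu(\dif \theta)$ with $\wtA := c R_0^{\wtalpha - \alpha} A \alpha/\wtalpha > 0$ and $\abs{\wtepsilon(r, \theta)} \le \wtK r^{-\gamma}$ upon substituting $\abs{\epsilon(s, \theta)} \le K s^{-\gamma}$ in the remainder. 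For $r < R_0$, splitting the integral at $R_0$ yields $\widetilde F(r, B) = c \bar F(r, B) - C(B)$ with an $r$-independent $C(B) = \int_B c(\theta) \nu(\dif \theta)$; after rewriting in the required form, $\wtepsilon$ is uniformly bounded on $(0, R_0]$, hence dominated by $\wtK(1 \wedge r^{-\gamma})$ once $\wtK$ is enlarged by the factor $R_0^\gamma$ on $[1, R_0]$. The elementary bound $c(\theta) \le c g(R_0, \theta)$ combined with monotonicity of $g$ ensures $\widetilde F \ge 0$.

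Set $q := 1 - 1/c \in (0,1)$ and $\PP_V(\dif x) := q^{-1}(1 - h(\abs{x})) \PP_X(\dif x)$. The identity $(1-q) c = 1$ gives $\int (1 - h) \dif \PP_X = q$, so $\PP_V$ is a probability measure. With $\wtchi \sim B(1, q)$ independent of $\wtX$ and $V$, the mixture $(1-\wtchi)\wtX + \wtchi V$ has law
\[
(1-q) \PP_{\wtX} + q \PP_V = h(\abs{x}) \PP_X(\dif x) + (1 - h(\abs{x})) \PP_X(\dif x) = \PP_X,
\]
matching $X$ in distribution. For the final \emph{if and only if}, since $h \equiv 1$ on $D_0 \subset B(\mathbf{0}, R_0)$, the forward direction gives $\PP_{\wtX}(E) \ge c \int_{E \cap D_0} \PP_X(\dif x) \ge c \epsilon_0 \Leb(E \cap D_0)$; the converse follows from $\PP_X \ge (1-q) \PP_{\wtX}$.

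The main technical subtlety lies in the $r < R_0$ regime, where the tail of $\wtX$ picks up the $r$-independent correction $C(B)$ from the re-weighted contribution of $X$'s tail beyond $R_0$. Matching the resulting sum $c\bar F(r, B) - C(B)$ to the form $(\wtA + \wtepsilon(r, \theta))/r^{\wtalpha}$ is elementary algebra, but one must track the constants carefully to confirm $\wtA > 0$ and that $\wtepsilon$ remains bounded uniformly. Since all identities are derived at the measure level via integration by parts against the tail $g(r, \theta)$, no regularity beyond the measurability of $\epsilon$ supplied by the hypothesis is ever needed.
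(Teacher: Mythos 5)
Your construction is essentially identical to the paper's, which takes $R_0 = 1$: the same radial power-law reweighting of the law of $X$ by the density $|x|^{\alpha-\wtalpha} \wedge 1$, the same integration-by-parts verification that $\wtX$ has the required tail form, and the same two-sided density-ratio argument for preserving local lower boundedness. Your extra parameter $R_0 \geq 1$ and the resulting $R_0^\gamma$ inflation of $\wtK$ are superfluous: with $R_0 = 1$ the reweighting density is bounded below by $R^{\alpha-\wtalpha}$ on any ball $B(\mathbf{0}, R)$, which already yields the \emph{if and only if} without reference to $D_0$, and on $r \in (0,1)$ no enlargement of $\wtK$ is needed since $1 \wedge r^{-\gamma} = 1$ there.
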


The proof of   {the} above lemma is postponed to Appendix.

As a consequence of   {the} above lemma, there exist independent Bernoulli random variables $\wtchi_i \sim B(1, q)$, $q \in (0, 1)$ and random vectors $\wtX_i$, $V_i$, $i = 1, \dots, m$ such that $(1 - \wtchi_i) \wtX_i + \wtchi_i V_i$ possesses the same distribution as $X_i$, and $\wtX_i$ satisfies Assumption \ref{A:mX} with parameters $\wtalpha$ and $\gamma$.

Denote
\begin{align*}
    \wtsigma &= \left(\wtA \wtalpha \int_{\mathbb{R}} \frac{1-\cos y}{\abs{y}^{1+\wtalpha}} \, \dif  y\right)^{\frac{1}{\wtalpha}}, &
    \omega_{n,\wtalpha} &= \begin{cases}
        \E \wtX_1, &\wtalpha \in (1, 2); \\
        \E \left[\wtX_1 \mathbf{1}_{(0, \wtsigma n]} \left( \lvert \wtX_{1} \rvert \right) \right], &\wtalpha = 1; \\
        \mathbf{0}, &\wtalpha \in (0, 1),
    \end{cases} \\
    \zeta &= \sum_{i = 1}^m (1 - \wtchi_i), &
    V &= \frac{1}{n^{1 / \alpha}\sigma} \sum_{i=1}^{m} \left( \wtchi_i V_i - \omega_{n,\alpha} \right) + \frac{\omega_{\zeta,\wtalpha}}{n^{1 / \alpha} \sigma} \zeta.
\end{align*}

  Parallel to the semigroup $\widehat{Q}_m$ with respect to $\{\widehat{X}_k\}_{k \ge 1}$, it is natural to define a semigroup with respect to $\{\widetilde X_k\}_{k \ge 1}$. Since the distributions between $\frac{1}{\sqrt n} \sum_{k=1}^n \widetilde X_k$ and $\wtY \sim S_{\wtalpha} (\nu)$ are very close, we can avoid introducing this semigroup and use the one of $\wtY$ as a replacement.  More precisely, {let $\wtY \sim S_{\wtalpha} (\nu)$ independent of $\wtchi_i$, $V_i$, $i = 1, \dots, m$, we further denote
\begin{align} \label{e:whPm}
    \whP_m f (x) = \E \left[ f \left( x + \frac{\zeta^{1 / \wtalpha} \wtsigma}{n^{1 / \alpha} \sigma} \wtY + V \right) \mathbf{1}_{\{\zeta \geq \frac{1-q}{2} m \}} \right].
\end{align}}

\subsection{Related estimates} We derive the gradient estimates for $P_m f$, $\whQ_m f$ and $\whP_m f$, and give the error estimates of $Q_m f-\whP_m f$. 

\begin{lemma} \label{lemma:P}
    Let $P_{m}$ be defined in \eqref{def:PQ}, then
    \begin{align*}
        \norm{\nabla^{\kappa} P_{m}f}_{\op,\infty}\le C\norm{f}_{\infty} \left( \frac{m}{n} \right)^{-\frac{\kappa}{\alpha}}, \quad \forall f\in \mathcal{C}^{4}_{b}\left( \R^{d}\right), \; \kappa=1,\dots,4.
    \end{align*}
\end{lemma}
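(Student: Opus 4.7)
The plan is to reduce the estimate to a uniform $L^{1}$ bound on derivatives of the stable density, via self-similarity and convolution.

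First, since $Y_{1},\dots,Y_{m}$ are i.i.d.\ with $S_{\alpha}(\nu)$, the self-similarity of the strictly $\alpha$-stable law gives
\begin{align*}
    \frac{1}{n^{1/\alpha}}\sum_{i=1}^{m}Y_{i}\stackrel{d}{=}\Bigl(\frac{m}{n}\Bigr)^{1/\alpha} Y_{1}.
\end{align*}
Denote by $p_{\alpha,\nu}$ the density of $Y_{1}$ (whose existence and smoothness I will address at the end), and let $p_{m}(y)=(m/n)^{-d/\alpha}\,p_{\alpha,\nu}\bigl((n/m)^{1/\alpha}y\bigr)$ be the density of $(m/n)^{1/\alpha}Y_{1}$. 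Then for any $f\in\mathcal{C}_{b}^{4}(\mathbb{R}^{d})$,
\begin{align*}
    P_{m}f(x)=\int_{\mathbb{R}^{d}} f(x+y)\,p_{m}(y)\,\dif y=\int_{\mathbb{R}^{d}} f(u)\,p_{m}(u-x)\,\dif u.
\end{align*}

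Second, I would differentiate under the integral on the right-hand representation. Since all mixed partials of $p_{\alpha,\nu}$ up to order $4$ are integrable (shown below), for unit vectors $v_{1},\dots,v_{\kappa}$,
\begin{align*}
    \nabla_{v_{1}}\cdots\nabla_{v_{\kappa}}P_{m}f(x)=(-1)^{\kappa}\int_{\mathbb{R}^{d}} f(u)\,\bigl(\nabla_{v_{1}}\cdots\nabla_{v_{\kappa}}p_{m}\bigr)(u-x)\,\dif u,
\end{align*}
so that
\begin{align*}
    \bigl|\nabla_{v_{1}}\cdots\nabla_{v_{\kappa}}P_{m}f(x)\bigr|\le \|f\|_{\infty}\int_{\mathbb{R}^{d}}\bigl|\nabla_{v_{1}}\cdots\nabla_{v_{\kappa}}p_{m}(y)\bigr|\,\dif y.
\end{align*}
A direct scaling computation, chaining the factor $(n/m)^{1/\alpha}$ from each directional derivative and compensating by the Jacobian in the change of variable $z=(n/m)^{1/\alpha}y$, yields
\begin{align*}
    \int_{\mathbb{R}^{d}}\bigl|\nabla_{v_{1}}\cdots\nabla_{v_{\kappa}}p_{m}(y)\bigr|\,\dif y =\Bigl(\frac{m}{n}\Bigr)^{-\kappa/\alpha}\int_{\mathbb{R}^{d}}\bigl|\nabla_{v_{1}}\cdots\nabla_{v_{\kappa}}p_{\alpha,\nu}(z)\bigr|\,\dif z,
\end{align*}
which is the announced scaling. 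Taking the supremum over $x$ and over unit directions reduces the lemma to proving
\begin{align*}
    M_{\kappa}:=\sup_{|v_{1}|,\dots,|v_{\kappa}|\le 1}\int_{\mathbb{R}^{d}}\bigl|\nabla_{v_{1}}\cdots\nabla_{v_{\kappa}}p_{\alpha,\nu}(z)\bigr|\,\dif z<\infty,\qquad \kappa=1,\dots,4.
\end{align*}

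The main obstacle is this last regularity/integrability statement for the stable density, which is where Assumption \ref{A:nu} is crucial. I would obtain it via the Fourier inversion
\begin{align*}
    p_{\alpha,\nu}(z)=(2\pi)^{-d}\int_{\mathbb{R}^{d}} e^{-i\langle z,\lambda\rangle}\exp\Bigl(-\int_{\mathbb{S}^{d-1}}\psi_{\alpha}(\langle\lambda,\theta\rangle)\,\nu(\dif\theta)\Bigr)\dif\lambda,
\end{align*}
for which the key ingredient is the lower bound
\begin{align*}
    \mathrm{Re}\int_{\mathbb{S}^{d-1}}\psi_{\alpha}(\langle\lambda,\theta\rangle)\,\nu(\dif\theta)=\int_{\mathbb{S}^{d-1}}|\langle\lambda,\theta\rangle|^{\alpha}\,\nu(\dif\theta)\ge c_{0}|\lambda|^{\alpha},\qquad \lambda\in\mathbb{R}^{d},
\end{align*}
for some $c_{0}>0$. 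Under Assumption \ref{A:nu}(i) this is immediate from $\varrho\ge \varrho_{0}^{-1}$ and the fact that $\theta\mapsto|\langle\lambda/|\lambda|,\theta\rangle|^{\alpha}$ has a strictly positive surface integral. Under Assumption \ref{A:nu}(ii) it follows from the hypothesis $\nu(B_{\mathbb{S}^{d-1}}(\theta,R))\le cR^{\eta-1}$ with $\eta>d-\alpha$, which prevents $\nu$ from concentrating near any great sub-sphere $\{\theta\perp\lambda/|\lambda|\}$. Once $c_{0}|\lambda|^{\alpha}$ is established, the integrand in the Fourier inversion decays like $e^{-c_{0}|\lambda|^{\alpha}}$ at infinity, so $p_{\alpha,\nu}\in\mathcal{C}^{\infty}(\mathbb{R}^{d})$ and each derivative $\nabla^{\kappa}p_{\alpha,\nu}$ is bounded. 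The $L^{1}$ integrability for $\kappa\le 4$ is then obtained from the standard decay $|\nabla^{\kappa}p_{\alpha,\nu}(z)|\le C(1+|z|)^{-(d+\alpha)}$, proved by differentiating the Fourier representation and, for the spatial decay, integrating by parts in $\lambda$ a sufficient number of times, using again the lower bound on the real part of the symbol to control the derivatives of the exponential. Combining $M_{\kappa}<\infty$ with the scaling identity completes the proof.
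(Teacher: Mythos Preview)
Your strategy---self-similarity plus transferring derivatives onto the stable density and scaling---is the same idea as the paper's. The one substantive difference is that the paper only invokes the \emph{first-order} $L^1$ gradient bound $\int_{\R^d}|\nabla p_{\widehat Y_t}(y)|\,\dif y\le Ct^{-1/\alpha}$ (citing \cite{Bogdan2020,Chen2016}) and then bootstraps to $\kappa\ge 2$ via the semigroup splitting $\widehat Y_t\stackrel{d}{=}\widehat Y_{(\kappa-1)t/\kappa}+\widehat Y'_{t/\kappa}$, applying the first-order bound once per step:
\[
\|\nabla^\kappa \E f(\cdot+\widehat Y_t)\|_{\op,\infty}\le C(t/\kappa)^{-1/\alpha}\|\nabla^{\kappa-1}\E f(\cdot+\widehat Y_{(\kappa-1)t/\kappa})\|_{\op,\infty}\le\cdots\le C\|f\|_\infty t^{-\kappa/\alpha}.
\]
This is more economical than your route, which requires establishing $M_\kappa<\infty$ for each $\kappa\le 4$ separately.

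There is a gap in your Fourier argument under Assumption~\ref{A:nu}(ii). The claim that the upper bound $\nu(B_{\mathbb{S}^{d-1}}(\theta,R))\le cR^{\eta-1}$ ``prevents $\nu$ from concentrating near any great sub-sphere'' is false: take $d=3$, $\alpha\in(1,2)$, and $\nu$ the uniform probability measure on the equator of $\mathbb{S}^2$; then (ii) holds with $\eta=2>3-\alpha$, yet $\int_{\mathbb{S}^{d-1}}|\langle e_3,\theta\rangle|^\alpha\,\nu(\dif\theta)=0$, so the lower symbol bound $c_0|\lambda|^\alpha$ fails. The paper does not attempt a self-contained Fourier derivation here; it takes the first-order $L^1$ gradient estimate as a black box from \cite{Bogdan2020,Chen2016} and iterates. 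You should do the same. (Your sketch of the spatial decay $|\nabla^\kappa p_{\alpha,\nu}(z)|\le C(1+|z|)^{-(d+\alpha)}$ via integration by parts in $\lambda$ is also delicate, since the symbol is only of class $C^{\lfloor\alpha\rfloor}$ at $\lambda=0$; but once you cite the literature for $M_1<\infty$ and adopt the paper's iteration, that issue disappears.)
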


\begin{proof}
    Denote the density function of $\widehat{Y}_t$ by $p_{\widehat{Y}_t}$. According to \cite{Bogdan2020,Chen2016}, either of the two conditions in Assumption \ref{A:nu} implies that
    \begin{align*}
        \int_{\R^d} \abs{\nabla p_{\widehat{Y}_t} (y)} \dif  y \leq C t^{-\frac{1}{\alpha}}, \quad \forall t>0,
    \end{align*}
    so it follows from integration by parts that, for any $v \in \mathbb{R}^{d}$ and $|v|=1$, 
    \begin{align} \label{eq:lemmaPpr1}
        \abs{\E\left[ \nabla_{v} f(\widehat{Y}_{t})\right]}
        =\abs{ \int_{\mathbb{R}^{d}}\nabla_{v} f(y) p_{\widehat{Y}_t} (y)\dif y }
        =\abs{ \int_{\mathbb{R}^{d}} f(y) \nabla_{v}p_{\widehat{Y}_t} (y)\dif y }
        \le C t^{-\frac{1}{\alpha}} \norm{f}_{\infty}.
    \end{align}
    Let $\widehat{Y}_{t}^{\prime}$ be an independent copy of $\widehat{Y}_{t}$. It is clear that $\widehat{Y}_s + \widehat{Y}_t^{\prime}$ and $\widehat{Y}_{s+t}$ possess the same distribution, so the previous result derives that
    \begin{align*}
        \norm{\nabla \E f (\cdot + \widehat{Y}_{s+t})}_{\op,\infty}
        &= \sup_{x,v \in \mathbb{R}^{d}, |v|=1} \abs{\E \left[\nabla_{v} f (x + \widehat{Y}_{s+t}) \right]}= \sup_{x,v \in \mathbb{R}^{d}, |v|=1} \abs{\E \left[\nabla_{v} f (x + \widehat{Y}_s + \widehat{Y}_t^{\prime}) \right]}.
    \end{align*}
    For any fixed $x \in \R^d$ and $s \geq 0$, denote $\hat{f}_x (y) = \E f(x+\widehat{Y}_{s} + y)$, $y \in \R^d$, then \eqref{eq:lemmaPpr1} shows that
    \begin{align*}
        \abs{\E \left[\nabla_{v} f (x + \widehat{Y}_s + \widehat{Y}_t^{\prime}) \right]}
        = \abs{\E\left[ \nabla_{v} \hat{f}_x (\widehat{Y}_{t}')\right]}
        \leq C t^{-\frac{1}{\alpha}} \norm{\hat{f}_x}_\infty
        = C t^{-\frac{1}{\alpha}} \norm{\E f (\cdot + \widehat{Y}_s)}_{\infty}.
    \end{align*}
    So we have
    \begin{align*}
        \norm{\nabla \E f (\cdot + \widehat{Y}_{s+t})}_{\op,\infty}
        \leq C t^{-\frac{1}{\alpha}} \norm{\E f (\cdot + \widehat{Y}_s)}_{\infty}, \quad \forall s \geq 0, \; t > 0.
    \end{align*}
    For higher order gradients, applying above inequality repeatedly yields that, for $\kappa=1,\dots,4$, 
    \begin{align*}
        \norm{\nabla^{\kappa} \E \left[f(\cdot+\widehat{Y}_{t}) \right]}_{\op,\infty}
        \leq C \left( \frac{t}{\kappa} \right)^{-\frac{1}{\alpha}} \norm{\nabla^{\kappa-1} \E \left[f(\cdot+\widehat{Y}_{\frac{\kappa - 1}{\kappa} t}) \right]}_{\op,\infty} 
        \leq \cdots 
        \leq C \norm{f}_\infty t^{-\frac{\kappa}{\alpha}}.
    \end{align*}
    For the special case, notice that $\sum_{i=1}^m Y_i / n^{1 / \alpha}$ and $\widehat{Y}_{m / n}$ possess the same distribution. At the same time, by (\ref{def:PQ}), we have
    \begin{align*}
        \norm{\nabla^{\kappa} P_{m}f }_{\op,\infty}
        =\norm{\nabla^{\kappa} \E\left[ f\left(\cdot+ \frac{1}{n^{1 / \alpha}} \sum_{i=1}^m Y_i  \right)\right] }_{\op,\infty}
        =\norm{\nabla^{\kappa} \E\left[ f\left(\cdot+\widehat{Y}_{\frac{m}{n}} \right)\right] }_{\op,\infty},
    \end{align*}
    so the proof is finished by taking $t=m / n$.
\end{proof}

With the results of measure decomposition in Lemma \ref{lemma:decom1} and \ref{lemma:decom2}, we have the following gradient estimates of $Q_m f$.

\begin{lemma} \label{lemma:Q1}
    With above definitions, if the common distribution of $X_1, X_2, \dots$ is locally lower bounded by the Lebesgue measure, then there exist $C, C' > 0$ such that for any $f \in \mathcal{C}_{b}^4 (\R^d)$,
    \begin{align*}
        \norm{(Q_m - \whQ_m) f}_\infty \leq \norm{f}_\infty \eup^{- C' m}, \; \;
        \norm{\nabla^{\kappa} \whQ_m f}_{\op,\infty} \leq C \norm{f}_\infty n^{\frac{\kappa}{\alpha}} m^{-\frac{\kappa}{2}}, \; \; \kappa = 0, 1, \dots, 4.
    \end{align*}
\end{lemma}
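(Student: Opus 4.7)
The plan is to treat the two estimates separately: the sup-norm bound reduces to a large-deviation estimate on $\xi$, while the gradient estimate reduces, after integration by parts, to an $L^1$ bound on $\nabla^\kappa p_{\whX}^{*k}$ of local-CLT type.

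For the first estimate, observe directly from the construction of $\whQ_m$ that $(Q_m-\whQ_m)f(x)$ is the same expectation but restricted to the complementary event $\{\xi<(1-p)m/2\}$, so
\begin{align*}
|(Q_m-\whQ_m)f(x)|\le \|f\|_\infty\, \PP\bigl(\xi<(1-p)m/2\bigr).
\end{align*}
Since $\xi=\sum_{i=1}^m(1-\chi_i)$ is a sum of i.i.d.\ $B(1,1-p)$ variables with mean $(1-p)m$, Hoeffding's inequality yields $\PP(\xi<(1-p)m/2)\le \eup^{-C'm}$ for some $C'=C'(p)>0$.

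For the gradient bound, I would condition on the $\sigma$-algebra $\mathcal{G}$ generated by $(\chi_i,U_i)_{i=1}^m$, which determines both $\xi$ and $U$ but is independent of the $\whX_i$'s. Consequently $W:=\frac{1}{n^{1/\alpha}\sigma}\sum_{i:\chi_i=0}\whX_i$ has $\mathcal{G}$-conditional density $q_\xi(w)=(n^{1/\alpha}\sigma)^d p_{\whX}^{*\xi}(n^{1/\alpha}\sigma\, w)$, where $p_{\whX}^{*\xi}$ denotes the $\xi$-fold convolution of the smooth compactly supported bump in \eqref{eq:pdf_tX}, and
\begin{align*}
\whQ_m f(x)=\E\Bigl[\mathbf{1}_{\{\xi\ge(1-p)m/2\}}\int_{\R^d} f(x+w+U)\,q_\xi(w)\,\dif w\Bigr].
\end{align*}
Swapping $x$-derivatives for $w$-derivatives and integrating by parts $\kappa$ times (which is legal because $p_{\whX}\in C_c^\infty$) then yields
\begin{align*}
\norm{\nabla^\kappa \whQ_m f}_{\op,\infty}\le \|f\|_\infty\,(n^{1/\alpha}\sigma)^\kappa\, \E\bigl[\mathbf{1}_{\{\xi\ge(1-p)m/2\}}\,\|\nabla^\kappa p_{\whX}^{*\xi}\|_{L^1(\R^d)}\bigr].
\end{align*}

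The main obstacle is then the local-CLT estimate $\|\nabla^\kappa p_{\whX}^{*k}\|_{L^1(\R^d)}\le C\, k^{-\kappa/2}$ for $1\le\kappa\le 4$ and $k\ge 1$, which I plan to prove by Fourier inversion. Splitting $p_{\whX}^{*k}=p_{\whX}^{*\lfloor k/2\rfloor}*p_{\whX}^{*\lceil k/2\rceil}$, I would place all $\kappa$ derivatives on the first factor and use the bound $|\widehat{p_{\whX}}(t)|\le \eup^{-c_1|t|^2}\vee(1-c_2)$ --- valid because $\whX$ has a $C^\infty$ density with non-degenerate covariance --- to obtain $\|\nabla^\kappa p_{\whX}^{*\lfloor k/2\rfloor}\|_{L^2}\le C\,k^{-(d/4+\kappa/2)}$. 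Convolving with the second factor, whose mass is Gaussian-concentrated on scale $\sqrt{k}$ around $\lceil k/2\rceil\E\whX$ (a sub-Gaussian tail estimate for bounded i.i.d.\ sums), then yields the desired $L^1$ bound of order $k^{-\kappa/2}$ by Cauchy--Schwarz on the concentration region plus exponentially small remainder. Finally, since $\xi\ge(1-p)m/2$ forces $\xi^{-\kappa/2}\le C\,m^{-\kappa/2}$, feeding this into the gradient display above produces the claimed $C\|f\|_\infty n^{\kappa/\alpha}m^{-\kappa/2}$, completing the proof.
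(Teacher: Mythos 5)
Your treatment of the sup-norm bound is identical to the paper's (Hoeffding on $\xi=\sum(1-\chi_i)$, then drop the indicator), so nothing to add there.

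For the gradient bound you take a genuinely different route. The paper avoids Fourier analysis entirely: it fixes $l=\lfloor\frac{1-p}{8}m\rfloor$, peels off $\kappa$ blocks of $l$ copies of $\whX_i$ from the $\xi\ge\frac{1-p}{2}m\ge\kappa l$ terms, and reduces everything to the first-order operator $\olQ_l$. A single integration by parts is then done in a Stein/Malliavin spirit: $\partial_k\olQ_l f(x)=-\sigma n^{1/\alpha}\E[f(\cdot)h_k(\whX_j)]$ with the score $h_k=\partial_k p_{\whX}/p_{\whX}$, and since $\E h_k(\whX_j)=0$ and the $\whX_j$'s are independent, averaging over $j$ and applying Cauchy--Schwarz on $\E\bigl|l^{-1}\sum_j h_k(\whX_j)\bigr|$ gives the $l^{-1/2}$ gain from a variance computation alone. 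Iterating this $\kappa$ times (each $\olQ_l$ absorbs one directional derivative) produces $n^{\kappa/\alpha}m^{-\kappa/2}$. Your approach instead conditions on $(\chi_i,U_i)$, integrates by parts $\kappa$ times against the $\xi$-fold convolution density, and invokes the local-CLT estimate $\norm{\nabla^{\kappa}p_{\whX}^{*k}}_{L^1}\le Ck^{-\kappa/2}$. That estimate is correct and classical, but your sketch of it is incomplete in one place: after getting $\norm{\nabla^{\kappa}p_{\whX}^{*k}}_{L^2}\lesssim k^{-(d/4+\kappa/2)}$ from Plancherel, passing to $L^1$ by Cauchy--Schwarz on a ball of radius $O(\sqrt k)$ around $k\E\whX$ requires, in addition, that $\int_{|x-k\E\whX|>R\sqrt k}|\nabla^{\kappa}p_{\whX}^{*k}|$ be $O(k^{-\kappa/2})$; concentration of the (non-differentiated) factor $p_{\whX}^{*\lceil k/2\rceil}$ alone does not control this, because you must also bound the $L^1$ mass of $\nabla^{\kappa}p_{\whX}^{*\lfloor k/2\rfloor}$ far from its centre, which is exactly the quantity you are trying to estimate. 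Either redo the splitting more carefully (e.g.\ write $\nabla^\kappa p^{*k}$ with the $\kappa$ derivatives distributed over $\kappa$ disjoint single factors via the score trick, so the remaining $k-\kappa$ convolution factors carry all the concentration) or weight the Cauchy--Schwarz with $(1+|x-k\E\whX|^2)^{d/2+\varepsilon}$ and control the resulting moment. The paper's score-function argument sidesteps this tail analysis entirely, which is its main advantage; your Fourier route is more classical and generalizes, but needs that extra step filled in.
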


\begin{proof}
Hoeffding's inequality shows that
\begin{align*}
    \PP \left( \xi < \frac{1 - p}{2} m \right)
    = \PP \left( \sum_{i = 1}^m (\chi_i - p) > \frac{1-p}{2} m \right)
    \leq \eup^{- \frac{(1 - p)^2}{2} m},
\end{align*}
then for any $x \in \R^d$,
\begin{align*}
    \abs{(Q_m - \whQ_m) f (x)}
    = \abs{ \E \left[ f \left( x + \frac{1}{n^{1 / \alpha}\sigma} \sum_{i=1}^{\xi} \whX_i + U \right) \mathbf{1}_{\{\xi < \frac{1-p}{2} m \}} \right] }
    \leq \norm{f}_\infty \eup^{- \frac{(1 - p)^2}{2} m},
\end{align*}
and the first result follows.\par
For the second result, when $\kappa=0$, the definition of $\whQ_{m}f$ shows that $\lVert \whQ_m f \rVert_\infty \leq \norm{f}_\infty$. To estimate $\nabla^{\kappa} \whQ_m f$ for $\kappa = 1, \dots, 4$, we need some preparation. For $l = \left[ \frac{1 - p}{8} m \right] \geq 1$, denote
\begin{align*}
    \olQ_l f (x)
    &= \E f \left( x + \frac{1}{n^{1 / \alpha} \sigma} \sum_{i = 1}^l \whX_i \right) \\
    &= \int_{\R^d} \dots \int_{\R^d} f \left( x + \frac{1}{n^{1 / \alpha} \sigma} \sum_{i = 1}^l z^{(i)} \right) \prod_{i = 1}^l p_{\whX} (z^{(i)}) \, \dif  z^{(1)} \dots \dif  z^{(l)}.
\end{align*}
Recall that $p_{\whX}$ defined in \eqref{eq:pdf_tX} vanishes outside the ball $B (a, \tau)$, so it follows from integration by parts that
\begin{align*}
    \partial_k \olQ_l f (x)
    &= \int_{\R^d} \dots \int_{\R^d} \partial_k f \left( x + \frac{1}{n^{1 / \alpha} \sigma} \sum_{i = 1}^l z^{(i)} \right) \prod_{i = 1}^l p_{\whX} (z^{(i)}) \, \dif  z^{(1)} \dots \dif  z^{(l)} \\
    &= - \sigma n^{\frac{1}{\alpha}} \int_{\R^d} \dots \int_{\R^d} f \left( x + \frac{1}{n^{1 / \alpha} \sigma} \sum_{i = 1}^l z^{(i)} \right) \partial_k p_{\whX} (z^{(j)}) \prod_{i = 1, \, i \ne j}^l p_{\whX} (z^{(i)}) \, \dif  z^{(1)} \dots \dif  z^{(l)} \\
    &= - \sigma n^{\frac{1}{\alpha}} \E \left[ f \left( x + \frac{1}{n^{1 / \alpha} \sigma} \sum_{i = 1}^l \whX_i \right) h_k (\whX_j) \right],
\end{align*}
where $k = 1, \dots, d$, $j = 1, \dots, l$,
\begin{align*}
    h_k (z) = \frac{\partial_k p_{\whX} (z)}{p_{\whX} (z)} \mathbf{1}_{B (a, \tau)} (z)
    = - \frac{2 (z_k - a_k)}{\left( \tau^2 - \abs{z-a}^2 \right)^2} \mathbf{1}_{B (a, \tau)} (z), \quad z = (z_1, \dots, z_d) \in \R^d.
\end{align*}
It follows that
\begin{align*}
    \abs{\partial_k \olQ_l f (x)}
    &= \abs{- \sigma n^{\frac{1}{\alpha}} l^{-1} \E \left[ f \left( x + \frac{1}{n^{1 / \alpha} \sigma} \sum_{i = 1}^l \whX_i \right) \sum_{j = 1}^l h_k (\whX_j) \right] } \\
    &\leq \norm{f}_\infty \sigma n^{\frac{1}{\alpha}} l^{-1} \E \abs{ \sum_{j = 1}^l h_k (\whX_j) }
    \leq \norm{f}_\infty \sigma n^{\frac{1}{\alpha}} l^{-1} \sqrt{ \E \abs{ \sum_{j = 1}^l h_k (\whX_j) }^2 },
\end{align*}
for any $x \in \R^d$. Notice that
\begin{gather*}
    \E h_k (\whX_j)
    = - 2c \int_{B (a, \tau)} \frac{z_k - a_k}{\left( \tau^2 - \abs{z -a}^2 \right)^2} \eup^{- \frac{1}{\tau^2 - \lvert z -a \rvert^2}} \, \dif  z
    = 0, \\
    \E h_k (\whX_j)^2
    = 4c \int_{B (a, \tau)} \frac{(z_k - a_k)^2}{\left( \tau^2 - \abs{z -a}^2 \right)^4} \eup^{- \frac{1}{\tau^2 - \lvert z -a \rvert^2}} \, \dif z
    < + \infty,
\end{gather*}
and $\whX_1, \dots, \whX_l$ are independent, so for any $k = 1, \dots, d$,
\begin{align*}
    \norm{\partial_k \olQ_l f}_\infty
    &\leq \norm{f}_\infty \sigma n^{\frac{1}{\alpha}} l^{-1} \sqrt{ \E \sum_{j = 1}^l h_k (\whX_j)^2 + \E \sum_{i \ne j} h_k (\whX_i) h_k (\whX_j) } \\
    &= \norm{f}_\infty \sigma n^{\frac{1}{\alpha}} l^{-1} \sqrt{ l \E h_k (\whX_1)^2 } \leq C \norm{f}_\infty n^{\frac{1}{\alpha}} l^{-\frac{1}{2}}.
\end{align*}
Combining $l = \left[ \frac{1 - p}{8} m \right]$ we have
\begin{align} \label{eq:lemmaQ1pr1}
    \norm{\nabla \olQ_l f}_{\op,\infty}
    \leq C \norm{f}_\infty n^{\frac{1}{\alpha}} l^{-\frac{1}{2}}
    \leq C \norm{f}_\infty n^{\frac{1}{\alpha}} m^{-\frac{1}{2}}.
\end{align}

Now we are ready to estimate $\nabla^{\kappa} \whQ_m f$, $\kappa = 1, \dots, 4$. Recall $\kappa l = \kappa \left[ \frac{1 - p}{8} m \right] \leq \frac{1-p}{2} m$, for any  $x, v_{1},\dots,v_{\kappa} \in \mathbb{R}^{d}$, $\abs{v_{1}} = \dots = \abs{v_{\kappa}} = 1$,
\begin{align} \label{eq:lemmaQ1pr2}
    \begin{split}
    &\mathrel{\phantom{=}} \nabla_{v_{1}} \dots \nabla_{v_{\kappa}} \whQ_m f (x) \\
    &= \E \left[ \nabla_{v_{1}} \dots \nabla_{v_{\kappa}} f \left( x + \frac{1}{n^{1 / \alpha}\sigma} \sum_{i=1}^{\xi} \whX_i + U \right) \mathbf{1}_{\{\xi \geq \frac{1-p}{2} m \}} \right] \\
    &= \E \left[ \nabla_{v_{1}} \dots \nabla_{v_{\kappa}} f \left( x + \frac{1}{n^{1 / \alpha}\sigma} \sum_{i=1}^{\kappa l} \whX_i + \frac{1}{n^{1 / \alpha}\sigma} \sum_{i = \kappa l + 1}^{\xi} \whX_i + U \right) \mathbf{1}_{\{\xi \geq \frac{1-p}{2} m \}} \right] \\
    &= \E \left[ \olQ_l^{\otimes \kappa} (\nabla_{v_{1}} \dots \nabla_{v_{\kappa}} f) \left( x + \frac{1}{n^{1 / \alpha}\sigma} \sum_{i = \kappa l + 1}^{\xi} \whX_i + U \right) \mathbf{1}_{\{\xi \geq \frac{1-p}{2} m \}} \right],
    \end{split}
\end{align}
where in the last equality we use the independence between $(\xi, U)$ and $\whX_i$, $i = 1, \dots, m$. By the commutativity of $\olQ_l$ and derivative operator, \eqref{eq:lemmaQ1pr1} derives that
\begin{align*}
    \norm{\olQ_l^{\otimes \kappa} ( \nabla_{v_{1}} \dots \nabla_{v_{\kappa}} f )}_{\infty} 
    &= \norm{ \nabla_{v_{1}} \olQ_l \left[ \olQ_l^{\otimes (\kappa - 1)} ( \nabla_{v_{2}} \dots \nabla_{v_{\kappa}} f) \right]}_\infty \\
    &\leq \norm{ \nabla \olQ_l \left[ \olQ_l^{\otimes (\kappa - 1)} ( \nabla_{v_{2}} \dots \nabla_{v_{\kappa}} f) \right]}_{\op,\infty} \abs{v_1} \\
    &\leq C \norm{ \olQ_l^{\otimes (\kappa - 1)} ( \nabla_{v_{2}} \dots \nabla_{v_{\kappa}} f) }_\infty  n^{\frac{1}{\alpha}} m^{-\frac{1}{2}}.
\end{align*}
According to \eqref{eq:lemmaQ1pr2}, applying above inequality repeatedly implies
\begin{multline*}
    \abs{\nabla_{v_{1}} \dots \nabla_{v_{\kappa}} \whQ_m f (x)} 
    \leq \norm{\olQ_l^{\otimes \kappa} ( \nabla_{v_{1}} \dots \nabla_{v_{\kappa}} f )}_{\infty}
    \leq C \norm{ \olQ_l^{\otimes (\kappa - 1)} ( \nabla_{v_{2}} \dots \nabla_{v_{\kappa}} f) }_\infty  n^{\frac{1}{\alpha}} m^{-\frac{1}{2}} \\
    \leq \cdots \leq C \norm{f}_\infty n^{\frac{\kappa}{\alpha}} m^{-\frac{\kappa}{2}},
\end{multline*}
which shows the desired result
\begin{align*}
    \norm{\nabla^{\kappa} \whQ_m f}_{\op,\infty}
    &= \sup_{\substack{x, v_{1},\dots,v_{\kappa} \in \mathbb{R}^{d} \\ \abs{v_{1}} = \dots = \abs{v_{\kappa}} = 1}} \abs{\nabla_{v_{1}} \dots \nabla_{v_{\kappa}} \whQ_m f (x)}
    \leq C \norm{f}_\infty n^{\frac{\kappa}{\alpha}} m^{-\frac{\kappa}{2}}, \quad \kappa = 1, \dots, 4. \qedhere
\end{align*}
\end{proof}


The following lemma shows a better gradient estimate with the assumption \eqref{eq:inhyp} below. To be concrete, the gradient estimate $\lVert \nabla^{\kappa} \whQ_m f \rVert_{\op,\infty} \leq C \norm{f}_\infty n^{\frac{\kappa}{\alpha}} m^{-\frac{\kappa}{2}}$ in Lemma \ref{lemma:Q1} is replaced with $\lVert \nabla^{\kappa} \whP_m f \rVert_{\op,\infty} \leq C \norm{f}_\infty n^{\frac{\kappa}{\alpha}} m^{-\frac{\kappa}{\wtalpha}}$. This lemma paves a way for us to using backward induction on $\alpha$ in the proof of Theorem \ref{theorem1}.

\begin{lemma} \label{lemma:Q2}
    Assume the common distribution of $X_{1}, X_{2}, \dots$ is locally lower bounded by the Lebesgue measure, and satisfies Assumption \ref{A:mX} with parameters $\alpha \in (0, 2)$, $\gamma > 0$. Suppose that $\nu$ satisfies Assumption \ref{A:nu}. If
    \begin{align} \label{eq:inhyp}
        \dtv (\mu_n^{\wtalpha}, S_{\wtalpha} (\nu)) \leq c n^{- \delta}, \quad \forall n \geq 1,
    \end{align}
    holds for some $\wtalpha \in (\alpha, 2)$ and $c, \delta > 0$,   {then} there exists $C > 0$ such that for any $f \in \mathcal{C}_{b}^4 (\R^d)$,
    \begin{align*}
        \norm{(Q_m - \whP_m) f}_\infty \leq C \norm{f}_\infty m^{-\delta}, \; \;
        \norm{\nabla^{\kappa} \whP_m f}_{\op,\infty} \leq C \norm{f}_\infty n^{\frac{\kappa}{\alpha}} m^{-\frac{\kappa}{\wtalpha}}, \; \; \kappa = 0, 1, \dots, 4.
    \end{align*}
\end{lemma}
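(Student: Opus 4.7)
The plan is to mirror the proof of Lemma~\ref{lemma:Q1}, exploiting the heavy-tailed decomposition of Lemma~\ref{lemma:decom2} together with the bootstrap hypothesis \eqref{eq:inhyp}. First I would rewrite $Q_m f(x)$ using $X_i \stackrel{d}{=} (1-\wtchi_i)\wtX_i + \wtchi_i V_i$: conditionally on $(\wtchi_1,\ldots,\wtchi_m)$, the sum $\sum_{i=1}^m(1-\wtchi_i)\wtX_i$ is distributed as $\sum_{i=1}^\zeta \wtX_i$ (a sum of $\zeta$ i.i.d.\ copies of $\wtX$), so with $Z := \frac{1}{n^{1/\alpha}\sigma}\sum_{i=1}^m(\wtchi_i V_i - \omega_{n,\alpha})$ (noting $V = Z + \zeta\omega_{\zeta,\wtalpha}/(n^{1/\alpha}\sigma)$) one obtains
\begin{align*}
Q_m f(x) &= \E\left[f\left(x + Z + \frac{1}{n^{1/\alpha}\sigma}\sum_{i=1}^{\zeta}\wtX_i\right)\right], \\
\whP_m f(x) &= \E\left[f\left(x + Z + \frac{\zeta\omega_{\zeta,\wtalpha} + \zeta^{1/\wtalpha}\wtsigma\,\wtY}{n^{1/\alpha}\sigma}\right)\mathbf{1}_{\{\zeta \ge (1-q)m/2\}}\right].
\end{align*}

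For the $L^\infty$ bound on $(Q_m - \whP_m)f$, I would split according to the event $\{\zeta \ge (1-q)m/2\}$. On its complement, Hoeffding's inequality (as in Lemma~\ref{lemma:Q1}) bounds the contribution by $\|f\|_\infty \eup^{-Cm} \le C\|f\|_\infty m^{-\delta}$. On the good event, I condition on $(\zeta, Z, \wtchi_i, V_i, \wtY)$, so that the only remaining randomness on the $Q_m$ side is $W_\zeta := \frac{1}{\zeta^{1/\wtalpha}\wtsigma}\sum_{i=1}^{\zeta}(\wtX_i - \omega_{\zeta,\wtalpha}) \sim \mu_{\zeta}^{\wtalpha}$, which is independent of $\wtY \sim S_{\wtalpha}(\nu)$. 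The two expressions then differ only via $W_\zeta$ versus $\wtY$, and by the dual characterisation \eqref{e:KR-TV} combined with \eqref{eq:inhyp} the conditional difference is at most $c\zeta^{-\delta}\|f\|_\infty \le C m^{-\delta}\|f\|_\infty$; taking outer expectation yields the first claim.

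For the gradient estimate I would differentiate $\whP_m f$ directly under the expectation (legitimate because neither the indicator nor $V$ depends on $x$), reducing the task to bounding $\nabla^{\kappa}_x \E f(x + c\wtY)$ where $c := \zeta^{1/\wtalpha}\wtsigma/(n^{1/\alpha}\sigma)$ and $\wtY \sim S_{\wtalpha}(\nu)$. This is exactly the situation of Lemma~\ref{lemma:P} with $\alpha$ replaced by $\wtalpha$: since $\eta > d-\alpha > d-\wtalpha$, Assumption~\ref{A:nu} still applies at level $\wtalpha$, giving $\int|\nabla p_{\wtY}|\,\dif y \le C$ and hence $\int|\nabla p_{c\wtY}|\,\dif y \le C c^{-1}$. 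Using the self-similarity $c\wtY \stackrel{d}{=} c\kappa^{-1/\wtalpha}\sum_{j=1}^\kappa \wtY^{(j)}$ for i.i.d.\ copies of $\wtY$ and iterating integration by parts $\kappa$ times exactly as in Lemma~\ref{lemma:P} produces a factor $C c^{-\kappa}\|f\|_\infty$; on $\{\zeta \ge (1-q)m/2\}$ this is $\le C n^{\kappa/\alpha} m^{-\kappa/\wtalpha}\|f\|_\infty$, and taking outer expectation concludes the second claim.

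The main obstacle is essentially bookkeeping: one must verify that $W_\zeta$, conditional on $\zeta$, is independent of $\wtY$, $Z$, and the indicator so that \eqref{eq:inhyp} can be invoked uniformly in $\zeta$; and simultaneously align the cutoff $\zeta \ge (1-q)m/2$ so that $\zeta^{-\delta}$ (respectively $\zeta^{-\kappa/\wtalpha}$) is absorbed into $m^{-\delta}$ (respectively $m^{-\kappa/\wtalpha}$). Once the independence structure is cleanly isolated, both parts reduce to a one-line combination of Hoeffding's inequality with either the bootstrap hypothesis \eqref{eq:inhyp} or the $\wtalpha$-analogue of Lemma~\ref{lemma:P}.
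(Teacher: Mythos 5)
Your proposal is correct and follows essentially the same path as the paper: decompose via Lemma~\ref{lemma:decom2}, split on $\{\zeta \ge \frac{1-q}{2}m\}$ using Hoeffding, recognise the conditional law of $\wtS_\zeta$ (your $W_\zeta$) as $\mu_\zeta^{\wtalpha}$ so that \eqref{eq:inhyp} applies conditionally on $(\zeta,V)$, and invoke the $\wtalpha$-analogue of Lemma~\ref{lemma:P} for the gradient bound. Your explicit remark that $\eta > d-\alpha > d-\wtalpha$ guarantees Assumption~\ref{A:nu}(ii) still holds at level $\wtalpha$ is a point the paper leaves tacit, but it is not a different route.
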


\begin{proof}
    Since $X_i$ and $(1 - \wtchi_i) \wtX_i + \wtchi_i V_i$ have the same distribution,
\begin{align*}
    Q_{m} f (x)
    &= \E f \left( x + \frac{1}{n^{1 / \alpha}\sigma} \sum_{i=1}^{m} (1 - \wtchi_i) \wtX_i + \frac{1}{n^{1 / \alpha}\sigma} \sum_{i=1}^{m} \left( \wtchi_i V_i - \omega_{n,\alpha} \right) \right).
\end{align*}
Hoeffding's inequality shows that
\begin{align*}
    \PP (\zeta < \frac{1-q}{2} m)
    = \PP \left( \sum_{i = 1}^m (\wtchi_i - q) > \frac{1-q}{2} m \right)
    \leq \eup^{- \frac{(1 - q)^2}{2} m}
    \leq C m^{-\delta},
\end{align*}
which implies that
\begin{align} \label{eq:Qfpart1}
    \abs{ \E \left[ f \left( x + \frac{1}{n^{1 / \alpha}\sigma} \sum_{i=1}^{m} (1 - \wtchi_i) \wtX_i + \frac{1}{n^{1 / \alpha}\sigma} \sum_{i=1}^{m} \left( \wtchi_i V_i - \omega_{n,\alpha} \right) \right) \mathbf{1}_{\{\zeta < \frac{1-q}{2} m \}} \right] }
    \leq C \norm{f}_\infty m^{-\delta}.
\end{align}
On $\{\zeta \geq \frac{1-q}{2} m \}$, define
\begin{align*}
    \wtS_\zeta = \frac{1}{\zeta^{1 / \wtalpha} \wtsigma} \sum_{i = 1}^{\zeta} \left( \wtX_i - \omega_{\zeta,\wtalpha} \right).
\end{align*}
Because $\wtX_i$, $i = 1, \dots, m$ are independent identically distributed and independent of $\wtchi_i$, $V_i$, $i = 1, \dots, m$, we have
\begin{align} \label{eq:Qfpart2}
    \begin{split}
    &\mathrel{\phantom{=}} \E \left[ f \left( x + \frac{1}{n^{1 / \alpha}\sigma} \sum_{i=1}^{m} (1 - \wtchi_i) \wtX_i + \frac{1}{n^{1 / \alpha}\sigma} \sum_{i=1}^{m} \left( \wtchi_i V_i - \omega_{n,\alpha} \right) \right) \mathbf{1}_{\{\zeta \geq \frac{1-q}{2} m \}} \right] \\
    &= \E \left[ f \left( x + \frac{1}{n^{1 / \alpha}\sigma} \sum_{i=1}^{\zeta} \wtX_i + \frac{1}{n^{1 / \alpha}\sigma} \sum_{i=1}^{m} \left( \wtchi_i V_i - \omega_{n,\alpha} \right) \right) \mathbf{1}_{\{\zeta \geq \frac{1-q}{2} m \}} \right] \\
    &= \E \left[ f \left( x + \frac{\zeta^{1 / \wtalpha} \wtsigma}{n^{1 / \alpha} \sigma} \wtS_\zeta + V \right) \mathbf{1}_{\{\zeta \geq \frac{1-q}{2} m \}} \right].
    \end{split}
\end{align}
Combining \eqref{eq:Qfpart1}, \eqref{eq:Qfpart2}, and the definitions of $Q_m$, $\whP_m$, we have
\begin{align*}
    &\mathrel{\phantom{=}} \abs{(Q_m - \whP_m) f (x)} \\
    &\leq \abs{ \E \left\{ \left[ f \left( x + \frac{\zeta^{1 / \wtalpha} \wtsigma}{n^{1 / \alpha} \sigma} \wtS_\zeta + V \right) - f \left( x + \frac{\zeta^{1 / \wtalpha} \wtsigma}{n^{1 / \alpha} \sigma} \wtY + V \right) \right] \mathbf{1}_{\{\zeta \geq \frac{1-q}{2} m \}} \right\} } + C \norm{f}_\infty m^{-\delta} \\
    &\leq \E \abs{ \E \left[ f \left( x + \frac{\zeta^{1 / \wtalpha} \wtsigma}{n^{1 / \alpha} \sigma} \wtS_\zeta + V \right) - f \left( x + \frac{\zeta^{1 / \wtalpha} \wtsigma}{n^{1 / \alpha} \sigma} \wtY + V \right) \middle\vert (\zeta, V) \right] \mathbf{1}_{\{\zeta \geq \frac{1-q}{2} m \}} } + C \norm{f}_\infty m^{-\delta},
\end{align*}
Since $\wtX_i$, $i = 1, \dots, m$ are independent of $(\zeta, V)$, \eqref{eq:inhyp} shows that on $\{\zeta \geq \frac{1-q}{2} m \}$,
\begin{align*}
    \abs{ \E \left[ f \left( x + \frac{\zeta^{1 / \wtalpha} \wtsigma}{n^{1 / \alpha} \sigma} \wtS_\zeta + V \right) - f \left( x + \frac{\zeta^{1 / \wtalpha} \wtsigma}{n^{1 / \alpha} \sigma} \wtY + V \right) \middle\vert (\zeta, V) \right] }
    \leq C \norm{f}_\infty \zeta^{-\delta}, \text{ a.s.},
\end{align*}
so
\begin{align*}
    \abs{(Q_m - \whP_m) f (x)}
    \leq C \norm{f}_\infty \E \left[ \zeta^{-\delta} \mathbf{1}_{\{\zeta \geq \frac{1-q}{2} m \}} \right] + C \norm{f}_\infty m^{-\delta}
    \leq C \norm{f}_\infty m^{-\delta}.
\end{align*}
On the other hand, for $\kappa = 0, 1, \dots, 4$, Lemma \ref{lemma:P} derives that on $\{\zeta \geq \frac{1-q}{2} m \}$,
\begin{align*}
    \norm{ \E \left[ \nabla^{\kappa} f \left( \cdot + \frac{\zeta^{1 / \wtalpha} \wtsigma}{n^{1 / \alpha} \sigma} \wtY + V \right) \middle\vert (\zeta, V) \right] }_{\op,\infty}
    \leq C \norm{f}_\infty n^{\frac{\kappa}{\alpha}} \zeta^{-\frac{\kappa}{\wtalpha}}, \text{ a.s.}.
\end{align*}
As $(\zeta, V)$ and $\wtY$ are independent, we have
\begin{align*}
    \norm{\nabla^{\kappa} \whP_m f }_{\op,\infty}
    &=\norm{\nabla^{\kappa} \E \left[ f \left( \cdot + \frac{\zeta^{1 / \wtalpha} \wtsigma}{n^{1 / \alpha} \sigma} \wtY + V \right) \mathbf{1}_{\{\zeta \geq \frac{1-q}{2} m \}} \right]}_{\op,\infty}\\
    &\leq \E \abs{\norm{ \E \left[ \nabla^{\kappa} f \left( \cdot + \frac{\zeta^{1 / \wtalpha} \wtsigma}{n^{1 / \alpha} \sigma} \wtY + V \right) \middle\vert (\zeta, V) \right]}_{\op,\infty} \mathbf{1}_{\{\zeta \geq \frac{1-q}{2} m \}} } \\
    &\leq C \norm{f}_\infty n^{\frac{\kappa}{\alpha}} \E \left[ \zeta^{-\frac{\kappa}{\wtalpha}} \mathbf{1}_{\{\zeta \geq \frac{1-q}{2} m \}} \right] \\
    &\leq C \norm{f}_\infty n^{\frac{\kappa}{\alpha}} m^{-\frac{\kappa}{\wtalpha}}. \qedhere
\end{align*}
\end{proof}

\section{One step estimates}

In this section, we first estimate one step error $(Q_1 - P_1) f$ in Lemma \ref{lemma:one step}. Combining with the gradient estimates derived in   {the previous section}, Lemma \ref{lemma4} and \ref{lemma5} estimate $( Q_1 - P_1 ) P_k f$ and $( Q_1 - P_1 ) Q_k f$ respectively.

\begin{lemma}\label{lemma:one step}
    Suppose that the common distribution of $X_1, X_2, \dots$ satisfies Assumption \ref{A:mX} with parameters $\alpha \in (0, 2)$, $\gamma > 0$. There exists a constant $C$ such that $\forall n \geq 1$, $f \in \mathcal{C}_b^4 ( \mathbb{R}^d )$,
    
    \noindent (i) For $\alpha\in (1,2)$,
    \begin{align*}
        \norm{(Q_1 - P_1) f}_\infty
        \leq C \left( \norm{\nabla f}_{\op,\infty} + \norm{\nabla^{2} f}_{\op,\infty} \right) \left[ n^{- \frac{2}{\alpha}} + n^{- 1 -\frac{\gamma}{\alpha}} (\ln n)^{\mathbf{1}_{ \{ \gamma = 2 - \alpha \} }} \right].
    \end{align*}
    
    \noindent (ii) For $\alpha=1$ and $\nu$ satisfying $\int_{\mathbb{S}^{d-1}} \theta \nu ( \dif \theta ) = \mathbf{0}$,
    \begin{align*}
        \norm{(Q_1 - P_1) f}_\infty \leqslant C \left( \sum_{\kappa = 0}^4 \norm{\nabla^\kappa f}_{\op,\infty} \right) \left[ n^{-2} + n^{- 1 - \gamma} (\ln n)^{\mathbf{1}_{ \{ \gamma = 1 \} }} \right].
    \end{align*}

    \noindent (iii) For $\alpha\in (0,1)$,
    \begin{align*}
        \norm{(Q_1 - P_1) f}_\infty
        &\leq C \left( \sum_{\kappa = 0}^2 \norm{\nabla^\kappa f}_{\op,\infty} \right) \left[ n^{-2} + n^{- 1 -\frac{\gamma}{\alpha \lor (1 - \gamma)}} (\ln n)^{\mathbf{1}_{ \{ \gamma = 1 - \alpha \} }} \right] \\
        &\quad + C \norm{\nabla f}_{\op,\infty} n^{-\frac{1}{\alpha}} (\ln n)^{\mathbf{1}_{ \{ \gamma = 1 - \alpha \} }}.
    \end{align*}
    If the distribution of $X_1$ is symmetric in the sense that $\int_{\mathbb{S}^{d-1}} \theta \epsilon (r, \theta) \nu (\dif \theta) = \mathbf{0}$, $\forall r > 0$, we have
    \begin{align*}
        \norm{(Q_1 - P_1) f}_\infty
        \leq C \left( \sum_{\kappa = 0}^2 \norm{\nabla^\kappa f}_{\op,\infty} \right) \left[ n^{-2} + n^{- 1 -\frac{\gamma}{\alpha \lor (1 - \gamma)}} (\ln n)^{\mathbf{1}_{ \{ \gamma = 1 - \alpha \} }} \right].
    \end{align*}
\end{lemma}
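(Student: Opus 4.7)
The plan is to expand both $P_1 f(x) - f(x)$ and $Q_1 f(x) - f(x)$ around the common leading term $\frac{1}{n} \mathcal{L}^{\alpha,\nu} f(x)$ and then bound the remainders separately. For $P_1 f - f$, I would use the Dynkin identity $P_1 f(x) = f(x) + \int_0^{1/n} \E [\mathcal{L}^{\alpha,\nu} f(x+\widehat{Y}_s)] \dif s$, and a second iteration of the same identity to extract $\frac{1}{n}\mathcal{L}^{\alpha,\nu} f(x)$ explicitly with a remainder of the form $\int_0^{1/n}\int_0^s \E [(\mathcal{L}^{\alpha,\nu})^2 f(x+\widehat{Y}_u)] \dif u \dif s$. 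The remainder is controlled by $O(n^{-2})$ times the relevant sup norms of derivatives of $f$, using that $\mathcal{L}^{\alpha,\nu}$ is a pseudo-differential operator of order $\alpha$.

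For $Q_1 f - f$, I would use Assumption \ref{A:mX} together with integration by parts in the Stieltjes sense to write $\E [ g(|X_1|, X_1/|X_1|) ]$ as an integral against the disintegrated tail measure $m_\theta(\dif r)\nu(\dif\theta)$, whose leading behaviour is $\alpha A \, r^{-\alpha-1} \dif r$ with an $\epsilon$-correction. Taking $g(r,\theta) = f(x+r\theta/(\sigma n^{1/\alpha})) - f(x) - (\text{centering})$, the leading contribution, after rescaling and the identity $\sigma^{\alpha} d_\alpha = A\alpha$, coincides exactly with $\frac{1}{n}\mathcal{L}^{\alpha,\nu} f(x)$. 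The residual errors split into three types: \emph{(a)} the $\epsilon$-contribution, which reduces to integrals of $|\epsilon(r,\theta)|/r^{\alpha+1}$ weighted against a derivative of $f$; under $|\epsilon(r,\theta)| \leq K(1 \wedge r^{-\gamma})$ these yield the stated $n^{-1-\gamma/\alpha}$-type terms with a logarithmic factor appearing precisely at the critical exponent $\gamma = 2 - \alpha$, $\gamma = 1$, or $\gamma = 1 - \alpha$. \emph{(b)} A Taylor remainder from the bulk $\{|X_1| \leq \sigma n^{1/\alpha}\}$, giving $n^{-2/\alpha}$ when $\alpha \in (1,2)$ or $n^{-2}$ otherwise. \emph{(c)} For $\alpha \in (0,1)$ only, a mismatch between the integration range $(0, \infty)$ of $\mathcal{L}^{\alpha,\nu}$ and the effective lower cutoff seen by $X_1$: this produces a term proportional to $\langle \nabla f(x), \int_{\mathbb{S}^{d-1}} \theta \nu(\dif\theta)\rangle \cdot n^{-1/\alpha}$ (and a cousin proportional to $\int \theta \epsilon \nu$), which vanishes under the symmetry hypothesis $\int_{\mathbb{S}^{d-1}} \theta \epsilon(r,\theta) \nu(\dif\theta) = \mathbf{0}$.

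The main obstacle is the case $\alpha = 1$. The L\'evy--Khintchine cutoff $k_1(r) = \mathbf{1}_{(0,1]}(r)$ must, after the rescaling $r \mapsto \sigma n^{1/\alpha} r$, align precisely with the truncated centering $\omega_{n,1} = \E [X_1 \mathbf{1}_{(0,\sigma n]}(|X_1|)]$, and the hypothesis $\int_{\mathbb{S}^{d-1}} \theta \nu(\dif\theta) = \mathbf{0}$ is crucial to cancel logarithmic divergences of type $\int^{\sigma n} r \cdot r^{-2} \dif r$ that otherwise survive the matching. Achieving the sharp rate $n^{-2}$ (rather than $n^{-2}(\ln n)^2$) requires pushing the Taylor expansion of $f(x+Z) - f(x)$ to fourth order and exploiting symmetry to cancel odd moments, which is why $\sum_{\kappa = 0}^4 \|\nabla^\kappa f\|_{\op,\infty}$ appears in the $\alpha = 1$ bound. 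For $\alpha \in (0,1)$ in the non-symmetric case, the bound $n^{-1-\gamma/(\alpha \vee (1-\gamma))}$ emerges from optimizing a truncation radius between the Taylor region and the tail region, giving the case split based on $\gamma > 1 - \alpha$ versus $\gamma \leq 1 - \alpha$ according to whether the dominant error stems from the slow decay of $\epsilon$ or from the integration-range mismatch.
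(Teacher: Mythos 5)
Your high-level plan --- Dynkin for $P_1$, direct comparison of $Q_1$ against the generator --- is the same as the paper's, but several of the details you outline would not go through as stated, and they are precisely the details that carry the weight in this lemma.

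First, your description of the $\epsilon$-contribution as ``integrals of $|\epsilon(r,\theta)|/r^{\alpha+1}$'' implicitly decomposes the radial tail measure of $X$ into a leading density $\alpha A r^{-\alpha-1}\dif r$ plus a density $\epsilon$-correction. But differentiating $(A+\epsilon(r,\theta))/r^\alpha$ in $r$ produces a term $\partial_r\epsilon/r^\alpha$ that is not controlled by Assumption \ref{A:mX} ($\epsilon$ is only bounded measurable). The paper avoids this by introducing a reference law $\tilde X$ with exact power-law radial tail $F_{\tilde X}(r)=(1-A/r^\alpha)\mathbf 1_{[A^{1/\alpha},\infty)}(r)$ and integrating by parts against $\dif\!\bigl[F_{\theta,X}-F_{\tilde X}\bigr]$, which puts $\epsilon(r,\theta)/r^{\alpha}$ against $\nabla_{a\theta}f$ with no derivative of $\epsilon$ appearing. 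Your route needs a replacement for this step.

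Second, and most importantly, your account of the $\alpha\in(0,1)$ mismatch term (c) is wrong about which piece survives and which is killed by symmetry. You attribute the residual $n^{-1/\alpha}\|\nabla f\|$-term to $\langle\nabla f(x),\int_{\mathbb{S}^{d-1}}\theta\,\nu(\dif\theta)\rangle$ and claim the symmetry hypothesis $\int\theta\,\epsilon(r,\theta)\nu(\dif\theta)=\mathbf 0$ kills it. It does not: that hypothesis says nothing about $\int\theta\,\nu(\dif\theta)$, and indeed the lemma holds for non-symmetric $\nu$ when $\alpha\in(0,1)$. In the paper the $\int\theta\,\nu$-weighted first-order piece cancels \emph{exactly} against the small-$r$ compensation $\mathcal R=A\alpha\int_{\mathbb{S}^{d-1}}\nu(\dif\theta)\int_0^{A^{1/\alpha}}\frac{f(x+ar\theta)-f(x)}{r^{1+\alpha}}\dif r$ (using the scalar identity $\int_0^{A^{1/\alpha}}(1-A/r^\alpha)\dif r=-A\alpha\int_0^{A^{1/\alpha}}r^{-\alpha}\dif r$), leaving an $O(a^2\|\nabla^2 f\|)$ remainder. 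The surviving $O(a)$ term is $a\,\langle\nabla f(x),\int_{\mathbb{S}^{d-1}}\nu(\dif\theta)\int_0^{a^{-1}}\theta\,\epsilon(r,\theta)r^{-\alpha}\dif r\rangle$, and \emph{that} is the term the symmetry hypothesis removes. Without working out this cancellation you cannot obtain the stated bound.

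Third, for $\alpha\in(1,2)$ your double Dynkin iteration bounds the $P_1$-remainder by $n^{-2}\|(\mathcal L^{\alpha,\nu})^2 f\|_\infty$, which requires $f$ up to $\nabla^4 f$; the stated inequality (i) involves only $\|\nabla f\|_{\op,\infty}+\|\nabla^2 f\|_{\op,\infty}$ with rate $n^{-2/\alpha}$. The paper instead bounds $\int_0^{1/n}\E|\mathcal L^{\alpha,\nu}f(x+\widehat Y_s)-\mathcal L^{\alpha,\nu}f(x)|\,\dif s$ using the H\"older estimate $|\mathcal L^{\alpha,\nu}f(x)-\mathcal L^{\alpha,\nu}f(y)|\le C\|\nabla^2 f\|_{\op,\infty}|x-y|^{2-\alpha}$ together with $\E|\widehat Y_s|^{2-\alpha}\lesssim s^{(2-\alpha)/\alpha}$, which is what gives $n^{-2/\alpha}$ with only second-order control. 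Your version proves a different (and, for this case, incomparable) inequality, not the one claimed. Relatedly, your description of residual (b) as a Taylor remainder over $\{|X_1|\le\sigma n^{1/\alpha}\}$ giving $n^{-2/\alpha}$ for $\alpha\in(1,2)$ is not a naive second-order Taylor bound: $\E[|X_1|^2\mathbf 1_{\{|X_1|\le\sigma n^{1/\alpha}\}}]\asymp n^{(2-\alpha)/\alpha}$, so a blunt Taylor expansion gives only $n^{-1}$; the $n^{-2/\alpha}$ rate again requires the comparison with $\tilde X$ that confines the genuine Taylor expansion to the bounded region $r<A^{1/\alpha}$.
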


The proof of   {the} above lemma is postponed to Appendix. For simplicity, we denote
\begin{align} \label{corollary:one step}
    D (n,\alpha,\gamma, f)
    = \begin{cases}
    \left( \sum_{\kappa = 1}^2 \norm{\nabla^\kappa f}_{\op,\infty} \right) \left[ n^{- \frac{2}{\alpha}} + n^{- 1 -\frac{\gamma}{\alpha}} (\ln n)^{\mathbf{1}_{ \{ \gamma = 2 - \alpha \} }} \right], &\alpha \in (1, 2); \\
    \left( \sum_{\kappa = 0}^4 \norm{\nabla^\kappa f}_{\op,\infty} \right) \left[ n^{-2} + n^{- 1 - \gamma} (\ln n)^{\mathbf{1}_{ \{ \gamma = 1 \} }} \right], &\alpha = 1; \\
    \left( \sum_{\kappa = 0}^2 \norm{\nabla^\kappa f}_{\op,\infty} \right) \left[ n^{-2} + n^{- 1 -\frac{\gamma}{\alpha \lor (1 - \gamma)}} (\ln n)^{\mathbf{1}_{ \{ \gamma = 1 - \alpha \} }} \right] \\
    \phantom{XXXXXXXXXX} + \norm{\nabla f}_{\op,\infty} n^{-\frac{1}{\alpha}} (\ln n)^{\mathbf{1}_{ \{ \gamma = 1 - \alpha \} }}, &\alpha \in (0, 1).
    \end{cases}
\end{align}
Then Lemma \ref{lemma:one step} shows that
\begin{align*}
    \norm{(Q_1 - P_1) f}_\infty \leq C D (n,\alpha,\gamma, f), \quad \forall n \geq 1, \, f \in \mathcal{C}_b^4 ( \mathbb{R}^d ).
\end{align*}

\begin{lemma} \label{lemma4}
    Suppose that the common distribution of $X_1, X_2, \dots$ satisfies Assumption \ref{A:mX} with parameters $\alpha \in (0, 2)$, $\gamma > 0$, and $\nu$ satisfies Assumption \ref{A:nu}. Then there exists a constant $C > 0$ not depending on $n$ and $k$, such that for any $f \in \mathcal{C}_{b}^{4}(\R^d)$,
    \begin{align*}
        \norm{( Q_1 - P_1 ) P_k f}_\infty \leq C \norm{f}_\infty \Lambda_1 (k, n, \alpha, \gamma), \quad 1 \leq k\leq n,
    \end{align*}
    where 
    \begin{align*}
        \Lambda_1 (k, n, \alpha, \gamma) = \begin{cases}
            k^{-\frac{2}{\alpha}} n^{\frac{2}{\alpha} - \frac{(\alpha + \gamma) \land 2}{\alpha}} \ln n, &\alpha \in (1, 2); \\
            k^{-4} n^{3 - (\gamma \land 1)} \ln n, &\alpha = 1;\\
            k^{-\frac{2}{\alpha}} n^{\frac{2}{\alpha} - 1 - (\gamma \land 1)} \ln n + k^{-\frac{1}{\alpha}} \ln n, &\alpha \in (0, 1).
        \end{cases}
    \end{align*}
\end{lemma}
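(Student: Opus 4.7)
The plan is simple: apply Lemma \ref{lemma:one step} with $g = P_k f$, which yields $\norm{(Q_1 - P_1) P_k f}_\infty \leq C D(n, \alpha, \gamma, P_k f)$, and then substitute the gradient estimates from Lemma \ref{lemma:P}, namely $\norm{\nabla^\kappa P_k f}_{\op,\infty} \leq C \norm{f}_\infty (k/n)^{-\kappa/\alpha}$ for $1 \leq \kappa \leq 4$, together with the trivial bound $\norm{P_k f}_\infty \leq \norm{f}_\infty$. Since $k \leq n$, the factor $(k/n)^{-\kappa/\alpha}$ is nondecreasing in $\kappa$, so in each of the three regimes the sum of derivative norms appearing in $D$ is controlled up to a constant by its highest-order term. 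The remainder of the proof is exponent bookkeeping, carried out regime by regime.

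For $\alpha \in (1, 2)$, only $\kappa = 1, 2$ enter $D$, and the $\kappa = 2$ bound $\norm{\nabla^2 P_k f}_{\op,\infty} \leq C\norm{f}_\infty k^{-2/\alpha} n^{2/\alpha}$ dominates. Multiplying by the bracket $n^{-2/\alpha} + n^{-1 - \gamma/\alpha} (\ln n)^{\mathbf{1}_{\{\gamma = 2 - \alpha\}}}$ yields $k^{-2/\alpha} + k^{-2/\alpha} n^{(2 - \alpha - \gamma)/\alpha} (\ln n)^{\mathbf{1}_{\{\gamma = 2 - \alpha\}}}$, which is bounded by $C k^{-2/\alpha} n^{(2 - (\alpha + \gamma) \wedge 2)/\alpha} \ln n$, matching the claimed $\Lambda_1$. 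For $\alpha = 1$, derivatives up to order $4$ enter $D$ and the $\kappa = 4$ term dominates, giving $C \norm{f}_\infty k^{-4} n^4$; multiplying by $n^{-2} + n^{-1 - \gamma} (\ln n)^{\mathbf{1}_{\{\gamma = 1\}}}$ yields $k^{-4}(n^2 + n^{3 - \gamma}(\ln n)^{\mathbf{1}_{\{\gamma = 1\}}}) \leq C k^{-4} n^{3 - (\gamma \wedge 1)} \ln n$.

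For $\alpha \in (0, 1)$, the sum $\sum_{\kappa = 0}^2 \norm{\nabla^\kappa P_k f}_{\op,\infty}$ is dominated by its $\kappa = 2$ term $C \norm{f}_\infty k^{-2/\alpha} n^{2/\alpha}$, contributing a term bounded by $k^{-2/\alpha} (n^{2/\alpha - 2} + n^{2/\alpha - 1 - \gamma/(\alpha \vee (1-\gamma))} (\ln n)^{\mathbf{1}_{\{\gamma = 1 - \alpha\}}})$. The additional summand $\norm{\nabla g}_{\op,\infty} n^{-1/\alpha} (\ln n)^{\mathbf{1}_{\{\gamma = 1 - \alpha\}}}$ with $g = P_k f$ becomes $k^{-1/\alpha} n^{1/\alpha} \cdot n^{-1/\alpha} \cdot (\ln n)^{\mathbf{1}_{\{\gamma = 1 - \alpha\}}} = k^{-1/\alpha} (\ln n)^{\mathbf{1}_{\{\gamma = 1 - \alpha\}}} \leq k^{-1/\alpha} \ln n$. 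To reach the precise form of $\Lambda_1$, I would then use the elementary inequality $\gamma/(\alpha \vee (1 - \gamma)) \geq \gamma \wedge 1$, which holds because $\alpha \leq 1$ and $1 - \gamma \leq 1$; this allows replacing $n^{-\gamma/(\alpha \vee (1 - \gamma))}$ by the larger $n^{-(\gamma \wedge 1)}$, after which the $n^{-2}$ term is absorbed into $n^{-1-(\gamma \wedge 1)}$ since $\gamma \wedge 1 \leq 1$.

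The main obstacle is purely exponent bookkeeping: organizing the sharper quantities produced by Lemma \ref{lemma:one step} (in particular the fine dependence $\gamma/(\alpha \vee (1-\gamma))$ and the indicator exponents on $\ln n$) so that they telescope into the uniform form of $\Lambda_1$. There is no delicate analysis beyond this direct combination; every ingredient is already prepared by Lemmas \ref{lemma:P} and \ref{lemma:one step}.
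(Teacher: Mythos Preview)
Your proposal is correct and follows exactly the same approach as the paper: apply Lemma \ref{lemma:one step} to $P_k f$ and then insert the gradient bounds of Lemma \ref{lemma:P}. The paper's own proof simply states that ``straightforward calculations'' give $D(n,\alpha,\gamma,P_kf)\le C\|f\|_\infty\Lambda_1(k,n,\alpha,\gamma)$ without spelling them out, whereas you have carried out the exponent bookkeeping explicitly and correctly in each regime (including the key observation $\gamma/(\alpha\vee(1-\gamma))\ge\gamma\wedge 1$ for $\alpha\in(0,1)$).
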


\begin{proof}
    According to Lemma \ref{lemma:one step},
    \begin{align*}
        \norm{( Q_1 - P_1 ) P_k f}_\infty
        \leq C D (n, \alpha, \gamma, P_k f).
    \end{align*}
    By Lemma \ref{lemma:P}, we have
    \begin{align*}
        \norm{\nabla^{\kappa} P_{k}f}_{\op,\infty}\le C\norm{f}_{\infty} n^{\frac{\kappa}{\alpha}} k^{-\frac{\kappa}{\alpha}}, \quad \kappa= 0, 1, \dots, 4.
    \end{align*}
    So straightforward calculations derive that
    \begin{align*}
        D (n, \alpha, \gamma, P_k f)
        \leq C \norm{f}_{\infty} \Lambda_1 (k, n, \alpha, \gamma), \quad 1 \leq k \leq n,
    \end{align*}
    which completes the proof.
\end{proof}

\begin{lemma} \label{lemma5-0}
     Assume the common distribution of $X_1, X_2, \dots$ is locally lower bounded by the Lebesgue measure, and satisfies Assumption \ref{A:mX} with $\alpha \in (0, 2)$, $\gamma > 0$. Suppose that $\nu$ satisfies Assumption \ref{A:nu}. Then there exists a constant $C > 0$ not depending on $n$ and $k$, such that for any $f \in \mathcal{C}_{b}^{4}(\R^d)$,
    \begin{align*}
        \norm{( Q_1 - P_1 ) Q_k f}_\infty \leq C \norm{f}_\infty \Lambda_2 (n, \alpha, 2, \gamma), \quad \frac{n}{4}\leq k\leq n,
    \end{align*}
    where
    \begin{align} \label{eq:Lambda_2}
        \Lambda_2 (n, \alpha, \beta, \gamma) = \begin{cases}
            n^{\frac{2}{\alpha} - \frac{2}{\beta} - \frac{(\alpha + \gamma) \land 2}{\alpha}} \ln n, &\alpha \in (1, 2); \\
            n^{3 - \frac{4}{\beta} - (\gamma \land 1)} \ln n, &\alpha = 1; \\
            n^{\frac{2}{\alpha} - \frac{2}{\beta} - 1 - (\gamma \land 1)} \ln n + n^{- \frac{1}{\beta}} \ln n, &\alpha \in (0, 1).
        \end{cases}
    \end{align}
\end{lemma}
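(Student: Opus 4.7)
The plan is to exploit the smoothing effect of the truncated semigroup $\whQ_k$, which is unavailable from $Q_k$ directly. We decompose
\begin{align*}
(Q_1-P_1)Q_k f = (Q_1-P_1)(Q_k-\whQ_k)f + (Q_1-P_1)\whQ_k f.
\end{align*}
For the residual piece, the first bound in Lemma \ref{lemma:Q1} together with the trivial $\|(Q_1-P_1)g\|_\infty\le 2\|g\|_\infty$ gives
\begin{align*}
\|(Q_1-P_1)(Q_k-\whQ_k)f\|_\infty \le 2\|f\|_\infty \eup^{-C'k} \le 2\|f\|_\infty \eup^{-C'n/4},
\end{align*}
using $k\ge n/4$. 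This decays super-polynomially and is therefore absorbed into $\Lambda_2(n,\alpha,2,\gamma)$.

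The main contribution is $(Q_1-P_1)\whQ_k f$. The gradient bounds in Lemma \ref{lemma:Q1} ensure $\whQ_k f\in \mathcal{C}_b^4(\R^d)$, so Lemma \ref{lemma:one step}, in its compact reformulation \eqref{corollary:one step}, yields
\begin{align*}
\|(Q_1-P_1)\whQ_k f\|_\infty \le C\, D(n,\alpha,\gamma,\whQ_k f).
\end{align*}
Substituting into $D$ the bounds $\|\nabla^\kappa \whQ_k f\|_{\op,\infty}\le C\|f\|_\infty n^{\kappa/\alpha}k^{-\kappa/2}$ from Lemma \ref{lemma:Q1} and exploiting $k\ge n/4$ to replace $k^{-\kappa/2}$ by $Cn^{-\kappa/2}$ reduces the proof to an algebraic verification in each of the three regimes $\alpha\in(1,2)$, $\alpha=1$ and $\alpha\in(0,1)$.

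Concretely, for $\alpha\in(1,2)$ the dominant contribution in $D$ arises from $\|\nabla^2 \whQ_k f\|_{\op,\infty}\le C\|f\|_\infty n^{2/\alpha-1}$ paired against $[n^{-2/\alpha}+n^{-1-\gamma/\alpha}(\ln n)^{\mathbf{1}_{\{\gamma=2-\alpha\}}}]$, giving the exponent $2/\alpha-1-((\alpha+\gamma)\wedge 2)/\alpha$; for $\alpha=1$ it comes from $\|\nabla^4 \whQ_k f\|_{\op,\infty}\le C\|f\|_\infty n^{2}$ against $[n^{-2}+n^{-1-\gamma}(\ln n)^{\mathbf{1}_{\{\gamma=1\}}}]$, yielding the exponent $1-(\gamma\wedge 1)$; and for $\alpha\in(0,1)$ the extra additive term $C\|\nabla \whQ_k f\|_{\op,\infty}n^{-1/\alpha}(\ln n)^{\mathbf{1}_{\{\gamma=1-\alpha\}}}$ in $D$ produces the separate $n^{-1/2}\ln n$ summand visible in $\Lambda_2$, while the $\|\nabla^2\whQ_k f\|$-term recovers $n^{2/\alpha-2-(\gamma\wedge 1)}\ln n$. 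No genuinely hard analytical step appears here: the delicate work has already been front-loaded into Lemmas \ref{lemma:Q1} and \ref{lemma:one step}, and the only thing demanding care is the three-case bookkeeping matching the resulting exponents precisely with \eqref{eq:Lambda_2} at $\beta=2$.
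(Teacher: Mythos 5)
Your proposal is correct and takes essentially the same route as the paper: decompose $(Q_1-P_1)Q_kf$ via $Q_k=(Q_k-\whQ_k)+\whQ_k$, dispatch the first piece by the exponential bound from Lemma~\ref{lemma:Q1} together with the trivial $\|Q_1-P_1\|\le 2$ (the paper first commutes to $(Q_k-\whQ_k)(Q_1-P_1)f$, but the two orderings give the same estimate), and feed the gradient bounds $\|\nabla^\kappa\whQ_kf\|_{\op,\infty}\le C\|f\|_\infty n^{\kappa/\alpha-\kappa/2}$ into $D(n,\alpha,\gamma,\cdot)$ to produce $\Lambda_2(n,\alpha,2,\gamma)$ after the three-case bookkeeping.
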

\begin{proof}
For any $\frac{n}{4} \leq k \leq n$, Lemma \ref{lemma:Q1} shows that there exists an operator $\whQ_k$ such that
\begin{gather}
        \norm{(Q_k - \whQ_k) f}_\infty \leq \norm{f}_\infty \eup^{- c k}, \label{eq:QQ}\\
        \norm{\nabla^{\kappa} \whQ_k f}_{\op,\infty} \leq C \norm{f}_\infty n^{\frac{\kappa}{\alpha}} k^{-\frac{\kappa}{2}} \leq C \norm{f}_\infty n^{\frac{\kappa}{\alpha} - \frac{\kappa}{2}}, \label{eq:gradQ}
\end{gather}
for $\kappa = 0, 1, \dots, 4$. Combining Lemma \ref{lemma:one step}, (\ref{eq:QQ}) and the commutativity of $P_1$, $Q_k$ and $\whQ_k$ gives us
\begin{align} \label{eq:PQ1}
    \begin{split}
    &\mathrel{\phantom{=}} \norm{(Q_1 - P_1) Q_k f}_\infty \\
    &\leq \norm{(Q_1 - P_1) (Q_k - \whQ_k)  f}_\infty + \norm{(Q_1 - P_1) \whQ_k f}_\infty \\
    &\leq \norm{(Q_k - \whQ_k) (Q_1 - P_1) f}_\infty + C D (n, \alpha, \gamma, \whQ_k f) \\
    &\leq \norm{(Q_1 - P_1) f}_\infty \eup^{-c k} + C D (n, \alpha, \gamma, \whQ_k f).
    \end{split}
\end{align}
By definitions of $P_1$ and $Q_1$, it is clear that
\begin{align} \label{eq:PQ2}
    \norm{(Q_1 - P_1) f}_\infty \eup^{-c k}
    \leq 2 \norm{f}_\infty \eup^{-c k}
    \leq 2 \norm{f}_\infty \eup^{- \frac{c}{4} n}, \quad \frac{n}{4} \leq k \leq n.
\end{align}
On the other hand, (\ref{eq:gradQ}) implies that
\begin{align} \label{eq:PQ3}
    D (n, \alpha, \gamma, \whQ_k f) \leq C \norm{f}_\infty \Lambda_2 (n, \alpha, 2, \gamma), \quad \frac{n}{4} \leq k \leq n.
\end{align}
Combining \eqref{eq:PQ1}, \eqref{eq:PQ2} and \eqref{eq:PQ3} derives that for $\frac{n}{4} \leq k \leq n$,
\begin{align*}
    \norm{(Q_1 - P_1) Q_k f}_\infty
    &\leq 2 \norm{f}_\infty \eup^{- \frac{c}{4} n} + C \norm{f}_\infty \Lambda_2 (n, \alpha, 2, \gamma)
    \leq C \norm{f}_\infty \Lambda_2 (n, \alpha, 2, \gamma). \qedhere
\end{align*} 
\end{proof} 

\begin{lemma}  \label{lemma5}
Assume the same conditions as in Lemma \ref{lemma5-0}. 
    Furthermore, if
    \begin{align} \label{eq:inhyp0}
        \dtv (\mu_n^{\wtalpha}, S_{\wtalpha} (\nu)) \leq c n^{- \delta}, \quad \forall n \geq 1,
    \end{align}
    holds for some $\wtalpha \in (\alpha, 2)$ and $c, \delta > 0$,   {then} there exists a constant $C>0$ not depending on $n$ and $k$, such that for any $f \in \mathcal{C}_{b}^{4}(\R^d)$,
    \begin{align*}
        \norm{( Q_1 - P_1 ) Q_k f}_\infty \leq C \norm{f}_\infty \Lambda_2 (n, \alpha, \wtalpha, \gamma), \quad \frac{n}{2} \leq k \leq n, 
    \end{align*}
 where $ \Lambda_2$ is defined in \eqref{eq:Lambda_2}. 
\end{lemma}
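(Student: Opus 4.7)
My plan is to mimic the proof of Lemma \ref{lemma5-0}, but replace the light-tailed auxiliary operator $\whQ_k$ with the heavy-tailed $\whP_m$ from Lemma \ref{lemma:Q2}, whose sharper gradient bound $\norm{\nabla^\kappa \whP_m f}_{\op,\infty} \leq C \norm{f}_\infty n^{\kappa/\alpha} m^{-\kappa/\wtalpha}$ is precisely what produces the $\wtalpha$-dependent exponent in $\Lambda_2$. The price is that $\norm{(Q_m - \whP_m) g}_\infty \leq C m^{-\delta} \norm{g}_\infty$ is only polynomially small rather than exponentially small, so the naive one-step decomposition $(Q_1 - P_1) Q_k f = (Q_1 - P_1) (Q_k - \whP_k) f + (Q_1 - P_1) \whP_k f$ used in Lemma \ref{lemma5-0} is too weak here: its residual would contribute $O(n^{-\delta})$, which can exceed $\Lambda_2(n, \alpha, \wtalpha, \gamma)$ when $\delta$ is small. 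To overcome this I iterate the decomposition, following the identity flagged in the introduction.

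Fix a positive integer $r$ to be chosen at the end, put $m = \lfloor k/r \rfloor$ so that $m \asymp n$ and $k - rm \in [0, r)$ whenever $n/2 \leq k \leq n$, and iterate $Q_k = (Q_m - \whP_m) Q_{k-m} + \whP_m Q_{k-m}$ a total of $r$ times using fresh independent copies of the random variables $(\wtchi_i, V_i, \wtY)$ at each step. Since these copies are independent of the random variables defining $Q_1$ and $P_1$, all the operators in sight commute, and one obtains
\begin{align*}
    Q_k (Q_1 - P_1) f = (Q_m - \whP_m)^{\otimes r} (Q_1 - P_1) Q_{k-rm} f + \sum_{j=0}^{r-1} (Q_m - \whP_m)^{\otimes j} Q_{k-(j+1)m} (Q_1 - P_1) \whP_m f.
\end{align*}
For the residual term, iterating $\norm{(Q_m - \whP_m) g}_\infty \leq C m^{-\delta} \norm{g}_\infty$ together with $\norm{(Q_1 - P_1) h}_\infty \leq 2 \norm{h}_\infty$ gives a bound of order $(C m^{-\delta})^r \norm{f}_\infty \leq C_r \norm{f}_\infty n^{-r\delta}$. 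For each summand, applying the same bound $j$ times shows $\norm{(Q_m - \whP_m)^{\otimes j} h}_\infty \leq C_r \norm{h}_\infty$, $Q_{k-(j+1)m}$ is non-expansive on $\mathcal{B}_b(\R^d)$, and Lemma \ref{lemma:one step} delivers $\norm{(Q_1 - P_1) \whP_m f}_\infty \leq C D(n, \alpha, \gamma, \whP_m f)$. Substituting the gradient bounds from Lemma \ref{lemma:Q2} into $D$ and checking the three regimes for $\alpha$ case-by-case yields $D(n, \alpha, \gamma, \whP_m f) \leq C \norm{f}_\infty \Lambda_2(n, \alpha, \wtalpha, \gamma)$. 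Summing the contributions gives
\begin{align*}
    \norm{(Q_1 - P_1) Q_k f}_\infty \leq C_r \norm{f}_\infty \bigl[ n^{-r\delta} + \Lambda_2(n, \alpha, \wtalpha, \gamma) \bigr].
\end{align*}

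Since the exponent of $n$ appearing in $\Lambda_2(n, \alpha, \wtalpha, \gamma)$ is a fixed real number determined by $\alpha, \wtalpha, \gamma$, I choose the integer $r$ large enough (depending only on $\alpha, \wtalpha, \gamma, \delta$) so that $r\delta$ exceeds the absolute value of that exponent, forcing $n^{-r\delta} \leq \Lambda_2(n, \alpha, \wtalpha, \gamma)$ for all $n \geq 1$, and the stated bound follows. I expect the main obstacle to be the case-by-case verification that $D(n, \alpha, \gamma, \whP_m f) \leq C \norm{f}_\infty \Lambda_2(n, \alpha, \wtalpha, \gamma)$: one must track several competing contributions in $D$ (the $n^{-2/\alpha}$-type piece, the $n^{-1-\gamma/\alpha}$-type piece, and, for $\alpha \in (0,1)$, the additional $\norm{\nabla f}_{\op,\infty} n^{-1/\alpha} \ln n$ piece), and verify that, once multiplied by the gradient prefactors $n^{\kappa/\alpha - \kappa/\wtalpha}$ supplied by $\whP_m$, their maximum matches the definition of $\Lambda_2(n, \alpha, \wtalpha, \gamma)$ in each of the three regimes.
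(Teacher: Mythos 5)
Your proof is correct and follows essentially the same route as the paper: iterate the decomposition $Q_m = (Q_m - \whP_m) + \whP_m$ exactly $r$ times with fresh independent randomness, bound the remainder via the iterated $O(m^{-\delta})$ contraction, and bound the $r$ summands via Lemma \ref{lemma:one step} combined with the $\whP_m$-gradient estimates from Lemma \ref{lemma:Q2}. The only difference from the paper is in how the residual term $(Q_m-\whP_m)^{\otimes r}(Q_1-P_1)Q_{k-rm}f$ is handled: the paper takes $m=[n/(4r)]$ so that $k-rm\geq n/4$, allowing it to invoke Lemma \ref{lemma5-0} and get the sharper bound $\norm{(Q_1-P_1)Q_{k-rm}f}_\infty \leq C\norm{f}_\infty\Lambda_2(n,\alpha,2,\gamma)$, which combined with $n^{-r\delta}$ and $r\delta\geq 4/\alpha-2$ already lands inside $\Lambda_2(n,\alpha,\wtalpha,\gamma)$; you instead take $m=\lfloor k/r\rfloor$ and use the crude bound $\norm{(Q_1-P_1)Q_{k-rm}f}_\infty\leq 2\norm{f}_\infty$, then compensate by enlarging $r$ until $n^{-r\delta}$ is dominated by the power of $n$ in $\Lambda_2(n,\alpha,\wtalpha,\gamma)$. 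Both choices are legitimate since $r$ is a fixed parameter not depending on $n$ or $k$, so the difference is cosmetic rather than structural.
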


\begin{proof}
We apply Lemma \ref{lemma:Q2} to derive a better estimate. We denote $m = \left[ \frac{n}{4r} \right] \geq 1$ with a constant $r = \left[ \frac{1}{\delta} (\frac{4}{\alpha} - 2) \right] + 1$ not depending on $n$ and $k$. Notice that for $n < 4r$ we can choose $C$ large enough.

According to Lemma \ref{lemma:Q2}, there exists an operator $\whP_m$ such that
\begin{gather}
    \norm{(Q_m - \whP_m) f}_\infty \leq C \norm{f}_\infty m^{-\delta} \leq C \norm{f}_\infty n^{-\delta}, \label{lemma5pr1} \\
    \norm{\nabla^{\kappa} \whP_m f}_{\op,\infty} \leq C \norm{f}_\infty n^{\frac{\kappa}{\alpha}} m^{-\frac{\kappa}{\wtalpha}} \leq C \norm{f}_\infty n^{\frac{\kappa}{\alpha} - \frac{\kappa}{\wtalpha}}, \label{lemma5pr2}
\end{gather}
holds for $\kappa = 0, 1, \dots, 4$, where we use the facts that $m \geq \frac{n}{4r} - 1$ and $r$ does not depend on $n$. Applying $Q_m = (Q_m - \whP_m) + \whP_m$ repeatedly implies that
\begin{align} \label{lemma5pr3}
    \begin{split}
    &\mathrel{\phantom{=}} (Q_1 - P_1) Q_k f \\
    &= (Q_1 - P_1) (Q_m - \whP_m) Q_{k - m} f + (Q_1 - P_1) \whP_m Q_{k - m} f \\
    &= (Q_1 - P_1) (Q_m - \whP_m)^{\otimes 2} Q_{k - 2 m} f \\
    &\phantom{X} + (Q_1 - P_1) (Q_m - \whP_m) \whP_m Q_{k - 2 m} f + (Q_1 - P_1) \whP_m Q_{k - m} f \\
    &= (Q_1 - P_1) (Q_m - \whP_m)^{\otimes r} Q_{k - rm} f + \sum_{j=0}^{r-1} (Q_1 - P_1) (Q_m - \whP_m)^{\otimes j} \whP_m Q_{k - (j+1) m} f \\
    &= (Q_m - \whP_m)^{\otimes r} (Q_1 - P_1) Q_{k - rm} f + \sum_{j=0}^{r-1} (Q_m - \whP_m)^{\otimes j} Q_{k - (j+1) m} (Q_1 - P_1) \whP_m f,
    \end{split}
\end{align}
where the last equality follows from the commutativity of $P_1$, $\whP_m$ and $Q_l$, $l = 1, \dots, k$. Because $k - rm \geq \frac{n}{2} - r \left[ \frac{n}{4r} \right] \geq \frac{n}{4}$, it is shown in Lemma \ref{lemma5-0} that
\begin{align*}
    \norm{(Q_1 - P_1) Q_{k-rm} f}_\infty \leq C \norm{f}_\infty \Lambda_2 (n, \alpha, 2, \gamma).
\end{align*}
This, together with applying \eqref{lemma5pr1} repeatedly, derives that
\begin{align} \label{lemma5pr4}
    \begin{split}
    \norm{(Q_m - \whP_m)^{\otimes r} (Q_1 - P_1) Q_{k - rm} f}_\infty
    &\leq C n^{-r\delta} \norm{(Q_1 - P_1) Q_{k-rm} f}_\infty \\
    &\leq C \norm{f}_\infty n^{2 - \frac{4}{\alpha}} \Lambda_2 (n, \alpha, 2, \gamma),
    \end{split}
\end{align}
where in the last inequality we use $r\delta \geq \frac{4}{\alpha} - 2$. For $j = 0, 1, \dots, r-1$, Lemma \ref{lemma:one step} shows
\begin{align} \label{lemma5pr5}
    \norm{(Q_m - \whP_m)^{\otimes j} Q_{k - (j+1) m} (Q_1 - P_1) \whP_m f}_\infty
    \leq 2^j \norm{(Q_1 - P_1) \whP_m f}_\infty
    \leq C D(n, \alpha, \gamma, \whP_m f),
\end{align}
where $D (n,\alpha,\gamma, f )$ is defined in \eqref{corollary:one step}. It can be derived by \eqref{lemma5pr2} that
\begin{align} \label{lemma5pr6}
    D(n, \alpha, \gamma, \whP_m f) \leq C \norm{f}_\infty \Lambda_2 (n, \alpha, \wtalpha, \gamma).
\end{align}

Combining \eqref{lemma5pr3}, \eqref{lemma5pr4}, \eqref{lemma5pr5} and \eqref{lemma5pr6}, we have
\begin{align*}
    \norm{(Q_1 - P_1) Q_k f}_\infty 
    &\leq \norm{(Q_m - \whP_m)^{\otimes r} (Q_1 - P_1) Q_{k - rm} f}_\infty \\
    &\phantom{X} + \sum_{j=0}^{r-1} \norm{(Q_m - \whP_m)^{\otimes j} Q_{k - (j+1) m} (Q_1 - P_1) \whP_m f}_\infty \\
    &\leq C \norm{f}_\infty n^{2 - \frac{4}{\alpha}} \Lambda_2 (n, \alpha, 2, \gamma) + \sum_{j=0}^{r-1} C \norm{f}_\infty \Lambda_2 (n, \alpha, \wtalpha, \gamma).
\end{align*}
Notice that
\begin{align*}
    \Lambda_2 (n, \alpha, 2, \gamma) \leq n^{4 (\frac{1}{\wtalpha} - \frac{1}{2})} \Lambda_2 (n, \alpha, \wtalpha, \gamma),
\end{align*}
so
\begin{align*}
    n^{2 - \frac{4}{\alpha}} \Lambda_2 (n, \alpha, 2, \gamma) \leq n^{4 (\frac{1}{\wtalpha} - \frac{1}{\alpha})} \Lambda_2 (n, \alpha, \wtalpha, \gamma) \leq \Lambda_2 (n, \alpha, \wtalpha, \gamma).
\end{align*}
Since $r$ does not depend on $n$ and $k$, we get
\begin{align*}
    \norm{(Q_1 - P_1) Q_k f}_\infty &\leq C \norm{f}_\infty \Lambda_2 (n, \alpha, \wtalpha, \gamma). \qedhere
\end{align*}
\end{proof}

\section{Proof of Main Results} \label{sec3}

We denote $\Lambda (n, \alpha, \gamma)$ as the desired convergence rate in the main result, i.e.
\begin{align*}
    \Lambda (n, \alpha, \gamma) = \begin{cases}
        n^{- \frac{2 - \alpha}{\alpha}} + n^{-\frac{\gamma}{\alpha}} (\ln n)^{\mathbf{1}_{ \{ \gamma = 2 - \alpha \} }}, &\alpha \in (1, 2); \\
        n^{-1} + n^{- \gamma} (\ln n)^{\mathbf{1}_{ \{ \gamma = 1 \} }}, &\alpha = 1; \\
        n^{-1} + n^{-\rho_{\alpha, \gamma}} (\ln n)^{\mathbf{1}_{ \{ \gamma = 1 - \alpha \} }}, &\alpha \in (0, 1),
    \end{cases}
\end{align*}
where
\begin{align*}
    \rho_{\alpha, \gamma} = \begin{cases}
        \frac{1 - \alpha}{\alpha}, &\gamma \in (1 - \alpha, +\infty); \\
        \frac{\gamma}{1 - \gamma}, &\gamma \in (0, 1 - \alpha].
    \end{cases}
\end{align*}
Recall
\begin{align*}
    Q_m f (x) = \E f \left( x + \frac{1}{n^{1 / \alpha}\sigma} \sum_{i=1}^{m} \left( X_{i} - \omega_{n,\alpha} \right) \right), \qquad
    P_m f (x) = \E f \left( x + \frac{1}{n^{1 / \alpha}} \sum_{i=1}^{m} Y_i \right),
\end{align*}
where $f \in \mcl B_b(\R^d)$, we shall frequently use the following decomposition in this section: 
\begin{equation} \label{e:OneStepDec}
    \E f(S_{n}) - \E f(Y_{1})
    = Q_n f (\mathbf{0}) - P_n f (\mathbf{0})
    = \sum_{k = 0}^{n-1} Q_{n-k-1} (Q_1 - P_1) P_k f (\mathbf{0}).
\end{equation}

Lemma \ref{lemma:one step} shows that
\begin{align*}
    \norm{(Q_1 - P_1) f}_\infty \leq C \left( \sum_{\kappa = 0}^4 \norm{\nabla^{\kappa} f}_{\op,\infty} \right) n^{-1} \Lambda (n, \alpha, \gamma), \quad \forall n \geq 1, \; f \in \mathcal{C}_{b}^{4}(\R^d),
\end{align*}
in which $\rho_{\alpha, \gamma}$ can be replaced by $\frac{\gamma}{\alpha \lor (1 - \gamma)}$ if $X_1$ has a symmetric distribution. In order to prove the main result $\dtv (\mu_n^{\alpha}, S_\alpha (\nu) ) \leq C \Lambda (n, \alpha, \gamma)$, the following lemma shows that it suffices to prove $\dtv (\mu_n^{\alpha}, S_\alpha (\nu) ) \leq c n^{-\delta}$ for any $n \geq 1$ and some $c, \delta > 0$.

\begin{lemma} \label{lemma6}
    Assume the common distribution of $X_1, X_2, \dots$ is locally lower bounded by the Lebesgue measure, and satisfies Assumption \ref{A:mX} with $\alpha \in (0, 2)$, $\gamma > 0$. Suppose that $\nu$ satisfies Assumption \ref{A:nu} and
    \begin{align} \label{eq:lemma6hyp}
        \abs{\E f(S_{n}) - \E f(Y_1)} \leq c \norm{f}_\infty n^{-\delta}, \quad \forall n \geq 1, \; f \in \mathcal{C}_{b}^{4}(\R^d),
    \end{align}
    holds for some $c, \delta > 0$. Then there exists a constant $C > 0$ such that
    \begin{align} \label{eq:lemma6con1}
        \dtv (\mu_n^{\alpha}, S_\alpha (\nu) ) \leq C \Lambda (n, \alpha, \gamma), \quad \forall n \geq 1.
    \end{align}
    If the distribution of $X_1$ is symmetric in the sense that $\int_{\mathbb{S}^{d-1}} \theta \epsilon (r, \theta) \nu (\dif \theta) = \mathbf{0}$, $\forall r > 0$, then for $\alpha \in (0, 1)$,
    \begin{align} \label{eq:lemma6con2}
        \dtv (\mu_n^\alpha, S_\alpha (\nu)) \leqslant C \left[ n^{-1} + n^{-\frac{\gamma}{\alpha \lor (1 - \gamma)}} (\ln n)^{\mathbf{1}_{ \{ \gamma = 1 - \alpha \} }} \right], \quad \forall n \geq 1.
    \end{align}
\end{lemma}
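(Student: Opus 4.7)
Proof plan: I would work directly with $f \in \mcl B_b(\R^d)$ satisfying $\norm{f}_\infty \le 1$ and bound $\abs{(Q_n-P_n)f(\mathbf{0})}$, which by \eqref{e:KR-TV} delivers the TV distance. The gradient estimates of Lemmas \ref{lemma:P}, \ref{lemma:Q1}, \ref{lemma:Q2} remain valid for bounded measurable $f$ because the smoothness is supplied by the densities of $Y$ or of the auxiliary random variables $\whX$, $\wtX$, so no separate step is needed to pass from the $\mathcal C_b^4$-duality to genuine TV. From the telescoping identity \eqref{e:OneStepDec} I would split the sum at $k = \lfloor n/2 \rfloor$ into $I_1 = \sum_{k<n/2}$ and $I_2 = \sum_{k \ge n/2}$. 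For $I_2$, Lemma \ref{lemma4} termwise gives $\abs{Q_{n-k-1}(Q_1-P_1)P_k f(\mathbf{0})} \le C\norm{f}_\infty \Lambda_1(k, n, \alpha, \gamma)$, and an elementary summation in each of the three regimes recovers $\Lambda(n,\alpha,\gamma)$.

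For $I_1$ I would commute to $(Q_1-P_1)Q_{n-k-1}P_k f$ and replay the proof of Lemma \ref{lemma5} with the formal identification $\wtalpha = \alpha$. With this choice $\whP_m$ collapses to $P_m$ so its gradient estimate reduces to Lemma \ref{lemma:P}, while $\norm{(Q_m-\whP_m)g}_\infty = \norm{(Q_m-P_m)g}_\infty \le C\norm{g}_\infty m^{-\delta}$ for $g \in \mathcal C_b^4$ is precisely the hypothesis \eqref{eq:lemma6hyp} after rescaling the $n$-normalization of $Q_m$ to the $m$-normalization of $S_m$ (lossless when $\alpha \ne 1$; for $\alpha = 1$ the centering discrepancy $\omega_{n,1}-\omega_{m,1}$ is absorbed as a constant translation). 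The Lemma \ref{lemma5} decomposition
\[
(Q_1-P_1)Q_{n-k-1}g = (Q_m-P_m)^{\otimes r}(Q_1-P_1)Q_{n-k-1-rm}g + \sum_{j=0}^{r-1}(Q_m-P_m)^{\otimes j}Q_{n-k-1-(j+1)m}(Q_1-P_1)P_m g
\]
with $r \ge (4/\alpha-2)/\delta$ and $m = \lfloor n/(4r) \rfloor$ then yields $\norm{(Q_1-P_1)Q_{n-k-1}P_k f}_\infty \le C\norm{f}_\infty \Lambda_2(n,\alpha,\alpha,\gamma)$ uniformly for $k < n/2$, and summation produces $\abs{I_1} \le Cn\,\Lambda_2(n,\alpha,\alpha,\gamma)\norm{f}_\infty$. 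A regime-by-regime check confirms $n\,\Lambda_2(n,\alpha,\alpha,\gamma) \le C\Lambda(n,\alpha,\gamma)$ in the cases $\alpha \in (1,2)$, $\alpha = 1$, and $\alpha \in (0,1)$ with $\gamma > 1-\alpha$.

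The main obstacle is the subregime $\alpha \in (0,1)$ with $\gamma \le 1-\alpha$: the $(\gamma \wedge 1)$ exponent that the definition of $\Lambda_2$ carries is too crude to recover the sharp $\gamma/(1-\gamma)$ target. In this regime one must bypass the relaxed $\Lambda_2$ and use the tighter Lemma \ref{lemma:one step}(iii) exponent $\gamma/(\alpha \lor (1-\gamma))$ directly, with a further subdivision of the $k$-range at the interpolation scale $k \sim n^{1/[\alpha(1-\gamma)]}$ to balance the smoothing of $P_k$ (contributing $(n/k)^{1/\alpha}$ to gradient bounds) against the residual $n^{-1/\alpha}$ term, so that both sub-sums fit under $\Lambda(n,\alpha,\gamma)$. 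Under the symmetric assumption the $\norm{\nabla f}_{\op,\infty}n^{-1/\alpha}$ contribution in Lemma \ref{lemma:one step}(iii) vanishes entirely, and the same scheme directly yields the improved exponent $\gamma/(\alpha \lor (1-\gamma))$ claimed in \eqref{eq:lemma6con2}.
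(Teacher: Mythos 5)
Your plan correctly identifies the two-part split of the sum and the idea of reusing the iterated decomposition of Lemma~\ref{lemma5} with the hypothesis \eqref{eq:lemma6hyp} playing the role of the inductive bound, but the central estimate you rely on is false and the operator you want to substitute for $\whP_m$ is not the one the paper actually uses. Concretely: the rates $\Lambda_1$ and $\Lambda_2$ defined in Lemmas~\ref{lemma4} and~\ref{lemma5-0} carry a $\ln n$ factor in \emph{every} regime, so they are deliberately loose bounds good only for establishing some algebraic rate $n^{-\delta}$ for the induction. For $\alpha\in(1,2)$, setting $\beta=\alpha$ gives $n\,\Lambda_2(n,\alpha,\alpha,\gamma)=n^{-\frac{\gamma\wedge(2-\alpha)}{\alpha}}\ln n$, whereas $\Lambda(n,\alpha,\gamma)=n^{-\frac{2-\alpha}{\alpha}}+n^{-\frac{\gamma}{\alpha}}(\ln n)^{\mathbf{1}_{\{\gamma=2-\alpha\}}}$; thus $n\,\Lambda_2(n,\alpha,\alpha,\gamma)\le C\Lambda(n,\alpha,\gamma)$ fails except at the exceptional value $\gamma=2-\alpha$, and the same spurious logarithm ruins your $\alpha=1$ and $\alpha\in(0,1)$ cases as well. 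The paper avoids this entirely: in Lemma~\ref{lemma6} it never invokes $\Lambda_1$, $\Lambda_2$, Lemma~\ref{lemma4}, or Lemma~\ref{lemma5}. For $k\geq m$ (with $m\sim n$) it uses that $\|\nabla^{\kappa}P_k f\|_{\op,\infty}$ is $O(\|f\|_\infty)$ with no $n$-dependence, so $D(n,\alpha,\gamma,P_kf)\le Cn^{-1}\Lambda(n,\alpha,\gamma)$ exactly, and summing over the $O(n)$ terms gives $\Lambda$ with no extra $\ln n$. For $k<m$, the key inequality is \eqref{eq:lemma6pr2}, $\|Q_l(Q_1-P_1)f\|_\infty\le C\|f\|_\infty n^{\frac{4}{\alpha}-3}\Lambda(n,\alpha,\gamma)$, obtained from Lemma~\ref{lemma:Q1} gradient bounds on $\whQ_l$; the iterated decomposition then gains $n^{-r\delta}$ with $r\delta\ge\frac{4}{\alpha}-2$ to again reach $n^{-1}\Lambda$ per term.

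The second gap: the paper does not set $\wtalpha=\alpha$ in $\whP_m$. Lemma~\ref{lemma:decom2} requires $\wtalpha\in(\alpha,2)$ strictly, and $\whP_m$ of \eqref{e:whPm} involves $\wtY\sim S_{\wtalpha}(\nu)$ and a Bernoulli truncation; it does not degenerate to $P_m$. Instead, Lemma~\ref{lemma6} introduces a genuinely new operator $\hP_m f(x)=P_m f\bigl(x+\frac{\omega_{m,\alpha}-\omega_{n,\alpha}}{n^{1/\alpha}\sigma}\bigr)$, a shifted copy of $P_m$, and shows $\|(Q_m-\hP_m)f\|_\infty\le C\|f\|_\infty n^{-\delta}$ by rewriting $(Q_m-\hP_m)f(x)$ as $\E\bar{f}_x(S_m)-\E\bar{f}_x(Y_1)$ for a translated and dilated $\bar f_x$ and applying \eqref{eq:lemma6hyp} at level $m$; the centering mismatch between the $n$-normalization of $Q_m$ and the $m$-normalization of $S_m$ is exactly what the shift absorbs, in all three cases $\alpha<1$, $\alpha=1$, $\alpha>1$. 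Finally, your remark that the $\mathcal C_b^4$-to-$\mathcal B_b$ approximation step is unnecessary is not safe: the estimates in Lemmas~\ref{lemma:P} and \ref{lemma:Q1} are stated for $f\in\mathcal C_b^4$, and some terms (notably $k=0$) involve $(Q_1-P_1)$ hitting $f$ without prior smoothing, so the density argument via Folland is genuinely used and not cosmetic.
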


\begin{proof}
We only prove the general conclusion \eqref{eq:lemma6con1}. On condition that $X_1$ has a symmetric distribution, \eqref{eq:lemma6con2} follows from a completely analogous proof with $\rho_{\alpha, \gamma}$ in $\Lambda (n, \alpha, \gamma)$ replaced by $\frac{\gamma}{\alpha \lor (1 - \gamma)}$.

By Kantorovich-Rubinstein theorem \cite{Villani2003}, \eqref{eq:lemma6con1} is equivalent to
\begin{align*}
    \abs{\E f(S_{n}) - \E f(Y_1)} \leq C \norm{f}_\infty \Lambda (n, \alpha, \gamma), \quad \forall n \geq 1, \; f \in \mathcal{B}_{b} (\R^d).
\end{align*}
According to \cite[Theorem 7.10, 8.14]{Folland1999}, any $f \in \mathcal{B}_{b} (\R^d)$ can be approximated almost everywhere by a sequence of $f_i \in \mathcal{C}_{b}^{4}(\R^d)$ satisfying $\norm{f_i}_\infty \leq 2 \norm{f}_\infty$, so it is enough to show
\begin{align*}
    \abs{\E f(S_{n}) - \E f(Y_1)} \leq C \norm{f}_\infty \Lambda (n, \alpha, \gamma), \quad \forall n \geq 1, \; f \in \mathcal{C}_{b}^{4}(\R^d).
\end{align*}
Recall \eqref{e:OneStepDec}, to prove the lemma, we need to bound $Q_{n-k-1} (Q_1 - P_1) P_k f$ according to $k \ge m$ and $k<m$ with $m = \left[ \frac{n}{4r} \right] \geq 1$ and $r = \left[ \frac{1}{\delta} (\frac{4}{\alpha} - 2) \right] + 1$. 

\noindent (i) For $k \ge m$, Lemma \ref{lemma:one step} shows that
\begin{align*}
    \norm{(Q_1 - P_1) P_k f}_\infty \leq C \left( \sum_{\kappa = 0}^4 \norm{\nabla^{\kappa} P_k f}_{\op,\infty} \right) n^{-1} \Lambda (n, \alpha, \gamma).
\end{align*}
According to Lemma \ref{lemma:P},
\begin{align*}
    \norm{\nabla^{\kappa} P_k f}_{\op,\infty}
    \leq C \norm{f}_\infty n^{\frac{\kappa}{\alpha}} k^{-\frac{\kappa}{\alpha}}
    \leq C \norm{f}_\infty n^{\frac{\kappa}{\alpha}} \left[ \frac{n}{4r} \right]^{- \frac{\kappa}{\alpha}}
    \leq C \norm{f}_\infty,
\end{align*}
so we get
\begin{align} \label{eq:lemma6pr1}
    \norm{Q_{n-k-1} (Q_1 - P_1) P_k f}_\infty
    \leq \norm{(Q_1 - P_1) P_k f}_\infty \leq C \norm{f}_\infty n^{-1} \Lambda (n, \alpha, \gamma), \quad k \geq m.
\end{align}

\noindent (ii) For $k < m$, we shall use the assumption \eqref{eq:lemma6hyp} and need some preparations. Denote 
\begin{equation*}
\hP_m f (x) = P_m f \left( x + \frac{\omega_{m,\alpha} - \omega_{n,\alpha}}{n^{1 / \alpha}\sigma} \right),
\end{equation*}  
then we have
\begin{align} \label{eq:lemma6pr7}
    \norm{\hP_m (Q_1 - P_1) f}_\infty
    = \norm{P_m (Q_1 - P_1) f}_\infty
    = \norm{(Q_1 - P_1) P_m f}_\infty
    \leq C \norm{f}_\infty n^{-1} \Lambda (n, \alpha, \gamma).
\end{align}
Furthermore, for any fixed $x \in \R^d$,
\begin{align*}
    &\mathrel{\phantom{=}} \abs{(Q_m - \hP_m) f (x)} \\
    &= \abs{\E f \left( x + \frac{1}{n^{1 / \alpha}\sigma} \sum_{i=1}^m \left( X_{i} - \omega_{n,\alpha} \right) \right) - \E f \left( x + \frac{\omega_{m,\alpha} - \omega_{n,\alpha}}{n^{1 / \alpha}\sigma} + \frac{1}{n^{1 / \alpha}} \sum_{i=1}^m Y_i \right)} \\
    &= \abs{\E f \left( x + \frac{\omega_{m,\alpha} - \omega_{n,\alpha}}{n^{1 / \alpha}\sigma} + \frac{m^{1 / \alpha}}{n^{1 / \alpha}} S_m \right) - \E f \left( x + \frac{\omega_{m,\alpha} - \omega_{n,\alpha}}{n^{1 / \alpha}\sigma} + \frac{m^{1 / \alpha}}{n^{1 / \alpha}} Y_1 \right)} \\
    &= \abs{\E \bar{f}_x (S_m) - \E \bar{f}_x (Y_1)},
\end{align*}
where $\bar{f}_x (y) = f ( x + \frac{\omega_{m,\alpha} - \omega_{n,\alpha}}{n^{1 / \alpha}\sigma} + \frac{m^{1 / \alpha}}{n^{1 / \alpha}} y )$. According to \eqref{eq:lemma6hyp},
\begin{align*}
    \abs{ \E \bar{f}_x (S_m) - \E \bar{f}_x (Y_1) }
    \leq C \norm{\bar{f}_x}_\infty m^{-\delta}
    = C \norm{f}_\infty m^{-\delta}
    \leq C \norm{f}_\infty n^{-\delta},
\end{align*}
so it follows that
\begin{align} \label{eq:lemma6pr3}
    \norm{(Q_m - \hP_m) f}_\infty
    = \sup_{x \in \R^d} \abs{(Q_m - \hP_m) f (x)}
    \leq C \norm{f}_\infty n^{-\delta}.
\end{align}

Now we are ready to estimate $Q_{n-k-1} (Q_1 - P_1) P_k f$ for $k < m$. Analogous to the operator decomposition in \eqref{lemma5pr3}, applying $Q_m = (Q_m - \hP_m) + \hP_m$ repeatedly implies
\begin{align} \label{eq:lemma6pr4}
    \begin{split}
    Q_{n-k-1} (Q_1 - P_1) P_k f 
    &= (Q_m - \hP_m)^{\otimes r} Q_{n-k-1-rm} (Q_1 - P_1) P_k f \\
    &\quad + \sum_{j=0}^{r-1} (Q_m - \hP_m)^{\otimes j} \hP_m Q_{n-k-1-(j+1)m} (Q_1 - P_1) P_k f.
    \end{split}
\end{align}
Notice that $n - k - 1 - rm \geq \frac{n}{4} \geq m$, by \eqref{eq:lemma6pr3} and the following estimation (to be proved below)
\begin{align} \label{eq:lemma6pr2}
    \begin{split}
    &\mathrel{\phantom{=}} \norm{Q_l (Q_1 - P_1) f}_\infty \leq C \norm{f}_\infty n^{\frac{4}{\alpha} - 3} \Lambda (n, \alpha, \gamma), \ \ \ m \le l \le n,
    \end{split}
\end{align}
 we have
\begin{align} \label{eq:lemma6pr5}
    \begin{split}
    &\mathrel{\phantom{=}} \norm{(Q_m - \hP_m)^{\otimes r} Q_{n- rm -k-1} (Q_1 - P_1) P_k f}_\infty \\
    &\leq C \norm{Q_{n- rm -k-1} (Q_1 - P_1) P_k f}_\infty n^{-r \delta} \leq C \norm{P_k f}_\infty n^{-r \delta + \frac{4}{\alpha} - 3} \Lambda (n, \alpha, \gamma) \\
    &\leq C \norm{f}_\infty n^{-r \delta + \frac{4}{\alpha} - 3} \Lambda (n, \alpha, \gamma) \leq C \norm{f}_\infty n^{- 1} \Lambda (n, \alpha, \gamma),
    \end{split}
\end{align}
where we use the fact that $r \delta \geq \frac{4}{\alpha} - 2$ in the last inequality.
 According to \eqref{eq:lemma6pr7}, for $j = 0, 1, \dots, r -1$,
\begin{align} \label{eq:lemma6pr6}
    \begin{split}
    &\mathrel{\phantom{=}} \norm{ (Q_m - \hP_m)^{\otimes j} \hP_m Q_{n- (j+1) m -k-1} (Q_1 - P_1) P_k f}_\infty \\
    &= \norm{ (Q_m - \hP_m)^{\otimes j} Q_{n- (j+1) m -k-1} P_k \hP_m (Q_1 - P_1) f}_\infty \\
    &\leq 2^j \norm{\hP_m (Q_1 - P_1) f }_\infty \leq C \norm{f}_\infty n^{- 1} \Lambda (n, \alpha, \gamma).
    \end{split}
\end{align}
Combining \eqref{eq:lemma6pr4}, \eqref{eq:lemma6pr5} and \eqref{eq:lemma6pr6}, we get the following estimate for $k < m$,
\begin{align} \label{eq:lemma6pr8}
    \begin{split}
    \norm{Q_{n-k-1} (Q_1 - P_1) P_k f}_\infty
    &\leq C \norm{f}_\infty n^{- 1} \Lambda (n, \alpha, \gamma) + \sum_{j=0}^{r-1} C \norm{f}_\infty n^{- 1} \Lambda (n, \alpha, \gamma) \\
    &\leq C \norm{f}_\infty n^{- 1} \Lambda (n, \alpha, \gamma).
    \end{split}
\end{align}
Then desired result follows from \eqref{e:OneStepDec}, \eqref{eq:lemma6pr1}, and \eqref{eq:lemma6pr8}.

It remains to prove \eqref{eq:lemma6pr2}.
According to Lemma \ref{lemma:Q1}, there exist operators $\whQ_l$, $m \leq l \leq n$ such that
\begin{align*}
        \norm{(Q_l - \whQ_l) f}_\infty \leq \norm{f}_\infty \eup^{- C' l}, \quad
        \norm{\nabla^{\kappa} \whQ_l f}_{\op,\infty} \leq C \norm{f}_\infty n^{\frac{\kappa}{\alpha}} l^{-\frac{\kappa}{2}} \leq C \norm{f}_\infty n^{\frac{\kappa}{\alpha} - \frac{\kappa}{2}},
\end{align*}
for $\kappa = 0, 1, \dots, 4$. Combining Lemma \ref{lemma:one step} and the commutativity of $P_1$, $Q_1$ and $\whQ_l$, we have
\begin{align*} 
    \begin{split}
    &\mathrel{\phantom{=}} \norm{Q_l (Q_1 - P_1) f}_\infty \\
    &\leq \norm{(Q_l - \whQ_l) (Q_1 - P_1) f}_\infty + \norm{\whQ_l (Q_1 - P_1) f}_\infty \\
    &\leq \norm{(Q_1 - P_1) f}_\infty \eup^{-C' l} + \norm{(Q_1 - P_1) \whQ_l f}_\infty \\
    &\leq 2 \norm{f}_\infty \eup^{-C' l} + C \left( \sum_{\kappa = 0}^4 \norm{\nabla^{\kappa} \whQ_l f}_{\op,\infty} \right) n^{-1} \Lambda (n, \alpha, \gamma) \\
    &\leq 2 \norm{f}_\infty \eup^{-C' [ \frac{n}{4r} ]} + C \norm{f}_\infty \left( \sum_{\kappa = 0}^4 n^{\frac{\kappa}{\alpha} - \frac{\kappa}{2} - 1} \right) \Lambda (n, \alpha, \gamma) \\
    &\leq C \norm{f}_\infty n^{\frac{4}{\alpha} - 3} \Lambda (n, \alpha, \gamma),
    \end{split}
\end{align*}
where in the third inequality we use Lemma \ref{lemma:one step} to derive
\begin{align*}
    \norm{(Q_1 - P_1) \whQ_l f}_\infty &\leq C \left( \sum_{\kappa = 0}^4 \norm{\nabla^{\kappa} \whQ_l f}_{\op,\infty} \right) n^{-1} \Lambda (n, \alpha, \gamma). \qedhere
\end{align*}
\end{proof}

\begin{proof}[Proof of Theorem \ref{theorem1}]
Thanks to Lemma \ref{lemma6}, to prove the theorem we only need to verify \eqref{eq:lemma6hyp}, i.e.,
\begin{align*}
    \abs{\E f(S_{n}) - \E f(Y_1)} \leq C \norm{f}_\infty n^{-\delta}, \quad \forall n \geq 1, \; f \in \mathcal{C}_{b}^{4}(\R^d),
\end{align*}
for some $C, \delta > 0$. Recall \eqref{e:OneStepDec}, it suffice to bound the following sum on its right-hand side
\begin{align*}
    \sum_{k = 0}^{n-1} Q_{n-k-1} (Q_1 - P_1) P_k f (\mathbf{0}).
\end{align*}
For any fixed $\gamma > 0$, the proof is divided into five parts according to $\alpha$: 
\begin{enumerate}
    \item[(i)] for $\alpha \in ( (2 - \sqrt{2 \gamma}) \lor 1, 2)$;
    \item[(ii)] for $\alpha \in (1, 2 - \sqrt{2 \gamma} \, ]$ if $\gamma < \frac{1}{2}$;
    \item[(iii)] for $\alpha = 1$;
    \item[(iv)] for $\alpha \in [\frac{10}{10 + (\gamma \land 1)}, 1)$;
    \item[(v)] for $\alpha \in (0, \frac{10}{10 + (\gamma \land 1)})$.
\end{enumerate}
\vspace{8 pt}
\noindent (i) For $\alpha \in ( (2 - \sqrt{2 \gamma}) \lor 1, 2)$, as $0 \leq k \leq \left[ n^{\alpha / 2} \right]$, the commutativity of $P_k$ and $Q_l$ implies that
\begin{align*}
    \norm{Q_{n-k-1} (Q_1 - P_1) P_k f}_\infty
    = \norm{P_k (Q_1 - P_1) Q_{n-k-1} f}_\infty
    \leq \norm{(Q_1 - P_1) Q_{n-k-1} f}_\infty.
\end{align*}
Since $n - k - 1 \geq \frac{n}{4}$, according to Lemma \ref{lemma5-0},
\begin{align*}
    \norm{(Q_1 - P_1) Q_{n-k-1} f}_\infty
    \leq C \norm{f}_\infty \Lambda_2 (n, \alpha, 2, \gamma)
    = C \norm{f}_\infty n^{\frac{2}{\alpha} - 1 - \frac{(\alpha + \gamma) \land 2}{\alpha}} \ln n.
\end{align*}
So
\begin{align*}
    \norm{Q_{n-k-1} (Q_1 - P_1) P_k f}_\infty
    \leq C \norm{f}_\infty n^{\frac{2}{\alpha} - 1 - \frac{(\alpha + \gamma) \land 2}{\alpha}} \ln n, \quad 0 \leq k \leq \left[ n^{\frac{\alpha}{2}} \right].
\end{align*}

For $\left[ n^{\alpha / 2} \right] + 1 \leq k \leq n-1$, Lemma \ref{lemma4} derives
\begin{align*}
    \norm{Q_{n-k-1} (Q_1 - P_1) P_k f}_\infty
    \leq \norm{(Q_1 - P_1) P_k f}_\infty
    \leq C \norm{f}_\infty k^{- \frac{2}{\alpha}} n^{\frac{2}{\alpha} - \frac{(\alpha + \gamma) \land 2}{\alpha}} \ln n.
\end{align*}
Notice that
\begin{align*}
    \sum_{k = \left[ n^{\alpha / 2} \right] + 1}^{n-1} k^{- \frac{2}{\alpha}}
    \leq \int_{\left[ n^{\alpha / 2} \right]}^{n-1} x^{- \frac{2}{\alpha}} \, \dif  x
    \leq \frac{\alpha}{2 - \alpha} \left[ n^{\frac{\alpha}{2}} \right]^{1 - \frac{2}{\alpha}}
    \leq C n^{\frac{\alpha}{2} - 1},
\end{align*}
by \eqref{e:OneStepDec} we have
\begin{align*}
    \abs{\E f(S_{n}) - \E f(Y_1)}
    &\leq \sum_{k = 0}^{\left[ n^{\alpha / 2} \right]} \norm{Q_{n-k-1} (Q_1 - P_1) P_k f}_\infty + \sum_{k = \left[ n^{\alpha / 2} \right] + 1}^{n-1} \norm{Q_{n-k-1} (Q_1 - P_1) P_k f}_\infty \\
    &\leq C \norm{f}_\infty n^{\frac{\alpha}{2} + \frac{2}{\alpha} - 1 - \frac{(\alpha + \gamma) \land 2}{\alpha}} \ln n + C \norm{f}_\infty n^{\frac{2}{\alpha} - \frac{(\alpha + \gamma) \land 2}{\alpha}} \ln n \sum_{k = \left[ n^{\alpha / 2} \right] + 1}^{n-1} k^{- \frac{2}{\alpha}} \\
    &\leq C \norm{f}_\infty n^{\frac{\alpha}{2} + \frac{2}{\alpha} - 1 - \frac{(\alpha + \gamma) \land 2}{\alpha}} \ln n.
\end{align*}
Since $\alpha \in ( (2 - \sqrt{2 \gamma}) \lor 1, 2)$ implies that $\frac{\alpha}{2} + \frac{2}{\alpha} - 1 - \frac{(\alpha + \gamma) \land 2}{\alpha} < 0$, there exists $\delta > 0$ such that
\begin{align*}
    \abs{\E f(S_{n}) - \E f(Y_1)} \leq C \norm{f}_\infty n^{-\delta}, \quad \forall n \geq 1, \; f \in \mathcal{C}_{b}^{4}(\R^d).
\end{align*}

\vspace{8 pt}
\noindent (ii) For $\alpha \in (1, 2 - \sqrt{2 \gamma} \, ]$ as $\gamma < \frac{1}{2}$, we prove the conclusion by backward induction on $\alpha$. We first show that whenever
\begin{align} \label{eq:inhyp1}
    \dtv (\mu_n^{\wtalpha}, S_{\wtalpha} (\nu)) \leq c n^{-\delta}, \quad \forall n \geq 1,
\end{align}
holds for some $\wtalpha \in (1, 2)$ and $c, \delta > 0$, we have
\begin{align} \label{eq:incon1}
    \dtv (\mu_n^{\alpha}, S_\alpha (\nu)) \leq C n^{-\delta'}, \quad \forall n \geq 1,
\end{align}
for any $\alpha \in ( (1 - \frac{\gamma}{2}) \wtalpha, \wtalpha) \cap (1, 2)$, where the constants $C, \delta' > 0$ do not depend on $n$.

For $0 \leq k \leq \left[ \frac{n}{3} \right]$, the commutativity of $P_k$ and $Q_l$ implies that
\begin{align*}
    \norm{Q_{n-k-1} (Q_1 - P_1) P_k f}_\infty
    = \norm{P_k (Q_1 - P_1) Q_{n-k-1} f}_\infty
    \leq \norm{(Q_1 - P_1) Q_{n-k-1} f}_\infty.
\end{align*}
Combining \eqref{eq:inhyp1} and $n-k-1 \geq \frac{n}{2}$, Lemma \ref{lemma5} shows
\begin{align*}
        \norm{(Q_1 - P_1) Q_{n-k-1} f}_\infty \leq C \norm{f}_\infty \Lambda_2 (n, \alpha, \wtalpha, \gamma) = C \norm{f}_\infty n^{\frac{2}{\alpha} - \frac{2}{\wtalpha} - \frac{(\alpha + \gamma) \land 2}{\alpha}} \ln n,
\end{align*}
so it follows that
\begin{align*}
        \norm{Q_{n - k - 1} ( Q_1 - P_1 ) P_k f}_\infty \leq C \norm{f}_\infty n^{\frac{2}{\alpha} - \frac{2}{\wtalpha} - \frac{(\alpha + \gamma) \land 2}{\alpha}} \ln n, \quad 0 \leq k \leq \left[ \frac{n}{3} \right].
\end{align*}
For $\left[ \frac{n}{3} \right] + 1 \leq k \leq n-1$, according to Lemma \ref{lemma4},
\begin{align*}
    \norm{Q_{n - k - 1} ( Q_1 - P_1 ) P_k f}_\infty 
    \leq \norm{( Q_1 - P_1 ) P_k f}_\infty
    \leq C \norm{f}_\infty n^{- \frac{(\alpha + \gamma) \land 2}{\alpha}} \ln n.
\end{align*}
By \eqref{e:OneStepDec} we have
\begin{align*}
    \abs{\E f(S_{n}) - \E f(Y_1)}
    &\leq \sum_{k = 0}^{\left[ \frac{n}{3} \right]} \norm{Q_{n-k-1} (Q_1 - P_1) P_k f}_\infty + \sum_{k = \left[ \frac{n}{3} \right] + 1}^{n-1} \norm{Q_{n-k-1} (Q_1 - P_1) P_k f}_\infty \\
    &\leq C \norm{f}_\infty n^{\frac{2}{\alpha} - \frac{2}{\wtalpha} + 1 - \frac{(\alpha + \gamma) \land 2}{\alpha}} \ln n + C \norm{f}_\infty n^{1 - \frac{(\alpha + \gamma) \land 2}{\alpha}} \ln n \\
    &\leq C \norm{f}_\infty n^{\frac{2}{\alpha} - \frac{2}{\wtalpha} + 1 - \frac{(\alpha + \gamma) \land 2}{\alpha}} \ln n.
\end{align*}
Since $\alpha \in ( (1 - \frac{\gamma}{2}) \wtalpha, \wtalpha) \cap (1, 2)$ derives that $\frac{2}{\alpha} - \frac{2}{\wtalpha} + 1 - \frac{(\alpha + \gamma) \land 2}{\alpha} < 0$, there exists $\delta' > 0$ such that
\begin{align*}
    \abs{\E f(S_{n}) - \E f(Y_1)} \leq C \norm{f}_\infty n^{-\delta'}, \quad \forall n \geq 1, \; f \in \mathcal{C}_{b}^{4}(\R^d).
\end{align*}
According to \cite[Theorem 7.10, 8.14]{Folland1999}, any $f \in \mathcal{B}_{b} (\R^d)$ can be approximated almost everywhere by a sequence of $f_i \in \mathcal{C}_{b}^{4}(\R^d)$ satisfying $\norm{f_i}_\infty \leq 2 \norm{f}_\infty$, the above inequality implies that
\begin{align*}
    \abs{\E f(S_{n}) - \E f(Y_1)} \leq C \norm{f}_\infty n^{-\delta'}, \quad \forall n \geq 1, \; f \in \mathcal{B}_{b}(\R^d).
\end{align*}
By Kantorovich-Rubinstein theorem \cite{Villani2003}, it is equivalent to
\begin{align*}
    \dtv (\mu_n^{\alpha}, S_\alpha (\nu)) \leq C n^{-\delta'}, \quad \forall n \geq 1.
\end{align*}

For any fixed $\alpha \in (1, 2 - \sqrt{2 \gamma} \, ]$, there exist $\alpha = \alpha_m < \alpha_{m-1} < \dots < \alpha_1 = 2 - \sqrt{\gamma}$ such that $\alpha_i \in ((1 - \frac{\gamma}{2}) \alpha_{i-1}, \alpha_{i-1})$, $i = 2, \dots, m$. It has been shown in (i) that
\begin{align*}
    \dtv (\mu_n^{\alpha_1}, S_{\alpha_1} (\nu)) \leq C_1 n^{-\delta_1}, \quad \forall n \geq 1,
\end{align*}
for $\alpha_1 = 2 - \sqrt{\gamma} \in (2 - \sqrt{2 \gamma}, 2)$ and some $C_1, \delta_1 > 0$. Using the above result \eqref{eq:incon1} repeatedly, we have
\begin{align*}
    \dtv (\mu_n^{\alpha_i}, S_{\alpha_i} (\nu)) \leq C_i n^{-\delta_i}, \quad \forall n \geq 1,
\end{align*}
for some $C_i, \delta_i > 0$, $i = 2, \dots, m$. Since $\alpha_m = \alpha$, Kantorovich-Rubinstein theorem \cite{Villani2003} implies that
\begin{align*}
    \abs{\E f(S_{n}) - \E f(Y_1)} \leq C_m \norm{f}_\infty n^{-\delta_m}, \quad \forall n \geq 1, \; f \in \mathcal{C}_{b}^{4}(\R^d).
\end{align*}

\vspace{8 pt}
\noindent (iii) For $\alpha = 1$, take $\wtalpha_0 = \frac{5}{5 - (\gamma \land 1)}$. We have shown that
\begin{align} \label{eq:inhyp2}
    \dtv (\mu_n^{\wtalpha_0}, S_{\wtalpha_0} (\nu)) \leq c n^{-\delta}, \quad \forall n \geq 1,
\end{align}
for some $c, \delta > 0$.

For $0 \leq k \leq \left[ \frac{n}{3} \right]$, the commutativity of $P_k$ and $Q_l$ implies that
\begin{align*}
    \norm{Q_{n-k-1} (Q_1 - P_1) P_k f}_\infty
    = \norm{P_k (Q_1 - P_1) Q_{n-k-1} f}_\infty
    \leq \norm{(Q_1 - P_1) Q_{n-k-1} f}_\infty.
\end{align*}
Combining \eqref{eq:inhyp2} and $n-k-1 \geq \frac{n}{2}$, Lemma \ref{lemma5} shows
\begin{align*}
    \norm{(Q_1 - P_1) Q_{n-k-1} f}_\infty \leq C \norm{f}_\infty \Lambda_2 (n, 1, \wtalpha_0, \gamma) = C \norm{f}_\infty n^{3 - \frac{4}{\wtalpha_0} - (\gamma \land 1)} \ln n,
\end{align*}
so it follows that
\begin{align*}
        \norm{Q_{n - k - 1} ( Q_1 - P_1 ) P_k f}_\infty \leq C \norm{f}_\infty n^{3 - \frac{4}{\wtalpha_0} - (\gamma \land 1)} \ln n, \quad 0 \leq k \leq \left[ \frac{n}{3} \right].
\end{align*}
For $\left[ \frac{n}{3} \right] + 1 \leq k \leq n-1$, according to Lemma \ref{lemma4},
\begin{align*}
    \norm{Q_{n - k - 1} ( Q_1 - P_1 ) P_k f}_\infty 
    \leq \norm{( Q_1 - P_1 ) P_k f}_\infty
    \leq C \norm{f}_\infty n^{- 1 - (\gamma \land 1)} \ln n.
\end{align*}
By \eqref{e:OneStepDec} we have
\begin{align*}
    \abs{\E f(S_{n}) - \E f(Y_1)}
    &\leq \sum_{k = 0}^{\left[ \frac{n}{3} \right]} \norm{Q_{n-k-1} (Q_1 - P_1) P_k f}_\infty + \sum_{k = \left[ \frac{n}{3} \right] + 1}^{n-1} \norm{Q_{n-k-1} (Q_1 - P_1) P_k f}_\infty \\
    &\leq C \norm{f}_\infty n^{4 - \frac{4}{\wtalpha_0} - (\gamma \land 1)} \ln n + C \norm{f}_\infty n^{- (\gamma \land 1)} \ln n \\
    &\leq C \norm{f}_\infty n^{4 - \frac{4}{\wtalpha_0} - (\gamma \land 1)} \ln n.
\end{align*}
Since $\wtalpha_0 = \frac{5}{5 - (\gamma \land 1)} \in (1, \frac{4}{4 - (\gamma \land 1)})$ derives that $4 - \frac{4}{\wtalpha_0} - (\gamma \land 1) < 0$, there exists $\delta > 0$ such that
\begin{align*}
    \abs{\E f(S_{n}) - \E f(Y_1)} \leq C \norm{f}_\infty n^{-\delta}, \quad \forall n \geq 1, \; f \in \mathcal{C}_{b}^{4}(\R^d).
\end{align*}

\vspace{8 pt}
\noindent (iv) For $\alpha \in [\frac{10}{10 + (\gamma \land 1)}, 1)$, we have
\begin{align*}
    \frac{2}{\alpha} + \frac{\alpha}{\wtalpha_0}
    \leq \frac{10 + (\gamma \land 1)}{5} + \frac{5 - (\gamma \land 1)}{5} \alpha
    < 3.
\end{align*}
Recall that $\wtalpha_0 = \frac{5}{5 - (\gamma \land 1)}$ and
\begin{align} \label{eq:inhyp3}
    \dtv (\mu_n^{\wtalpha_0}, S_{\wtalpha_0} (\nu)) \leq c n^{-\delta}, \quad \forall n \geq 1,
\end{align}
holds for some $c, \delta > 0$.

For $0 \leq k \leq \left[ n^{\alpha / \wtalpha_0} \right]$, the commutativity of $P_k$ and $Q_l$ implies that
\begin{align*}
    \norm{Q_{n-k-1} (Q_1 - P_1) P_k f}_\infty
    = \norm{P_k (Q_1 - P_1) Q_{n-k-1} f}_\infty
    \leq \norm{(Q_1 - P_1) Q_{n-k-1} f}_\infty.
\end{align*}
Combining \eqref{eq:inhyp3} and $n-k-1 \geq \frac{n}{2}$, Lemma \ref{lemma5} shows
\begin{align*}
    \norm{(Q_1 - P_1) Q_{n-k-1} f}_\infty 
    \leq C \norm{f}_\infty \Lambda_2 (n, \alpha, \wtalpha_0, \gamma)
    = C \norm{f}_\infty \left[ n^{\frac{2}{\alpha} - \frac{2}{\wtalpha_0} - 1 - (\gamma \land 1)} + n^{- \frac{1}{\wtalpha_0}} \right] \ln n,
\end{align*}
so it follows that
\begin{align*}
    \norm{Q_{n - k - 1} ( Q_1 - P_1 ) P_k f}_\infty \leq C \norm{f}_\infty \left[ n^{\frac{2}{\alpha} - \frac{2}{\wtalpha_0} - 1 - (\gamma \land 1)} + n^{- \frac{1}{\wtalpha_0}} \right] \ln n, \quad 0 \leq k \leq \left[ n^{\frac{\alpha}{\wtalpha_0}} \right].
\end{align*}
For $\left[ n^{\alpha / \wtalpha_0} \right] + 1 \leq k \leq n-1$, according to Lemma \ref{lemma4},
\begin{align*}
    \norm{Q_{n - k - 1} ( Q_1 - P_1 ) P_k f}_\infty
    \leq \norm{( Q_1 - P_1 ) P_k f}_\infty
    \leq C \norm{f}_\infty \left[ k^{-\frac{2}{\alpha}} n^{\frac{2}{\alpha} - 1 - (\gamma \land 1)} + k^{-\frac{1}{\alpha}} \right] \ln n.
\end{align*}
Notice that
\begin{align*}
    \sum_{k = \left[ n^{\alpha / \wtalpha_0} \right] + 1}^{n-1} k^{- \frac{\kappa}{\alpha}}
    \leq \int_{\left[ n^{\alpha / \wtalpha_0} \right]}^{n-1} x^{- \frac{\kappa}{\alpha}} \, \dif  x
    \leq \frac{\alpha}{\kappa - \alpha} \left[ n^{\frac{\alpha}{\wtalpha_0}} \right]^{1 - \frac{\kappa}{\alpha}}
    \leq C n^{- \frac{\kappa - \alpha}{\wtalpha_0}}, \quad \kappa = 1, 2,
\end{align*}
by \eqref{e:OneStepDec} we have
\begin{align*}
    &\mathrel{\phantom{=}} \abs{\E f(S_{n}) - \E f(Y_1)} \\
    &\leq \sum_{k = 0}^{\left[ n^{\alpha / \wtalpha_0} \right]} \norm{Q_{n-k-1} (Q_1 - P_1) P_k f}_\infty + \sum_{k = \left[ n^{\alpha / \wtalpha_0} \right] + 1}^{n-1} \norm{Q_{n-k-1} (Q_1 - P_1) P_k f}_\infty \\
    &\leq C \norm{f}_\infty \sum_{k = 0}^{\left[ n^{\alpha / \wtalpha_0} \right]} \left[ n^{\frac{2}{\alpha} - \frac{2}{\wtalpha_0} - 1 - (\gamma \land 1)} + n^{- \frac{1}{\wtalpha_0}} \right] \ln n \\
    &\quad + C \norm{f}_\infty \sum_{k = \left[ n^{\alpha / \wtalpha_0} \right] + 1}^{n-1} \left[ k^{-\frac{2}{\alpha}} n^{\frac{2}{\alpha} - 1 - (\gamma \land 1)} + k^{-\frac{1}{\alpha}} \right] \ln n \\
    &\leq C \norm{f}_\infty n^{\frac{\alpha}{\wtalpha_0}} \left[ n^{\frac{2}{\alpha} - \frac{2}{\wtalpha_0} - 1 - (\gamma \land 1)} + n^{- \frac{1}{\wtalpha_0}} \right] \ln n + C \norm{f}_\infty \left[ n^{- \frac{2 - \alpha}{\wtalpha_0}} n^{\frac{2}{\alpha} - 1 - (\gamma \land 1)} + n^{- \frac{1 - \alpha}{\wtalpha_0}} \right] \ln n \\
    &\leq C \norm{f}_\infty \left[ n^{\frac{2}{\alpha} - \frac{2 - \alpha}{\wtalpha_0} - 1 - (\gamma \land 1)} + n^{- \frac{1 - \alpha}{\wtalpha_0}} \right] \ln n.
\end{align*}
Since $\frac{2}{\alpha} + \frac{\alpha}{\wtalpha_0} < 3$ and $\wtalpha_0 = \frac{5}{5 - (\gamma \land 1)} < \frac{2}{2 - (\gamma \land 1)}$, we have
\begin{align*}
    \frac{2}{\alpha} - \frac{2 - \alpha}{\wtalpha_0} - 1 - (\gamma \land 1) < -\frac{2}{\wtalpha_0} + 2 - (\gamma \land 1) < 0.
\end{align*}
Combining $- \frac{1 - \alpha}{\wtalpha_0} < 0$, there exists $\delta > 0$ such that
\begin{align*}
    \abs{\E f(S_{n}) - \E f(Y_1)} \leq C \norm{f}_\infty n^{-\delta}, \quad \forall n \geq 1, \; f \in \mathcal{C}_{b}^{4}(\R^d).
\end{align*}

\vspace{8 pt}
\noindent (v) For $\alpha \in (0, \frac{10}{10 + (\gamma \land 1)})$, we prove the conclusion by backward induction on $\alpha$. We first show that whenever
\begin{align} \label{eq:inhyp4}
    \dtv (\mu_n^{\wtalpha}, S_{\wtalpha} (\nu)) \leq c n^{-\delta}, \quad \forall n \geq 1,
\end{align}
holds for some $\wtalpha \in (0, 1)$ and $c, \delta > 0$, we have
\begin{align} \label{eq:incon4}
    \dtv (\mu_n^{\alpha}, S_\alpha (\nu)) \leq C n^{-\delta'}, \quad \forall n \geq 1,
\end{align}
for any $\alpha \in ( \frac{2 \wtalpha}{2 + \wtalpha (\gamma \land 1)}, \wtalpha) \cap (0, 1)$, where the constants $C, \delta' > 0$ do not depend on $n$.

For $0 \leq k \leq \left[ \frac{n}{3} \right]$, the commutativity of $P_k$ and $Q_l$ implies that
\begin{align*}
    \norm{Q_{n-k-1} (Q_1 - P_1) P_k f}_\infty
    = \norm{P_k (Q_1 - P_1) Q_{n-k-1} f}_\infty
    \leq \norm{(Q_1 - P_1) Q_{n-k-1} f}_\infty.
\end{align*}
Combining \eqref{eq:inhyp4} and $n-k-1 \geq \frac{n}{2}$, Lemma \ref{lemma5} shows
\begin{align*}
        \norm{(Q_1 - P_1) Q_{n-k-1} f}_\infty \leq C \norm{f}_\infty \Lambda_2 (n, \alpha, \wtalpha, \gamma) = C \norm{f}_\infty \left[ n^{\frac{2}{\alpha} - \frac{2}{\wtalpha} - 1 - (\gamma \land 1)} + n^{- \frac{1}{\wtalpha}} \right] \ln n,
\end{align*}
so it follows that
\begin{align*}
        \norm{Q_{n - k - 1} ( Q_1 - P_1 ) P_k f}_\infty \leq C \norm{f}_\infty \left[ n^{\frac{2}{\alpha} - \frac{2}{\wtalpha} - 1 - (\gamma \land 1)} + n^{- \frac{1}{\wtalpha}} \right] \ln n, \quad 0 \leq k \leq \left[ \frac{n}{3} \right].
\end{align*}
For $\left[ \frac{n}{3} \right] + 1 \leq k \leq n-1$, according to Lemma \ref{lemma4},
\begin{align*}
    \norm{Q_{n - k - 1} ( Q_1 - P_1 ) P_k f}_\infty 
    \leq \norm{( Q_1 - P_1 ) P_k f}_\infty
    \leq C \norm{f}_\infty \left[ n^{- 1 - (\gamma \land 1)} + n^{-\frac{1}{\alpha}} \right] \ln n.
\end{align*}
By \eqref{e:OneStepDec} we have
\begin{align*}
    \abs{\E f(S_{n}) - \E f(Y_1)}
    &\leq \sum_{k = 0}^{\left[ \frac{n}{3} \right]} \norm{Q_{n-k-1} (Q_1 - P_1) P_k f}_\infty + \sum_{k = \left[ \frac{n}{3} \right] + 1}^{n-1} \norm{Q_{n-k-1} (Q_1 - P_1) P_k f}_\infty \\
    &\leq C \norm{f}_\infty n \left[ n^{\frac{2}{\alpha} - \frac{2}{\wtalpha} - 1 - (\gamma \land 1)} + n^{- \frac{1}{\wtalpha}} \right] \ln n + C \norm{f}_\infty n \left[ n^{- 1 - (\gamma \land 1)} + n^{-\frac{1}{\alpha}} \right] \ln n \\
    &\leq C \left[ n^{\frac{2}{\alpha} - \frac{2}{\wtalpha} - (\gamma \land 1)} + n^{1 - \frac{1}{\wtalpha}} \right] \ln n.
\end{align*}
Since $\alpha \in ( \frac{2 \wtalpha}{2 + \wtalpha (\gamma \land 1)}, \wtalpha) \cap (0, 1)$ and $\wtalpha \in (0, 1)$, we have $\frac{2}{\alpha} - \frac{2}{\wtalpha} - (\gamma \land 1) < 0$, $1 - \frac{1}{\wtalpha} < 0$, so there exists $\delta' > 0$ such that
\begin{align*}
    \abs{\E f(S_{n}) - \E f(Y_1)} \leq C \norm{f}_\infty n^{-\delta'}, \quad \forall n \geq 1, \; f \in \mathcal{C}_{b}^{4}(\R^d).
\end{align*}
According to \cite[Theorem 7.10, 8.14]{Folland1999}, any $f \in \mathcal{B}_{b} (\R^d)$ can be approximated almost everywhere by a sequence of $f_i \in \mathcal{C}_{b}^{4}(\R^d)$ satisfying $\norm{f_i}_\infty \leq 2 \norm{f}_\infty$, the above inequality implies that
\begin{align*}
    \abs{\E f(S_{n}) - \E f(Y_1)} \leq C \norm{f}_\infty n^{-\delta'}, \quad \forall n \geq 1, \; f \in \mathcal{B}_{b}(\R^d).
\end{align*}
By Kantorovich-Rubinstein theorem \cite{Villani2003}, it is equivalent to
\begin{align*}
    \dtv (\mu_n^{\alpha}, S_\alpha (\nu)) \leq C n^{-\delta'}, \quad \forall n \geq 1.
\end{align*}

For any fixed $\alpha \in (0, \frac{10}{10 + (\gamma \land 1)})$, there exist $\alpha = \alpha_m < \alpha_{m-1} < \dots < \alpha_1 = \frac{10}{10 + (\gamma \land 1)}$ such that $\frac{1}{\alpha_i} \in (\frac{1}{\alpha_{i-1}}, \frac{1}{\alpha_{i-1}} + \frac{\gamma \land 1}{2})$, or equivalently, $\alpha_i \in ( \frac{2 \alpha_{i-1}}{2 + \alpha_{i-1} (\gamma \land 1)}, \alpha_{i-1})$, $i = 2, \dots, m$. It has been shown in (iv) that
\begin{align*}
    \dtv (\mu_n^{\alpha_1}, S_{\alpha_1} (\nu)) \leq C_1 n^{-\delta_1}, \quad \forall n \geq 1,
\end{align*}
for $\alpha_1 = \frac{10}{10 + (\gamma \land 1)}$ and some $C_1, \delta_1 > 0$. Using the above result \eqref{eq:incon4} repeatedly, we have
\begin{align*}
    \dtv (\mu_n^{\alpha_i}, S_{\alpha_i} (\nu)) \leq C_i n^{-\delta_i}, \quad \forall n \geq 1,
\end{align*}
for some $C_i, \delta_i > 0$, $i = 2, \dots, m$. Since $\alpha_m = \alpha$, Kantorovich-Rubinstein theorem \cite{Villani2003} implies that
\begin{align*}
    \abs{\E f(S_{n}) - \E f(Y_1)} &\leq C_m \norm{f}_\infty n^{-\delta_m}, \quad \forall n \geq 1, \; f \in \mathcal{C}_{b}^{4}(\R^d). \qedhere
\end{align*}
\end{proof}

\begin{proof}[Proof of Theorem \ref{corollary1}]
The Pareto distribution with density function
\begin{align*}
    p(x) = \frac{\alpha \Gamma (\frac{d}{2} + 1)}{\pi^{d / 2} d \abs{x}^{d + \alpha}} \mathbf{1}_{[1, +\infty)} (\abs{x}),
\end{align*}
is symmetric, locally lower bounded by Lebesgue measure and satisfies Assumption \ref{A:mX} with $\gamma > 1$. 
And $\nu_0$ satisfies Assumption \ref{A:nu}. Therefore, Theorem \ref{theorem1} shows that
\begin{align} \label{eq:cor1pr1}
    \dtv ( \mu_n^\alpha, S_\alpha (\nu_0) )
    \leq C n^{-\frac{(2 - \alpha) \land \alpha}{\alpha}}, \quad \forall n \geq 1.
\end{align}

According to Kantorovich-Rubinstein theorem \cite{Villani2003}, to prove
\begin{align*}
    \dtv ( \mu_n^\alpha, S_\alpha (\nu_0) )
    = \sup_{\substack{\norm{f}_\infty \leq 1 \\ f\in \mathcal{B}_{b}(\R^d)}} \abs{\E f(S_n) - \E f(Y)}
    \geq c n^{-\frac{(2 - \alpha) \land \alpha}{\alpha}}, \quad \forall n \geq 1,
\end{align*}
it suffices to show that for the specific $f (x) = \cos \langle e_1, x \rangle$,
\begin{align*}
    \abs{\E \cos \langle e_1, S_n \rangle - \E \cos \langle e_1, Y \rangle}
    \geq c n^{-\frac{(2 - \alpha) \land \alpha}{\alpha}}, \quad \forall n \geq 1,
\end{align*}
where $e_1 = (1, 0, \dots, 0) \in \R^d$, $S_n = \sigma^{-1} n^{-\frac{1}{\alpha}} \sum_{i=1}^{n} X_i$ and $Y \sim S_\alpha (\nu_0)$. Because the distributions of $S_n$ and $Y$ are symmetric, their characteristic functions have the following representations 
\begin{align*}
    \varphi_{S_n} (\lambda) = \E \cos \langle \lambda, S_n \rangle, \qquad
    \varphi_{Y} (\lambda) = \E \cos \langle \lambda, Y \rangle.
\end{align*}
So it is enough to show that
\begin{align*}
    \liminf_{n \to \infty} n^{\frac{(2 - \alpha) \land \alpha}{\alpha}} \abs{\varphi_{S_n} (e_1) - \varphi_{Y} (e_1)} > 0.
\end{align*}

The characteristic function of $X_1$ is given by
\begin{align*}
    \varphi_{X} (\lambda)
    &= 1 - \frac{\alpha \Gamma (\frac{d}{2} + 1)}{\pi^{d / 2} d} \int_{\R^d \setminus B (\mathbf{0}, 1)} \frac{1 - \cos \langle \lambda, x \rangle}{\abs{x}^{d + \alpha}} \, \dif x,
\end{align*}
which derives that
\begin{align*}
    \varphi_{S_n} (\lambda)
    = \left( 1 - \frac{\alpha \Gamma (\frac{d}{2} + 1)}{\pi^{d / 2} d \sigma^\alpha n} \int_{\R^d \setminus B (\mathbf{0}, 1 / (\sigma n^{1 / \alpha}))} \frac{1 - \cos \langle \lambda, x \rangle}{\abs{x}^{d + \alpha}} \, \dif x \right)^n.
\end{align*}
By definition, the characteristic function of $Y$ is given by
\begin{align*}
    \varphi_{Y} (\lambda)
    = \exp \left( -\int_{\mathbb{S}^{d-1}} \abs{\langle \lambda,\theta \rangle}^\alpha \nu_0 (\dif \theta) \right)
    = \exp \left( -\frac{\alpha \Gamma (\frac{d}{2} + 1)}{\pi^{d / 2} d \sigma^\alpha} \int_{\R^d} \frac{1 - \cos \langle \lambda, x \rangle}{\abs{x}^{d + \alpha}} \, \dif x \right).
\end{align*}
It can be derived from \eqref{eq:cor1pr1} that $\varphi_{S_n} (e_1) \to \varphi_Y (e_1)$ as $n \to \infty$, so
\begin{align*}
    \lim_{n \to \infty} \frac{\varphi_{S_n} (e_1) - \varphi_{Y} (e_1)}{\ln \varphi_{S_n} (e_1) - \ln \varphi_{Y} (e_1)}
    = \varphi_{Y} (e_1)
    > 0.
\end{align*}
It suffices to estimate
\begin{align*}
    \Delta_n
    &= n^{\frac{(2 - \alpha) \land \alpha}{\alpha}} (\ln \varphi_{S_n} (e_1) - \ln \varphi_{Y} (e_1)) \\
    &= n^{\frac{2(\alpha \land 1)}{\alpha}} \left[ \ln \left( 1 - \frac{C_0}{n} I \left( n^{-\frac{1}{\alpha}} \right) \right) + \frac{C_0}{n} I (0) \right] \\
    &= n^{\frac{2(\alpha \land 1)}{\alpha}} \left[ \frac{C_0}{n} \left( I (0) - I \left( n^{-\frac{1}{\alpha}} \right) \right) - \frac{C_0^2}{2 n^2} I \left( n^{-\frac{1}{\alpha}} \right)^2 + o \left( \frac{1}{n^2} \right) \right],
\end{align*}
as $n \to \infty$, where 
\begin{align*}
    C_0 = \frac{\alpha \Gamma (\frac{d}{2} + 1)}{\pi^{d / 2} d \sigma^\alpha}, \qquad
    I (r) = \int_{\R^d \setminus B (\mathbf{0}, r / \sigma)} \frac{1 - \cos \langle e_1, x \rangle}{\abs{x}^{d + \alpha}} \, \dif x, \quad r \geq 0.
\end{align*}

By L'H{\^o}pital's rule, it can be shown that
\begin{align*}
    \lim_{r \to 0} \frac{I (0) - I (r)}{r^{2 - \alpha}}
    = \frac{1}{(4 - 2 \alpha) \sigma^{2 - \alpha}} \int_{\mathbb{S}^{d-1}} \abs{\langle e_1, \theta \rangle}^2 \dif \theta
    = \frac{1}{(4 - 2 \alpha) d \sigma^{2 - \alpha}}.
\end{align*}
So we have

\noindent (i) For $\alpha \in (1,2)$,
\begin{align*}
    \lim_{n \to \infty} \Delta_n = \frac{C_0}{(4 - 2 \alpha) d \sigma^{2 - \alpha}} = \frac{\alpha \Gamma (\frac{d}{2} + 1)}{(4 - 2 \alpha) \pi^{d / 2} d^2 \sigma^2}.
\end{align*}

\noindent (ii) For $\alpha \in (0,1)$,
\begin{align*}
    \lim_{n \to \infty} \Delta_n = -\frac{1}{2} \left( \frac{\alpha \Gamma (\frac{d}{2} + 1)}{\pi^{d / 2} d \sigma^\alpha} \int_{\R^d} \frac{1 - \cos \langle e_1, x \rangle}{\abs{x}^{d + \alpha}} \, \dif x \right)^2.
\end{align*}

\noindent (iii) For $\alpha = 1$,
\begin{align*}
    \lim_{n \to \infty} \Delta_n
    = \frac{\Gamma (\frac{d}{2} + 1)}{2 \pi^{d / 2} d^2 \sigma^2} - \frac{1}{2} \left( \frac{\Gamma (\frac{d}{2} + 1)}{\pi^{d / 2} d \sigma} \int_{\R^d} \frac{1 - \cos \langle e_1, x \rangle}{\abs{x}^{d + 1}} \, \dif x \right)^2
    = \frac{2 \Gamma (\frac{d}{2} + 1)}{\pi^{d / 2 + 2} d^2} - \frac{2 \Gamma (\frac{d}{2} + 1)^2}{\pi d^2 \Gamma (\frac{d + 1}{2})^2},
\end{align*}
where the right-hand side of above equation is not equal to zero. The desired result follows from
\begin{align*}
    \lim_{n \to \infty} n^{\frac{(2 - \alpha) \land \alpha}{\alpha}} \abs{\varphi_{S_n} (e_1) - \varphi_{Y} (e_1)}
    &= \varphi_Y (e_1) \lim_{n \to \infty} \abs{\Delta_n}
    > 0. \qedhere
\end{align*}
\end{proof}

\section*{Appendix}

\appendix

\section{One step error}

\begin{lemma} \label{mgeneratorerr}
   Let $\alpha\in (0,2)$ and $\mathcal{L}^{\alpha,\nu}$ be defined as in \eqref{mYgenerator}.
   
   \noindent (i) If $\alpha\in (1,2)$, then there exist a positive constant $C$ such that for any $f \in \mathcal{C}^{2}_{b} \left( \mathbb{R}^{d}\right)$ and $x,y\in \mathbb{R}^{d}$, one has
    \begin{align*}
         \left|\mathcal{L}^{\alpha,\nu}f(x)-\mathcal{L}^{\alpha,\nu}f(y)\right|\le C\|\nabla ^{2}f\|_{\op,\infty}|x-y|^{2-\alpha}.
    \end{align*}
    
    \noindent (ii) If $\alpha\in (0,1)$, then there exist a positive constant $C$ such that for any $f\in \mathcal{C}^{2}_{b}\left( \mathbb{R}^{d}\right)$, one has
    \begin{align*}
        \|\mathcal{L}^{\alpha,\nu} \mathcal{L}^{\alpha,\nu}f \|_{\infty}
        \le C\left( \|f\|_{\infty}+\|\nabla f\|_{\op,\infty}+\|\nabla^{2}f\|_{\op,\infty} \right).
    \end{align*}
    
    \noindent (iii) If $\alpha=1$, then there exist a positive constant $C$ such that for any $f\in \mathcal{C}^{4}_{b}\left( \mathbb{R}^{d}\right)$, one has
    \begin{align*}
        \|\mathcal{L}^{\alpha,\nu} \mathcal{L}^{\alpha,\nu}f \|_{\infty}
        \le C\left(\|f\|_{\infty}+\|\nabla^{2}f\|_{\op,\infty}+\|\nabla^{4}f\|_{\op,\infty} \right).
    \end{align*}
\end{lemma}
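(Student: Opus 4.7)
\medskip

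\noindent\textbf{Proof plan for Lemma \ref{mgeneratorerr}.} The key observation is that $\nu$ is a probability (hence finite) measure on $\mathbb{S}^{d-1}$, so in all three parts it suffices to control the inner radial integral
\begin{align*}
L_\theta^\alpha f(x) = \int_0^{+\infty} \frac{f(x+r\theta) - f(x) - k_\alpha(r) r \langle \nabla f(x),\theta\rangle}{r^{1+\alpha}} \dif r
\end{align*}
uniformly in $\theta\in\mathbb{S}^{d-1}$, and then integrate against $\nu(\dif\theta)$. Another recurring tool is the commutativity $\nabla^{\kappa} \mathcal{L}^{\alpha,\nu} f = \mathcal{L}^{\alpha,\nu} \nabla^{\kappa} f$ for $f$ smooth enough, which follows from differentiation under the integral sign.

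For part \textbf{(i)} ($\alpha\in(1,2)$, $k_\alpha\equiv 1$), set $h=|x-y|$ and write the difference $\mathcal{L}^{\alpha,\nu}f(x)-\mathcal{L}^{\alpha,\nu}f(y)$ as a single integral over $\mathbb{S}^{d-1}\times(0,+\infty)$ of
$G(x,\theta,r)-G(y,\theta,r)$ divided by $r^{1+\alpha}$, where $G(x,\theta,r)=f(x+r\theta)-f(x)-r\langle\nabla f(x),\theta\rangle$. I split the radial integral at $r=h$. On $\{r\le h\}$, the second-order Taylor remainder gives $|G(x,\theta,r)|\le \tfrac12\|\nabla^2 f\|_{\op,\infty} r^2$, so
\begin{align*}
\int_0^h \frac{|G(x,\theta,r)-G(y,\theta,r)|}{r^{1+\alpha}}\dif r \le \|\nabla^2 f\|_{\op,\infty}\int_0^h r^{1-\alpha}\dif r = \frac{\|\nabla^2 f\|_{\op,\infty}}{2-\alpha}h^{2-\alpha}.
\end{align*}
On $\{r>h\}$, the finite-difference identity
$f(x+r\theta)-f(y+r\theta)-f(x)+f(y)=\int_0^1\!\!\int_0^1 \langle\nabla^2 f(y+t(x-y)+sr\theta)(r\theta),x-y\rangle \dif s\dif t$
combined with the mean-value bound $|r\langle\nabla f(x)-\nabla f(y),\theta\rangle|\le \|\nabla^2 f\|_{\op,\infty} rh$ yields $|G(x,\theta,r)-G(y,\theta,r)|\le C\|\nabla^2 f\|_{\op,\infty} rh$. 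Since $\alpha>1$,
\begin{align*}
\int_h^{+\infty} \frac{\|\nabla^2 f\|_{\op,\infty} rh}{r^{1+\alpha}}\dif r = \frac{\|\nabla^2 f\|_{\op,\infty}}{\alpha-1}h^{2-\alpha},
\end{align*}
and integrating against $\nu$ gives the claim.

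For parts \textbf{(ii)} and \textbf{(iii)}, my strategy is first to prove a base inequality for $\|\mathcal{L}^{\alpha,\nu} f\|_\infty$ and then iterate via the commutation relation. When $\alpha\in(0,1)$ ($k_\alpha\equiv 0$), splitting $L_\theta^\alpha f(x)$ at $r=1$ and bounding $|f(x+r\theta)-f(x)|$ by $\|\nabla f\|_{\op,\infty} r$ for $r\le 1$ and by $2\|f\|_\infty$ for $r>1$ yields $\|\mathcal{L}^{\alpha,\nu}f\|_\infty\le C(\|f\|_\infty+\|\nabla f\|_{\op,\infty})$. Applying this with $g:=\mathcal{L}^{\alpha,\nu}f$ in place of $f$, together with $\nabla g = \mathcal{L}^{\alpha,\nu}\nabla f$ giving $\|\nabla g\|_{\op,\infty}\le C(\|\nabla f\|_{\op,\infty}+\|\nabla^2 f\|_{\op,\infty})$, proves (ii). When $\alpha=1$ ($k_1=\mathbf{1}_{(0,1]}$), splitting at $r=1$ and using the second-order Taylor remainder on $\{r\le 1\}$ (valid since the drift is present there) and $|f(x+r\theta)-f(x)|\le 2\|f\|_\infty$ on $\{r>1\}$ gives the base inequality $\|\mathcal{L}^{1,\nu}f\|_\infty\le C(\|f\|_\infty+\|\nabla^2 f\|_{\op,\infty})$. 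Iterating with $g:=\mathcal{L}^{1,\nu}f$ and using $\nabla^2 g = \mathcal{L}^{1,\nu}\nabla^2 f$ to bound $\|\nabla^2 g\|_{\op,\infty}\le C(\|\nabla^2 f\|_{\op,\infty}+\|\nabla^4 f\|_{\op,\infty})$ yields (iii).

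The only genuinely delicate step is the choice of bound on $\{r>h\}$ in part (i): naive estimates like $|G(x,\theta,r)|\le 2\|f\|_\infty + r\|\nabla f\|_{\op,\infty}$ would introduce lower-order norms that we do not wish to track, so the double finite-difference identity expressed purely through $\nabla^2 f$ is what makes the proof go through with the stated constant. The rest is routine splitting and integration.
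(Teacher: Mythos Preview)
Your proposal is correct and follows essentially the same approach as the paper's proof: in (i) both you and the paper split the radial integral at $|x-y|$ and bound each piece purely in terms of $\|\nabla^2 f\|_{\op,\infty}$ via second-order Taylor/finite-difference identities; in (ii) and (iii) both arguments first establish the base bound on $\|\mathcal{L}^{\alpha,\nu}f\|_\infty$ by splitting at $r=1$, then iterate using the commutation $\nabla^\kappa \mathcal{L}^{\alpha,\nu} f = \mathcal{L}^{\alpha,\nu}\nabla^\kappa f$. The only cosmetic difference is that the paper writes the Taylor remainders as explicit double integrals throughout, whereas you package them as $G(x,\theta,r)$ and invoke the mean-value theorem.
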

\begin{proof}
    (i) When $\alpha\in (1,2)$, by (\ref{mYgenerator}) and Taylor expansion, one has
    \begin{align*}
        &\mathrel{\phantom{=}} \mathcal{L}^{\alpha,\nu}f(x)-\mathcal{L}^{\alpha,\nu}f(y)\\
        &=d_{\alpha} \int_{\mathbb{S}^{d-1}} \nu(\dif \theta) \int_{0}^{+\infty} \frac{\int_{0}^{1} \nabla_{r\theta}f(x+zr\theta)-\nabla_{r\theta} f(x)- \nabla_{r\theta}f(y+zr\theta)+\nabla_{r\theta} f(y) \dif z}{r^{1+\alpha}} \dif r\\
        &=d_{\alpha} \int_{\mathbb{S}^{d-1}} \nu(\dif \theta) \int_{|x-y|}^{+\infty} \frac{\int_{0}^{1}\int_{0}^{1} \nabla_{x-y}\nabla_{\theta}f(y+z_{2}(x-y)+z_{1}r\theta)-\nabla_{x-y}\nabla_{\theta} f(y+z_{2}(x-y)) \dif z_{2}\dif z_{1}}{r^{\alpha}} \dif r\\
        &\quad +d_{\alpha} \int_{\mathbb{S}^{d-1}} \nu(\dif \theta) \int_{0}^{|x-y|} \frac{\int_{0}^{1}\int_{0}^{1} \nabla_{z_{1}\theta}\nabla_{\theta}f(x+z_{2}z_{1}r\theta)-\nabla_{z_{1}\theta}\nabla_{\theta}f(y+z_{2}z_{1}r\theta) \dif z_{2}\dif z_{1}}{r^{\alpha-1}} \dif r,
    \end{align*}
    which yields that
    \begin{align*}
        &\mathrel{\phantom{=}} \left|\mathcal{L}^{\alpha,\nu}f(x)-\mathcal{L}^{\alpha,\nu}f(y)\right|\\
        &\le C \|\nabla^{2}f\|_{\op,\infty}\left[|x-y| \int_{\mathbb{S}^{d-1}} \nu(\dif \theta) \int_{|x-y|}^{+\infty} \frac{1}{r^{\alpha}} \dif r
        +\int_{\mathbb{S}^{d-1}} \nu(\dif \theta) \int_{0}^{|x-y|} \frac{1}{r^{\alpha-1}} \dif r \right]\\
        &\le C\|\nabla ^{2}f\|_{\op,\infty}|x-y|^{2\alpha}.
    \end{align*}
    
    \noindent (ii) For $\alpha\in (0,1)$, it follows from (\ref{mYgenerator}) and Taylor expansion that for any $x\in \mathbb{R}^{d}$,
    \begin{align*}
        \mathcal{L}^{\alpha,\nu} f(x)
        &=d_{\alpha} \int_{\mathbb{S}^{d-1}} \nu(\dif \theta) \int_{0}^{+\infty} \frac{f(x+r\theta)-f(x)}{r^{1+\alpha}} \, \dif r\\
        &=d_{\alpha} \int_{\mathbb{S}^{d-1}} \nu(\dif \theta) \left( \int_{1}^{+\infty} \frac{f(x+r\theta)-f(x)}{r^{1+\alpha}} \, \dif r + \int_{0}^{1} \frac{\int_{0}^{1}\nabla_{\theta} f(x+zr\theta) \, \dif z}{r^{\alpha}} \, \dif r \right).
    \end{align*}
    As a consequence, we then have
    \begin{align*}
        \|\mathcal{L}^{\alpha,\nu} f\|_{\infty}\le C\left(\|f\|_{\infty}+\|\nabla f\|_{\op,\infty} \right).
    \end{align*}
    At the same time, since $f\in \mathcal{C}_b^2\left(\mathbb{R}^d\right)$, it can be verified that for any $z,x\in \mathbb{R}^{d}$,
    \begin{align*}
        \nabla_{z}\mathcal{L}^{\alpha,\nu} f(x)=d_{\alpha} \int_{\mathbb{S}^{d-1}} \nu(\dif \theta) \int_{0}^{+\infty} \frac{\nabla_{z}f(x+r\theta)-\nabla_{z}f(x)}{r^{1+\alpha}} \dif r.
    \end{align*}
    Hence, a similar argument yields that
    \begin{align*}
        \|\nabla\mathcal{L}^{\alpha,\nu} f\|_{\op,\infty}\le C\left(\|\nabla f\|_{\infty}+\|\nabla^{2}f\|_{\op,\infty}  \right).
    \end{align*}
    Now, combining these two estimates gives us
    \begin{align*}
        \|\mathcal{L}^{\alpha,\nu}\mathcal{L}^{\alpha,\nu}f\|_{\infty}
        &\le C\left( \|\mathcal{L}^{\alpha,\nu}f\|_{\infty}+\|\nabla\mathcal{L}^{\alpha,\nu}f\|_{\op,\infty} \right) \\
        &\le C\left( \|f\|_{\infty}+\|\nabla f\|_{\op,\infty}+\|\nabla^{2}f\|_{\op,\infty} \right),
    \end{align*}
    which is the desired result.
    
    \noindent (iii) Now, we turn to consider the $\alpha=1$ case. Again, by (\ref{mYgenerator}) and Taylor expansion, one can deduce that
    \begin{align*}
        \mathcal{L}^{1,\nu} f(x)
        &=d_{\alpha} \int_{\mathbb{S}^{d-1}} \nu(\dif \theta) \int_{0}^{+\infty} \frac{f(x+r\theta)-f(x)-\nabla_{r\theta}f(x) \mathbf{1}_{(0,1]}(r)}{r^{2}} \dif r \\
        &=d_{\alpha} \int_{\mathbb{S}^{d-1}} \nu(\dif \theta) \int_{0}^{1} \dif r \int_{0}^{1} \dif z_1 \int_{0}^{1} \nabla_{z_{1}\theta}\nabla_{\theta}f(x+z_{1}z_{2}r\theta) \, \dif z_{2} \\
        &\quad +d_{\alpha} \int_{\mathbb{S}^{d-1}} \nu(\dif \theta) \int_{1}^{+\infty} \frac{f(x+r\theta)-f(x)}{r^{2}} \dif r.
    \end{align*}
    So it follows that
    \begin{align*}
        \| \mathcal{L}^{1,\nu} f\|_{\infty}\le  C\left(\|f\|_{\infty}+\|\nabla^{2}f\|_{\op,\infty}  \right).
    \end{align*}
    What's more, a similar argument also implies that for $\kappa =1,2$,
    \begin{align*}
        \|\nabla^{\kappa}\mathcal{L}^{1,\nu} f\|_{\op,\infty} \le C\left(\|\nabla^{\kappa}f\|_{\op,\infty}+\|\nabla^{\kappa+2}f\|_{\op,\infty}  \right).
    \end{align*}
    Then combining the above results gives us
    \begin{align*}
        \| \mathcal{L}^{1,\nu}\mathcal{L}^{1,\nu} f\|_{\infty}
        &\le C\left(\|\mathcal{L}^{1,\nu}f\|_{\infty}+\frac{1}{n} \|\nabla^{2}\mathcal{L}^{1,\nu}f\|_{\op,\infty}  \right)\le C\left(\|f\|_{\infty}+\|\nabla^{2}f\|_{\op,\infty}+\|\nabla^{4}f\|_{\op,\infty} \right),
    \end{align*}
    and the proof is complete.
\end{proof}

\begin{lemma}\label{corollary:m one step}
    Under the setting of Lemma \ref{mgeneratorerr}, we have the following result
    \begin{align*}
        &\mathrel{\phantom{=}} \left|\mathbb{E} \int_{0}^{\frac{1}{n}} \mathcal{L}^{\alpha,\nu}f(\widehat{Y}_{s}+x)-\mathcal{L}^{\alpha,\nu}f(x) \dif s \right| \\
        &\le \begin{cases}
            C\|\nabla^{2}f\|_{\op,\infty} n^{-\frac{2}{\alpha}}, & \alpha\in (1,2); \\            C\left(\|f\|_{\infty}+\|\nabla^{2}f\|_{\op,\infty}+\|\nabla^{4}f\|_{\op,\infty} \right) n^{-2}, & \alpha=1;\\
            C\left(\|f\|_{\infty}+\|\nabla f\|_{\op,\infty}+\|\nabla^{2}f\|_{\op,\infty} \right)n^{-2}, &\alpha\in (0,1).
        \end{cases}
    \end{align*}
\end{lemma}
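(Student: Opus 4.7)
The plan is to handle the two regimes $\alpha\in(1,2)$ and $\alpha\in(0,1]$ separately, since Lemma \ref{mgeneratorerr} supplies two qualitatively different tools for them. The common structural input in both cases is the self-similarity $\widehat{Y}_{s}\stackrel{d}{=}s^{1/\alpha}\widehat{Y}_{1}$ of the strictly $\alpha$-stable Lévy process, which lets every $s$-integral reduce to an explicit beta-type integral.

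For $\alpha\in(1,2)$ I would apply part (i) of Lemma \ref{mgeneratorerr} directly to obtain the pointwise bound $|\mathcal{L}^{\alpha,\nu}f(x+\widehat{Y}_{s})-\mathcal{L}^{\alpha,\nu}f(x)|\le C\|\nabla^{2}f\|_{\op,\infty}|\widehat{Y}_{s}|^{2-\alpha}$. Taking expectation and using self-similarity gives $\E|\widehat{Y}_{s}|^{2-\alpha}=s^{(2-\alpha)/\alpha}\E|\widehat{Y}_{1}|^{2-\alpha}$, and this moment is finite because $2-\alpha<\alpha$ (a standard fact for strictly $\alpha$-stable laws; one may also read it directly off the Lévy measure $r^{-1-\alpha}\dif r$ on $(0,\infty)$, which is integrable against $r^{2-\alpha}$ at infinity since $\alpha>1$, and against $r^{2-\alpha}$ at $0$ always). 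Integrating the resulting power of $s$ over $[0,1/n]$ produces a factor of order $n^{-2/\alpha}$, which is exactly the target bound.

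For $\alpha\in(0,1]$ the Hölder route from Lemma \ref{mgeneratorerr}(i) is not available, so I would instead iterate via Dynkin's formula: for $g\in\mathcal{C}_{b}^{2}(\R^{d})$, $\E g(x+\widehat{Y}_{s})-g(x)=\int_{0}^{s}\E\mathcal{L}^{\alpha,\nu}g(x+\widehat{Y}_{u})\,\dif u$. Applied with $g=\mathcal{L}^{\alpha,\nu}f$ (which belongs to $\mathcal{C}_{b}^{2}$ by the intermediate estimates worked out inside the proof of Lemma \ref{mgeneratorerr}), this turns the quantity to bound into $\int_{0}^{s}\E[\mathcal{L}^{\alpha,\nu}\mathcal{L}^{\alpha,\nu}f(x+\widehat{Y}_{u})]\,\dif u$, whose absolute value is trivially $\le s\|\mathcal{L}^{\alpha,\nu}\mathcal{L}^{\alpha,\nu}f\|_{\infty}$. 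Plugging in the uniform bounds from parts (ii)--(iii) of Lemma \ref{mgeneratorerr} (which is why $\|\nabla^{4}f\|_{\op,\infty}$ enters when $\alpha=1$) and integrating $s$ over $[0,1/n]$ yields $\frac{1}{2}n^{-2}$, producing the advertised $n^{-2}$ rate in both subcases.

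The main thing to check carefully, rather than an actual obstacle, is the Fubini-type interchange that underlies Dynkin's formula for $g=\mathcal{L}^{\alpha,\nu}f$: this is legitimate because $\mathcal{L}^{\alpha,\nu}f$ and $\mathcal{L}^{\alpha,\nu}\mathcal{L}^{\alpha,\nu}f$ are both bounded (exactly what Lemma \ref{mgeneratorerr} delivers), so the integrands in all the expectations are dominated by constants and the standard martingale representation $f(x+\widehat{Y}_{t})-f(x)-\int_{0}^{t}\mathcal{L}^{\alpha,\nu}f(x+\widehat{Y}_{u})\,\dif u$ is a true martingale. Beyond this routine verification, the proof is purely computational and reduces in each case to a single one-dimensional power integral over $[0,1/n]$.
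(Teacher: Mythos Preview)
Your proposal is correct and follows essentially the same approach as the paper: for $\alpha\in(1,2)$ you invoke the H\"older bound from Lemma \ref{mgeneratorerr}(i), use self-similarity to reduce to $\int_{0}^{1/n}s^{(2-\alpha)/\alpha}\,\dif s$, and observe that $\E|\widehat{Y}_{1}|^{2-\alpha}<\infty$ since $2-\alpha<\alpha$; for $\alpha\in(0,1]$ you iterate Dynkin's formula to write the integrand as $\int_{0}^{s}\E[\mathcal{L}^{\alpha,\nu}\mathcal{L}^{\alpha,\nu}f(x+\widehat{Y}_{u})]\,\dif u$ and then apply parts (ii)--(iii) of Lemma \ref{mgeneratorerr}. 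This matches the paper's argument line for line.
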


\begin{proof}
    For $\alpha\in (1,2)$, using (i) of Lemma \ref{mgeneratorerr} yields that
    \begin{align*}
        \mathrel{\phantom{=}} \left|\mathbb{E} \int_{0}^{\frac{1}{n}} \mathcal{L}^{\alpha,\nu}f(\widehat{Y}_{s}+x)-\mathcal{L}^{\alpha,\nu}f(x) \dif s \right| 
        &\le \mathbb{E} \int_{0}^{\frac{1}{n}} \left|\mathcal{L}^{\alpha,\nu}f(\widehat{Y}_{s}+x)-\mathcal{L}^{\alpha,\nu}f(x)\right| \dif s \\
        &\le   C\|\nabla^{2}f\|_{\op,\infty}\int_{0}^{\frac{1}{n}} \mathbb{E}\left|s^{\frac{1}{\alpha}}\hat{Y_{1}}\right|^{2-\alpha} \dif s\\
        &\le C\|\nabla^{2}f\|_{\op,\infty} \int_{0}^{\frac{1}{n}} s^{\frac{2-\alpha}{\alpha}} \dif s \le C\|\nabla^{2}f\|_{\op,\infty}n^{-\frac{2}{\alpha}}.
    \end{align*}
    On the other hand, it is clear that
    \begin{align*}
        \left|\mathbb{E} \int_{0}^{\frac{1}{n}} \mathcal{L}^{\alpha,\nu}f(\widehat{Y}_{s}+x)-\mathcal{L}^{\alpha,\nu}f(x) \dif s \right| 
        =\left| \int_{0}^{\frac{1}{n}} \dif s \int_{0}^{s} \mathbb{E}\mathcal{L}^{\alpha,\nu}\mathcal{L}^{\alpha,\nu}f(\widehat{Y}_{u}+x) \dif u \right| \le n^{-2} \|\mathcal{L}^{\alpha,\nu}\mathcal{L}^{\alpha,\nu}f\|_{\infty}.
    \end{align*}
    Hence the desired result then follows from an application of Lemma \ref{mgeneratorerr} (ii) and (iii).
\end{proof}

\begin{lemma} \label{Lemma:m one step}
    Let $X$ be a $d$-dimensional random vector satisfying Assumption \ref{A:mX} with $\alpha\in (0,2)$, $\gamma > 0$. Then there exists a constant $C$ such that for any $a\in (0,A^{-1 / \alpha}\wedge 1), x\in \mathbb{R}^{d}$ and $f\in \mathcal{C}_b^2\left(\mathbb{R}^d\right)$, it holds that:
    
    \noindent (i) If $\alpha\in (1,2)$, we have
    \begin{align*}
        &\mathrel{\phantom{=}} \left| \mathbb{E}\left[f(x+aX)-f(x)-\langle aX,\nabla f(x) \rangle\right]-\frac{A\alpha a^{\alpha}}{d_{\alpha}}\mathcal{L}^{\alpha,\nu}f(x) \right|\\
        &\le C\left(a^{2} \|\nabla^{2} f\|_{\op,\infty}\int_{\mathbb{S}^{d-1}} \nu(\dif \theta) \int_{A^{\frac{1}{\alpha}}}^{a^{-1}} \frac{|\epsilon(r,\theta)|}{r^{\alpha-1}} \dif r + a^{\alpha} \|\nabla f\|_{\op,\infty} \sup_{\theta\in \mathbb{S}^{d-1},r\ge a^{-1}}\left|\epsilon(r,\theta) \right| \right).
    \end{align*}
    
    \noindent (ii) If $\alpha\in (0,1)$ and $\gamma\in (1-\alpha,+\infty)$, we have
    \begin{align*}
        &\mathrel{\phantom{=}} \left| \mathbb{E}\left[f(x+aX)-f(x)\right]-\frac{A\alpha a^{\alpha}}{d_{\alpha}}\mathcal{L}^{\alpha,\nu}f(x) \right|\\
        &\le  Ca^{2}\|\nabla^{2}f\|_{\op,\infty}\left(1+ \int_{\mathbb{S}^{d-1}} \nu(\dif \theta) \int_{A^{\frac{1}{\alpha}}}^{a^{-1}} \frac{|\epsilon(r,\theta)|}{r^{\alpha-1}} \dif r \right)\\
        &\quad +Ca\|\nabla f\|_{\op,\infty}\left( \left| \int_{\mathbb{S}^{d-1}} \nu(\dif \theta) \int_{0}^{a^{-1}}  \frac{\theta \epsilon(r,\theta)}{r^{\alpha}}\dif r \right| + \int_{\mathbb{S}^{d-1}} \nu(\dif \theta) \int_{a^{-1}}^{+\infty} \frac{|\epsilon(r,\theta)|}{r^{\alpha}} \dif r \right),
    \end{align*}
    while if $\gamma\in (0,1-\alpha]$, we have
    \begin{align*}
        &\mathrel{\phantom{=}} \left| \mathbb{E}\left[f(x+aX)-f(x)\right]-\frac{A\alpha a^{\alpha}}{d_{\alpha}}\mathcal{L}^{\alpha,\nu}f(x) \right|\\
        &\le Ca^{2}\|\nabla^{2}f\|_{\op,\infty}\left(1+ \int_{\mathbb{S}^{d-1}} \nu(\dif \theta) \int_{A^{\frac{1}{\alpha}}}^{a^{-1}} \frac{|\epsilon(r,\theta)|}{r^{\alpha-1}} \dif r \right)\\
        &\quad +Ca\|\nabla f\|_{\op,\infty} \left|\int_{\mathbb{S}^{d-1}} \nu(\dif \theta) \int_{0}^{a^{-1}} \frac{\theta \epsilon(r,\theta)}{r^{\alpha}} \dif r \right| \\
        &\quad + C a^{\frac{\alpha}{1 - \gamma}} \norm{f}_{\infty} + C a \norm{\nabla f}_{\op,\infty} \int_{\mathbb{S}^{d-1}} \nu(\dif \theta) \int_{a^{-1}}^{a^{-\frac{1}{1 - \gamma}}} \frac{|\epsilon(r,\theta)|}{r^{\alpha}} \dif r.
    \end{align*}
    
    \noindent (iii) If $\alpha=1$, we have
    \begin{align*}
    &\mathrel{\phantom{=}} \left| \mathbb{E}\left[f(x+aX)-f(x)- \mathbf{1}_{(0,1]}(a|X|)\langle aX,\nabla f(x) \rangle\right]-\frac{A\alpha a^{\alpha}}{d_{\alpha}}\mathcal{L}^{\alpha,\nu}f(x) \right|\\
    &\le C a \left(\|f\|_{\infty} +\|\nabla f\|_{\op,\infty}\right) \left|\int_{\mathbb{S}^{d-1}} \epsilon(a^{-1},\theta) \nu(\dif \theta) \right| \\
    &\quad + C a \|\nabla f\|_{\op,\infty} \int_{\mathbb{S}^{d-1}} \nu(\dif \theta) \int_{a^{-1}}^{+\infty} \frac{|\epsilon(r,\theta)|}{r} \dif r \\
    &\quad +C a^2 \|\nabla^{2}f\|_{\op,\infty} \left( 1 + \int_{\mathbb{S}^{d-1}} \nu(\dif \theta) \int_{A}^{a^{-1}} \left|\epsilon(r,\theta)\right| \dif r \right).
    \end{align*}
\end{lemma}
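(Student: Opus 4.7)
The plan is to represent $\mathbb{E}[f(x+aX)]$ through the polar decomposition of $X$ prescribed by Assumption \ref{A:mX} and compare it to the Lévy--Khintchine integral \eqref{mYgenerator}. Write $\Phi(r,\theta) = \frac{A+\epsilon(r,\theta)}{r^{\alpha}}$, so that the assumption gives $\mathbb{P}(|X|\ge r,\,X/|X|\in B) = \int_{B}\Phi(r,\theta)\,\nu(\dif\theta)$ on the relevant range of $r$. For any $C^{1}$ function $h(r,\theta)$ with $h(0,\theta)=0$ and $h\Phi\to 0$ at infinity, an integration by parts in $r$ against the Stieltjes measure $-d_{r}\Phi$ yields
\begin{align*}
    \mathbb{E}[h(|X|,X/|X|)] = \int_{\mathbb{S}^{d-1}}\nu(\dif\theta)\left\{h(r_{0}(\theta),\theta) + \int_{r_{0}(\theta)}^{+\infty}\partial_{r}h(r,\theta)\,\frac{A+\epsilon(r,\theta)}{r^{\alpha}}\,\dif r\right\},
\end{align*}
where $r_{0}(\theta)$ is the lower endpoint of the support of $|X|$ on ray $\theta$ (of order $A^{1/\alpha}$). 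Taking $h(r,\theta)=f(x+ar\theta)-f(x)-k_{\alpha}(ar)\langle ar\theta,\nabla f(x)\rangle$ with the appropriate $k_{\alpha}$ for each case of $\alpha$, so that $h(0,\theta)=0$, reduces matters to comparing this representation with the analogous integral for $\mathcal{L}^{\alpha,\nu}f(x)$ after the change of variables $s=ar$.

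The splitting $\frac{A+\epsilon}{r^{\alpha}}=\frac{A}{r^{\alpha}}+\frac{\epsilon}{r^{\alpha}}$ decouples the principal contribution from the error. For the principal part, a second integration by parts converts $\frac{A}{r^{\alpha}}$ into $\frac{A\alpha}{r^{1+\alpha}}$; combined with the change of variables $s=ar$ this identifies the piece with $\frac{A\alpha a^{\alpha}}{d_{\alpha}}\mathcal{L}^{\alpha,\nu}f(x)$ up to boundary discrepancies at $r\sim A^{1/\alpha}$ coming from $\Phi$ being capped at $1$ (controlled by the Taylor estimate $|h(r,\theta)|\le \tfrac{1}{2}(ar)^{2}\|\nabla^{2}f\|_{\op,\infty}$, so $O(a^{2}\|\nabla^{2}f\|_{\op,\infty})$) and, only for $\alpha=1$, a boundary term at $r=a^{-1}$ where the truncation in $k_{1}$ switches off, producing the $|\int_{\mathbb{S}^{d-1}}\epsilon(a^{-1},\theta)\nu(\dif\theta)|$ contribution of (iii). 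For the $\epsilon$ piece, split the radial integral at $r=a^{-1}$: on the inner range $r\in[A^{1/\alpha},a^{-1}]$ the second-order Taylor bound $|\partial_{r}h(r,\theta)|\le Ca^{2}r\|\nabla^{2}f\|_{\op,\infty}$ gives the $a^{2}\|\nabla^{2}f\|_{\op,\infty}\int_{A^{1/\alpha}}^{a^{-1}}\frac{|\epsilon|}{r^{\alpha-1}}\dif r$ term; on the outer range $r\ge a^{-1}$ the first-order bound $|\partial_{r}h|\le Ca\|\nabla f\|_{\op,\infty}$ together with $\int_{a^{-1}}^{\infty}r^{-\alpha}\dif r\sim a^{\alpha-1}$ produces the $a^{\alpha}\|\nabla f\|_{\op,\infty}\sup_{r\ge a^{-1}}|\epsilon|$ term in (i) (and the analogous integrated tails for $\alpha=1$ and $\alpha\in(0,1)$).

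For $\alpha\in(0,1)$ no drift is subtracted in the statement, so the $\langle ar\theta,\nabla f(x)\rangle$ that would normally sit inside $h$ must be reinstated as the explicit ``centering'' correction $a\|\nabla f\|_{\op,\infty}\,\big|\int\nu(\dif\theta)\int_{0}^{a^{-1}}\frac{\theta\,\epsilon(r,\theta)}{r^{\alpha}}\dif r\big|$ appearing in (ii); this term measures the mismatch between the intrinsic centering of $X$ and the antisymmetric centering built into $\mathcal{L}^{\alpha,\nu}$. The main obstacle is the sub-case $\alpha\in(0,1)$, $\gamma\in(0,1-\alpha]$: the naive outer bound gives $Ca\|\nabla f\|_{\op,\infty}\int_{a^{-1}}^{\infty}\frac{|\epsilon|}{r^{\alpha}}\dif r\sim a^{\alpha+\gamma}$, which is strictly larger than the order claimed when $\alpha+\gamma<1$. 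I resolve this by further truncating the outer integral at $r=a^{-1/(1-\gamma)}$: on the middle range $[a^{-1},a^{-1/(1-\gamma)}]$ keep the first-order estimate (producing the $\int_{a^{-1}}^{a^{-1/(1-\gamma)}}\frac{|\epsilon|}{r^{\alpha}}\dif r$ term in (ii)), while on the tail $r\ge a^{-1/(1-\gamma)}$ bound $|h|\le 2\|f\|_{\infty}$ and use the tail estimate $\mathbb{P}(|X|\ge a^{-1/(1-\gamma)})\le Ca^{\alpha/(1-\gamma)}$, producing the $Ca^{\alpha/(1-\gamma)}\|f\|_{\infty}$ term. The remaining technical burden is purely bookkeeping: tracking all boundary contributions so that, in each case of $\alpha$, the truncation in $k_{\alpha}$ is compatible with the polar representation and the principal part reproduces $\frac{A\alpha a^{\alpha}}{d_{\alpha}}\mathcal{L}^{\alpha,\nu}f(x)$ up to the listed error terms.
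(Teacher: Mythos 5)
Your proposal is correct and follows essentially the same approach as the paper: polar decomposition plus integration by parts against the radial survival function, splitting $(A+\epsilon)/r^{\alpha}$ into the pure power-law piece (which reproduces $\frac{A\alpha a^{\alpha}}{d_{\alpha}}\mathcal{L}^{\alpha,\nu}f$) and the $\epsilon$ piece (handled by radial truncation at $a^{-1}$, with the extra cut at $a^{-1/(1-\gamma)}$ when $\gamma\le 1-\alpha$). The only cosmetic difference is that the paper packages the power-law part as the law of an auxiliary random vector $\widetilde{X}$ supported on $\lvert\widetilde{X}\rvert\ge A^{1/\alpha}$ and then compares $F_{\theta,X}-F_{\widetilde{X}}$, which is precisely your $\epsilon/r^{\alpha}$ piece with the cap at $r\sim A^{1/\alpha}$ made explicit.
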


\begin{proof}
    By (\ref{mYgenerator}), it is clear that
    \begin{align*}
        \frac{A\alpha a^{\alpha}}{d_{\alpha}}\mathcal{L}^{\alpha,\nu}f(x)
        &=A \alpha  \int_{\mathbb{S}^{d-1}} \nu(\dif \theta) \int_{0}^{+\infty} \frac{f(x+ar\theta)-f(x)-k_{\alpha}(ar)\langle \nabla f(x), ar\theta\rangle}{r^{1+\alpha}} \dif r\\
        &=A \alpha  \int_{\mathbb{S}^{d-1}} \nu(\dif \theta) \int_{A^{\frac{1}{\alpha}}}^{+\infty} \frac{f(x+ar\theta)-f(x)-k_{\alpha}(ar)\langle \nabla f(x), ar\theta\rangle}{r^{1+\alpha}} \dif r+\mathcal{R},
    \end{align*}
    where the first equality is a consequence of change of variable and the remainder term $\mathcal{R}$ is given by
    \begin{align*}
        \mathcal{R}=A \alpha  \int_{\mathbb{S}^{d-1}} \nu(\dif \theta) \int_{0}^{A^{\frac{1}{\alpha}}} \frac{f(x+ar\theta)-f(x)-k_{\alpha}(ar)\langle \nabla f(x), ar\theta\rangle}{r^{1+\alpha}} \dif r.
    \end{align*}
    Notice that 
$A \alpha  \int_{\mathbb{S}^{d-1}} \nu(\dif \theta) \int_{A^{\frac{1}{\alpha}}}^{+\infty} \frac{1}{r^{1+\alpha}} \dif r=1$,
    we can consider a $d$-dimensional random vector $\Tilde{X}$ which satisfies 
    \begin{align} \label{def:mtX}
        \mathbb{P}\left(|\Tilde{X}|>r,\frac{\Tilde{X}}{|\Tilde{X}|}\in B \right)
        &= \frac{A}{r^{\alpha}} \int_{B} \nu(\dif \theta) = \frac{A\nu(B)}{r^{\alpha}}, \quad \forall r\ge A^{\frac{1}{\alpha}}, B\in \mathscr{B}\left(\mathbb{S}^{d-1} \right) .
    \end{align}
    And it follows that
    \begin{align*}
        \frac{A\alpha a^{\alpha}}{d_{\alpha}}\mathcal{L}^{\alpha,\nu}f(x)=\mathbb{E}\left[f(x+a\Tilde{X})-f(x)-k_{\alpha}(a|\Tilde{X}|)\langle a\Tilde{X},\nabla f(x) \rangle\right]+\mathcal{R}.
    \end{align*}
    We denote $F_{\theta,X} (r) = 1 -\frac{A + \epsilon(r,\theta)}{r^{\alpha}}$ and $F_{\Tilde{X}}(r) = (1 - \frac{A}{r^{\alpha}}) \mathbf{1}_{[A^{1 / \alpha}, +\infty)} (r)$ for $r > 0$. Then by (\ref{def:mX}) and (\ref{def:mtX}), we can deduce that
    \begin{align*}
        &\mathrel{\phantom{=}} \mathbb{E} \left[f(x+aX)-f(x)-k_{\alpha}(a|X|)\langle aX,\nabla f(x) \rangle\right]-\frac{A\alpha a^{\alpha}}{d_{\alpha}} \mathcal{L}^{\alpha,\nu} f(x) \\
        &= \int_{\mathbb{S}^{d-1}} \nu(\dif \theta) \int_{0}^{+\infty} \left[f(x+ar\theta)-k_{\alpha}(ar)\langle ar\theta, \nabla f(x) \rangle\right] \dif \left[F_{\theta,X}(r)-F_{\Tilde{X}}(r) \right] - \mathcal{R}.
    \end{align*}
    
    \noindent (i) For $\alpha\in (1,2)$, since $k_{\alpha}(r)=1$ in this case, using integration by parts yields that
    \begin{align*}
        &\mathrel{\phantom{=}} \left| \int_{\mathbb{S}^{d-1}} \nu(\dif \theta) \int_{0}^{+\infty} \left[f(x+ar\theta)-\langle ar\theta, \nabla f(x) \rangle\right] \dif \left[F_{\theta,X}(r)-F_{\Tilde{X}}(r) \right] \right|\\
        &= \left| \int_{\mathbb{S}^{d-1}} \nu(\dif \theta) \int_{0}^{+\infty} \left[\nabla_{a\theta}f(x+ar\theta)-\nabla_{a\theta}f(x) \right]\left[F_{\theta,X}(r)-F_{\Tilde{X}}(r) \right] \dif r \right| \\
        &\le \left| \int_{\mathbb{S}^{d-1}} \nu(\dif \theta) \int_{0}^{A^{\frac{1}{\alpha}}} \left[ \nabla_{a\theta} f(x+ar\theta) - \nabla_{a\theta}f(x) \right]\left[F_{\theta,X}(r)-F_{\Tilde{X}}(r) \right] \dif r \right| \\
        &\quad +\left| \int_{\mathbb{S}^{d-1}} \nu(\dif \theta) \int_{A^{\frac{1}{\alpha}}}^{+\infty} \left[ \nabla_{a\theta} f(x+ar\theta) - \nabla_{a\theta}f(x) \right]\frac{\epsilon(r,\theta)}{r^{\alpha}} \dif r \right|.
    \end{align*}
    For the first term, it is clear that $\left|F_{\theta,X}(r)-F_{\Tilde{X}}(r) \right|\le 1$, so it follows from Taylor expansion that
    \begin{align*}
        \left| \int_{\mathbb{S}^{d-1}} \nu(\dif \theta) \int_{0}^{A^{\frac{1}{\alpha}}} \left[ \nabla_{a\theta}f(x+ar\theta)-\nabla_{a\theta}f(x) \right]\left[F_{\theta,X}(r)-F_{\Tilde{X}}(r) \right] \dif r \right|
        &\le Ca^{2}\|\nabla^{2} f\|_{\op,\infty}.
    \end{align*}
    On the other hand, we have
    \begin{align*}
        \left| \int_{\mathbb{S}^{d-1}} \nu(\dif \theta) \int_{a^{-1}}^{+\infty} \left[ \nabla_{a\theta}f(x+ar\theta)-\nabla_{a\theta}f(x) \right]\frac{\epsilon(r,\theta)}{r^{\alpha}} \dif r \right|
        &\le Ca^{\alpha} \|\nabla f\|_{\op,\infty} \sup_{\theta\in \mathbb{S}^{d-1}, r\ge a^{-1}}\left|\epsilon(r,\theta) \right|, \\
        \left| \int_{\mathbb{S}^{d-1}} \nu(\dif \theta) \int_{A^{\frac{1}{\alpha}}}^{a^{-1}} \left[\nabla_{a\theta}f(x+ar\theta)-\nabla_{a\theta}f(x) \right]\frac{\epsilon(r,\theta)}{r^{\alpha}} \dif r \right|
        &\le Ca^{2}\|\nabla^{2} f\|_{\op,\infty} \int_{\mathbb{S}^{d-1}} \nu(\dif \theta) \int_{A^{\frac{1}{\alpha}}}^{a^{-1}} \frac{|\epsilon(r,\theta)|}{r^{\alpha-1}} \dif r.
    \end{align*}
    What's more, for the remainder term $\mathcal{R}$, we also have
    \begin{align*}
        \left|\mathcal{R} \right|
        &=A \alpha  \left|\int_{\mathbb{S}^{d-1}} \nu(\dif \theta) \int_{0}^{A^{\frac{1}{\alpha}}} \frac{f(x+ar\theta)-f(x)-\langle \nabla f(x), ar\theta\rangle}{r^{1+\alpha}} \dif r \right|\\
        &=A \alpha  \left|\int_{\mathbb{S}^{d-1}} \nu(\dif \theta) \int_{0}^{A^{\frac{1}{\alpha}}} \dif r \int_{0}^{1} \dif z_1 \int_{0}^{1}  \frac{\nabla_{z_1 a r \theta}\nabla_{a r \theta} f ( x + z_{1} z_{2} a r \theta)}{r^{1 + \alpha}} \dif z_2 \right| \le Ca^{2} \|\nabla^{2} f\|_{\op,\infty}.
    \end{align*}
    Now, combining the estimates for each part gives us
    \begin{align*}
        &\mathrel{\phantom{=}} \left| \mathbb{E}\left[f(x+aX)-f(x)-\langle aX,\nabla f(x) \rangle\right]-\frac{A\alpha a^{\alpha}}{d_{\alpha}}\mathcal{L}^{\alpha,\nu}f(x) \right|\\
        &\le C\left(a^{2} \|\nabla^{2} f\|_{\op,\infty}\int_{\mathbb{S}^{d-1}} \nu(\dif \theta) \int_{A^{\frac{1}{\alpha}}}^{a^{-1}} \frac{|\epsilon(r,\theta)|}{r^{\alpha-1}} \dif r + a^{\alpha} \|\nabla f\|_{\op,\infty} \sup_{\theta\in \mathbb{S}^{d-1},r\ge a^{-1}}\left|\epsilon(r,\theta) \right| \right).
    \end{align*}
    
    \noindent (ii) Now we consider $\alpha\in (0,1)$ and $k_{\alpha}(r)=0$. Using integration by parts as before shows that 
    \begin{align} \label{eq:lemma4.4pr1}
        \begin{split}
        &\mathrel{\phantom{=}} \left| \int_{\mathbb{S}^{d-1}} \nu(\dif \theta) \int_{0}^{+\infty} f(x+ar\theta) \dif \left[F_{\theta,X}(r)-F_{\Tilde{X}}(r) \right] - \mathcal{R} \right|\\
        &\le \left| \int_{\mathbb{S}^{d-1}} \nu(\dif \theta) \int_{0}^{a^{-1}} \nabla_{a\theta}f(x+ar\theta)\left[F_{\theta,X}(r)-F_{\Tilde{X}}(r) \right] \dif r - \mathcal{R} \right|\\
        &\quad +\left| \int_{\mathbb{S}^{d-1}} \nu(\dif \theta) \int_{a^{-1}}^{+\infty} \nabla_{a\theta}f(x+ar\theta)\left[F_{\theta,X}(r)-F_{\Tilde{X}}(r) \right] \dif r \right|.
        \end{split}
    \end{align}
    For the first term on the right-hand side,
    \begin{align*}
        &\mathrel{\phantom{=}} \int_{\mathbb{S}^{d-1}} \nu(\dif \theta) \int_{0}^{a^{-1}} \nabla_{a\theta} f( x + a r \theta) \left[F_{\theta,X}(r)-F_{\Tilde{X}}(r) \right] \dif r - \mathcal{R} \\
        &= \int_{\mathbb{S}^{d-1}} \nu(\dif \theta) \int_{0}^{a^{-1}} [\nabla_{a\theta} f(x+ar\theta) -\nabla_{a\theta} f(x)] \left[F_{\theta,X}(r)-F_{\Tilde{X}}(r) \right] \dif r \\
        &\quad + a \left\langle \nabla f(x), \int_{\mathbb{S}^{d-1}} \nu(\dif \theta) \int_{0}^{a^{-1}}  \frac{\theta \epsilon(r,\theta)}{r^{\alpha}}\dif r \right\rangle \\
        &\quad + \int_{\mathbb{S}^{d-1}} \nabla_{a\theta} f(x) \nu(\dif \theta) \int_{0}^{A^{\frac{1}{\alpha}}} \left(1-\frac{A}{r^{\alpha}} \right) \dif r - \mathcal{R}.
    \end{align*}
    Combining $\mathcal{R} = A \alpha  \int_{\mathbb{S}^{d-1}} \nu(\dif \theta) \int_{0}^{A^{1 / \alpha}} \frac{f(x+ar\theta)-f(x)}{r^{1+\alpha}} \, \dif r$ and
 $\int_{0}^{A^{\frac{1}{\alpha}}} \left( 1-\frac{A}{r^{\alpha}} \right) \dif r
        = -\frac{\alpha A^{\frac{1}{\alpha}}}{1-\alpha}
        = -A \alpha \int_{0}^{A^{\frac{1}{\alpha}}} \frac{1}{r^{\alpha}} \dif r$,
    we have
    \begin{align*}
        &\mathrel{\phantom{=}} \abs{\int_{\mathbb{S}^{d-1}} \nabla_{a\theta} f(x) \nu(\dif \theta) \int_{0}^{A^{\frac{1}{\alpha}}} \left(1-\frac{A}{r^{\alpha}} \right) \dif r - \mathcal{R}} \\
        &= \abs{A \alpha \int_{\mathbb{S}^{d-1}} \nu(\dif \theta) \int_{0}^{A^{\frac{1}{\alpha}}} \frac{f(x+ar\theta)-f(x)-\nabla_{ar\theta}f(x)}{r^{1+\alpha}} \dif r} \\
        &= A \alpha a^2 \abs{\int_{\mathbb{S}^{d-1}} \nu(\dif \theta) \int_{0}^{A^{\frac{1}{\alpha}}} \dif r \int_{0}^{1} \dif z_1 \int_{0}^{1} \frac{\nabla_{z_{1}\theta}\nabla_{\theta}f(x + z_1 z_2 ar\theta)}{r^{\alpha-1}} \dif z_2} \leq C a^2 \norm{\nabla^2 f}_{\op, \infty}.
    \end{align*}
    Together with
    \begin{gather*}
        \begin{split}
        &\mathrel{\phantom{=}} \abs{ \int_{\mathbb{S}^{d-1}} \nu(\dif \theta) \int_{0}^{a^{-1}} [\nabla_{a\theta} f(x+ar\theta) -\nabla_{a\theta} f(x)] \left[F_{\theta,X}(r)-F_{\Tilde{X}}(r) \right] \dif r} \\
        &\leq C a^2 \norm{\nabla^2 f}_{\op, \infty} \int_{\mathbb{S}^{d-1}} \nu(\dif \theta) \int_{0}^{a^{-1}} r \abs{F_{\theta,X}(r)-F_{\Tilde{X}}(r)} \dif r \\
        &\leq C a^2 \norm{\nabla^2 f}_{\op, \infty} \left( 1 + \int_{\mathbb{S}^{d-1}} \nu(\dif \theta) \int_{A^{\frac{1}{\alpha}}}^{a^{-1}} \frac{\abs{\epsilon(r,\theta)}}{r^{\alpha - 1}} \dif r \right),
        \end{split} \\
        \abs{a \left\langle \nabla f(x), \int_{\mathbb{S}^{d-1}} \nu(\dif \theta) \int_{0}^{a^{-1}}  \frac{\theta \epsilon(r,\theta)}{r^{\alpha}}\dif r \right\rangle}
        \leq a \norm{\nabla f}_{\op, \infty} \abs{\int_{\mathbb{S}^{d-1}} \nu(\dif \theta) \int_{0}^{a^{-1}}  \frac{\theta \epsilon(r,\theta)}{r^{\alpha}}\dif r},
    \end{gather*}
    the first term on the right-hand side of \eqref{eq:lemma4.4pr1} can be estimated as
    \begin{align*}
        &\mathrel{\phantom{=}} \abs{\int_{\mathbb{S}^{d-1}} \nu(\dif \theta) \int_{0}^{a^{-1}} \nabla_{a\theta}f(x+ar\theta)\left[F_{\theta,X}(r)-F_{\Tilde{X}}(r) \right] \dif r - \mathcal{R}} \\
        &\leq C a^2 \norm{\nabla^2 f}_{\op, \infty} \left( 1 + \int_{\mathbb{S}^{d-1}} \nu(\dif \theta) \int_{A^{\frac{1}{\alpha}}}^{a^{-1}} \frac{\abs{\epsilon(r,\theta)}}{r^{\alpha - 1}} \dif r \right) \\
        &\quad + a \norm{\nabla f}_{\op, \infty} \abs{\int_{\mathbb{S}^{d-1}} \nu(\dif \theta) \int_{0}^{a^{-1}}  \frac{\theta \epsilon(r,\theta)}{r^{\alpha}}\dif r}.
    \end{align*}
    
    Now we turn to the second term on the right-hand side of \eqref{eq:lemma4.4pr1}. If $\gamma\in (1-\alpha, +\infty)$, it follows that
    \begin{align*}
        \left| \int_{\mathbb{S}^{d-1}} \nu(\dif \theta) \int_{a^{-1}}^{+\infty} \nabla_{a\theta}f(x+ar\theta)\left[F_{\theta,X}(r)-F_{\Tilde{X}}(r) \right] \dif r  \right|
        \le a\|\nabla f\|_{\op,\infty} \int_{\mathbb{S}^{d-1}} \nu(\dif \theta) \int_{a^{-1}}^{+\infty} \frac{|\epsilon(r,\theta)|}{r^{\alpha}} \dif r.
    \end{align*}
    If $\gamma\in (0,1-\alpha]$, using integration by parts and $|\epsilon(r,\theta)|\le K$ yields that
    \begin{align*}
        &\mathrel{\phantom{=}} \left| \int_{\mathbb{S}^{d-1}} \nu(\dif \theta) \int_{a^{-\frac{1}{1 - \gamma}}}^{+\infty} \nabla_{a\theta}f(x+ar\theta)\left[F_{\theta,X}(r)-F_{\Tilde{X}}(r) \right] \dif r  \right|\\
        &\le \norm{f}_{\infty} \int_{\mathbb{S}^{d-1}} \left|F_{\theta,X}(a^{-\frac{1}{1 - \gamma}}) - F_{\Tilde{X}}(a^{-\frac{1}{1 - \gamma}}) \right| \nu(\dif \theta) \\
        & \quad + \norm{f}_{\infty} \int_{\mathbb{S}^{d-1}} \nu(\dif \theta) \int_{a^{-\frac{1}{1 - \gamma}}}^{+\infty}\left[\dif F_{\theta,X}(r)+\dif F_{\Tilde{X}}(r) \right] \\
        &\le C a^{\frac{\alpha}{1 - \gamma}} \norm{f}_{\infty}.
    \end{align*}
    On the other hand,
    \begin{align*}
        &\mathrel{\phantom{=}} \left| \int_{\mathbb{S}^{d-1}} \nu(\dif \theta) \int_{a^{-1}}^{a^{-\frac{1}{1 - \gamma}}} \nabla_{a\theta}f(x+ar\theta)\left[F_{\theta,X}(r)-F_{\Tilde{X}}(r) \right] \dif r  \right|\\
        &\le C a\|\nabla f\|_{\op,\infty} \int_{\mathbb{S}^{d-1}} \nu(\dif \theta) \int_{a^{-1}}^{a^{-\frac{1}{1 - \gamma}}} \frac{|\epsilon(r,\theta)|}{r^{\alpha}} \dif r.
    \end{align*} 
    So we have
    \begin{align*}
        &\mathrel{\phantom{=}} \left| \int_{\mathbb{S}^{d-1}} \nu(\dif \theta) \int_{a^{-1}}^{+\infty} \nabla_{a\theta}f(x+ar\theta)\left[F_{\theta,X}(r)-F_{\Tilde{X}}(r) \right] \dif r \right| \\
        &\leq C a^{\frac{\alpha}{1 - \gamma}} \norm{f}_{\infty} + C a \|\nabla f\|_{\op,\infty} \int_{\mathbb{S}^{d-1}} \nu(\dif \theta) \int_{a^{-1}}^{a^{-\frac{1}{1 - \gamma}}} \frac{|\epsilon(r,\theta)|}{r^{\alpha}} \dif r,
    \end{align*}
    for $\gamma\in (0,1-\alpha]$. The desired result follows from the estimates of two terms on the right-hand side of \eqref{eq:lemma4.4pr1}.
    
    \noindent (iii) When $\alpha=1$ and $k_1 (r)=\mathbf{1}_{(0,1]}(r)$, recall that we impose $\int_{\mathbb{S}^{d-1}} \theta \nu(\dif \theta) = \mathbf{0}$ in this case. The same as before, we have
    \begin{align*}
        &\mathrel{\phantom{=}} \left| \int_{\mathbb{S}^{d-1}} \nu(\dif \theta) \int_{0}^{+\infty} \left[f(x+ar\theta)- \mathbf{1}_{(0,1]}(ar)\langle ar\theta, \nabla f(x) \rangle\right] \dif \left[F_{\theta,X}(r)-F_{\Tilde{X}}(r) \right] \right|\\
        &\le \left| \int_{\mathbb{S}^{d-1}} \nu(\dif \theta) \int_{0}^{a^{-1}} \left[f(x+ar\theta)-\langle ar\theta, \nabla f(x) \rangle\right] \dif \left[F_{\theta,X}(r)-F_{\Tilde{X}}(r) \right] \right|\\
        &\quad +\left| \int_{\mathbb{S}^{d-1}} \nu(\dif \theta) \int_{a^{-1}}^{+\infty} f(x+ar\theta)\dif \left[F_{\theta,X}(r)-F_{\Tilde{X}}(r) \right] \right|.
    \end{align*}
    Using integration by parts as in (i) yields that
    \begin{align*}
        &\mathrel{\phantom{=}} \left| \int_{\mathbb{S}^{d-1}} \nu(\dif \theta) \int_{0}^{a^{-1}} \left[f(x+ar\theta)-\langle ar\theta, \nabla f(x) \rangle\right] \dif \left[F_{\theta,X}(r)-F_{\Tilde{X}}(r) \right] \right|\\
        &\le C a \left(\|f\|_{\infty} +\|\nabla f\|_{\op,\infty}\right) \left|\int_{\mathbb{S}^{d-1}} \epsilon(a^{-1},\theta) \nu(\dif \theta) \right| \\
        &\quad + C a^2 \|\nabla^{2}f\|_{\op,\infty} \left( 1 + \int_{\mathbb{S}^{d-1}} \nu(\dif \theta) \int_{A}^{a^{-1}} \left|\epsilon(r,\theta)\right| \dif r \right),
    \end{align*}
    while
    \begin{align*}
        &\mathrel{\phantom{=}} \left| \int_{\mathbb{S}^{d-1}} \nu(\dif \theta) \int_{a^{-1}}^{+\infty} f(x+ar\theta)\dif \left[F_{\theta,X}(r)-F_{\Tilde{X}}(r) \right] \right|\\
        &\le C a \|f\|_{\infty} \left|\int_{\mathbb{S}^{d-1}} \epsilon(a^{-1},\theta) \nu(\dif \theta) \right| + C a \|\nabla f\|_{\op,\infty} \int_{\mathbb{S}^{d-1}} \nu(\dif \theta) \int_{a^{-1}}^{+\infty} \frac{|\epsilon(r,\theta)|}{r} \dif r.
    \end{align*}
    And the desired result follows.
\end{proof}

\begin{proof}[Proof of Lemma \ref{lemma:one step}]
    For any fixed $x \in \R^d$, notice that
    \begin{align*}
        Q_{1}f(x)-f(x)
        =\mathbb{E}f\left(x+\frac{X_1 -\omega_{n,\alpha}}{n^{1/\alpha}\sigma}  \right)-f(x)
        =\frac{1}{n} \mathcal{L}^{\alpha,\nu}f(x)+I+II,
    \end{align*}
    where $I,II$ is given by
    \begin{align*}
        I&=  \mathbb{E}\left[f \left(x+\frac{X_1}{n^{1/\alpha}\sigma} \right)-f(x)-k_{\alpha}\left(\frac{|X_1|}{n^{1/\alpha}\sigma} \right)\left\langle \frac{X_1}{n^{1/\alpha}\sigma},\nabla f(x) \right\rangle\right]-\frac{1}{n}\mathcal{L}^{\alpha,\nu}f(x)  ,\\
        II&= \mathbb{E}\left[f\left(x+\frac{X_1-\omega_{n,\alpha}}{n^{1/\alpha}\sigma}  \right)-f\left(x+\frac{X_1}{n^{1/\alpha}\sigma} \right)+k_{\alpha}\left(\frac{|X_1|}{n^{1/\alpha}\sigma} \right)\left\langle \frac{X_1}{n^{1/\alpha}\sigma},\nabla f(x) \right\rangle \right].
    \end{align*}
    On the other hand, for the $d$-dimensional $\alpha$-stable process $\widehat{Y}_{t}$ satisfying (\ref{def:mY}), one has
    \begin{align*}
        P_{1}f(x)-f(x)
        &=\mathbb{E}\left[f(x+\widehat{Y}_{\frac{1}{n}}) \right]-f(x)\\
        &=\frac{1}{n}\mathcal{L}^{\alpha,\nu}f(x) + \int_{0}^{\frac{1}{n}} \mathbb{E}\left[\mathcal{L}^{\alpha,\nu}f(x+\widehat{Y}_{s})-\mathcal{L}^{\alpha,\nu}f(x) \right]\dif s.
    \end{align*}
    Hence, it follows that
    \begin{align*}
        Q_{1}f(x)-P_{1}f(x) =-\int_{0}^{\frac{1}{n}} \mathbb{E}\left[\mathcal{L}^{\alpha,\nu}f(x+\widehat{Y}_{s})-\mathcal{L}^{\alpha,\nu}f(x) \right]\dif s+I+II.
    \end{align*}

    \noindent (i) For $\alpha\in (1,2)$, it is clear that $\abs{\omega_{n,\alpha}} \leq \mathbb{E} \abs{X_1} < +\infty$ and $k_{\alpha}=1$, so it follows from an application of Taylor's expansion that
    \begin{align*}
        \left|II \right|
        &=\left|\mathbb{E}\left[f\left(x+\frac{X_1-\omega_{n,\alpha}}{n^{1/\alpha}\sigma}  \right)-f\left(x+\frac{X_1}{n^{1/\alpha}\sigma} \right)+\left\langle \frac{\omega_{n,\alpha}}{n^{1/\alpha}\sigma},\nabla f(x) \right\rangle \right] \right| \le C\|\nabla^{2}f\|_{\op,\infty} n^{-\frac{2}{\alpha}}.
    \end{align*}
    Taking $a = \sigma^{-1} n^{-1 / \alpha}$ in Lemma \ref{Lemma:m one step}, we have
    \begin{align*}
        \abs{I} \leq C \left( \norm{\nabla f}_{\op,\infty} + \norm{\nabla^{2} f}_{\op,\infty} \right) n^{- 1 -\frac{(2 - \alpha) \land \gamma}{\alpha}} (\ln n)^{\mathbf{1}_{ \{ \gamma = 2 - \alpha \} }},
    \end{align*}
    in which we use the assumption $\abs{\epsilon(r,\theta)} \le K (1\wedge r^{-\gamma})$. The desired result follows from Lemma \ref{corollary:m one step} and above estimates of $I$, $II$.

    \noindent (ii) For $\alpha=1$, $\omega_{n,1} = \mathbb{E} \left[X_1 \mathbf{1}_{(0,\sigma n]}(|X_1|) \right]$ and $k_{1}(r)= \mathbf{1}_{(0,1]}(r)$. Notice that
    \begin{align*}
        \mathbb{E}\left[X_1 \mathbf{1}_{(1,\sigma n]}(|X_1|) \right]
        = \int_{\mathbb{S}^{d-1}} \nu (\dif \theta) \int_1^{\sigma n} r \theta \, \dif F_{\theta, X} (r)
        = \int_{\mathbb{S}^{d-1}} \theta \nu (\dif \theta) \int_1^{\sigma n} r \, \dif F_{\theta, X} (r),
    \end{align*}
    where $F_{\theta,X}(r)=1-\frac{A+\epsilon(r,\theta)}{r}$. By integration by parts,
    \begin{align*}
        \int_1^{\sigma n} r \, \dif F_{\theta, X} (r)
        &= \sigma n F_{\theta, X} (\sigma n) - F_{\theta, X} (1) - \int_1^{\sigma n} F_{\theta, X} (r) \, \dif r \\
        &= A \ln (\sigma n) + \epsilon (1, \theta) - \epsilon (\sigma n, \theta) + \int_1^{\sigma n} \frac{\epsilon (r, \theta)}{r} \, \dif r.
    \end{align*}
    Combining $\int_{\mathbb{S}^{d-1}} \theta \nu (\dif \theta) = \mathbf{0}$ and $\abs{\epsilon(r,\theta)} \le K (1\wedge r^{-\gamma})$, we have
    \begin{align*}
        \abs{\mathbb{E}\left[X_1 \mathbf{1}_{(1,\sigma n]}(|X_1|) \right]}
        = \abs{\int_{\mathbb{S}^{d-1}} \left( \epsilon (1, \theta) - \epsilon (\sigma n, \theta) + \int_1^{\sigma n} \frac{\epsilon (r, \theta)}{r} \, \dif r \right) \theta \nu (\dif \theta)}
        \leq C,
    \end{align*}
    which derives
    \begin{align*}
        \abs{\omega_{n,1}}
        \leq \abs{\mathbb{E} \left[X_1 \mathbf{1}_{(0,1]}(|X_1|) \right]} + \abs{\mathbb{E} \left[X_1 \mathbf{1}_{(1,\sigma n]}(|X_1|) \right]}
        \leq C, \quad \forall n \geq 1.
    \end{align*}
    What's more, \eqref{def:mX} implies the following results,
    \begin{gather*}
        \mathbb{P} (|X_1| > \sigma n ) \leq \int_{\mathbb{S}^{d-1}} \frac{A+\epsilon(\sigma n,\theta)}{\sigma n} \nu(\dif \theta)\le Cn^{-1}, \\
        \mathbb{E}\left[|X_1| \mathbf{1}_{[0,\sigma n]}(|X_1|) \right]
        \le \int_{0}^{\sigma n} \mathbb{P} (|X_1| \geq t) \dif t
        =\int_{0}^{\sigma n} \dif t \int_{\mathbb{S}^{d-1}} \frac{A+\epsilon(t,\theta)}{t} \nu(\dif \theta)
        \le C \ln n, \\
        \mathbb{E}\left[|X_1|^2 \mathbf{1}_{[0,\sigma n]}(|X_1|) \right]
        \le 2 \int_{0}^{\sigma n} t \mathbb{P} (|X_1| \geq t) \dif t
        = 2 \int_{0}^{\sigma n} \dif t \int_{\mathbb{S}^{d-1}} ( A+\epsilon(t,\theta) ) \nu(\dif \theta)
        \le C n.
    \end{gather*}
    
   Now we are ready to estimate $II$ for $\alpha = 1$. Consider that
    \begin{align} \label{eq:lemma2.1pr1}
        \begin{split}
        II
        &= \mathbb{E}\left[\int_{0}^{1} \left\langle \frac{\omega_{n,1}}{n\sigma},\nabla f(x)-\nabla f\left(x+ \frac{X_1-r\omega_{n,1}}{n\sigma}\right)\right\rangle \dif r \right] \\
        &= \int_{0}^{1} \mathbb{E} \left[\left\langle \frac{\omega_{n,1}}{n\sigma},\nabla f(x)-\nabla f\left(x+ \frac{X_1-r\omega_{n,1}}{n\sigma}\right)\right\rangle \mathbf{1}_{[0,\sigma n]}(|X_1|) \right] \dif r \\
        & \quad + \int_{0}^{1} \mathbb{E} \left[ \left\langle \frac{\omega_{n,1}}{n\sigma},\nabla f(x)-\nabla f\left(x+ \frac{X_1-r\omega_{n,1}}{n\sigma}\right)\right\rangle \mathbf{1}_{(\sigma n,+\infty)}(|X_1|) \right] \dif r.
        \end{split}
    \end{align}
    For any fixed $r \in [0,1]$, we denote $Z = X_1-r\omega_{n,1}$ for simplicity, then
    \begin{align*}
        &\mathrel{\phantom{=}} \mathbb{E} \left[\left\langle \frac{\omega_{n,1}}{n\sigma},\nabla f(x)-\nabla f\left(x+ \frac{X_1-r\omega_{n,1}}{n\sigma}\right)\right\rangle \mathbf{1}_{[0,\sigma n]}(|X_1|) \right] \\
        &= \frac{1}{n \sigma} \mathbb{E} \left[ \left( \nabla_{\omega_{n,1}} f(x) - \nabla_{\omega_{n,1}} f\left(x+ \frac{Z}{n\sigma}\right) \right) \mathbf{1}_{[0,\sigma n]}(|X_1|) \right] \\
        &= -\frac{1}{n \sigma} \mathbb{E} \left[ \int_0^1 \left\langle \frac{Z}{n\sigma}, \nabla \nabla_{\omega_{n,1}} f\left(x+ \frac{s Z}{n\sigma}\right) \right\rangle \dif s \mathbf{1}_{[0,\sigma n]}(|X_1|) \right] \\
        &= -\frac{1}{n \sigma} \mathbb{E} \left[ \int_0^1 \left\langle \frac{Z}{n\sigma}, \nabla \nabla_{\omega_{n,1}} f\left(x+ \frac{s Z}{n\sigma}\right) - \nabla \nabla_{\omega_{n,1}} f (x) \right\rangle \dif s \mathbf{1}_{[0,\sigma n]}(|X_1|) \right] \\
        & \quad - \frac{1}{n^2 \sigma^2} \left\langle \E \left[ Z \mathbf{1}_{[0,\sigma n]}(|X_1|) \right], \nabla \nabla_{\omega_{n,1}} f (x) \right\rangle.
    \end{align*}
    It follows that
    \begin{align} \label{eq:lemma2.1pr2}
        \begin{split}
        &\mathrel{\phantom{=}} \abs{\mathbb{E} \left[\left\langle \frac{\omega_{n,1}}{n\sigma},\nabla f(x)-\nabla f\left(x+ \frac{X_1-r\omega_{n,1}}{n\sigma}\right)\right\rangle \mathbf{1}_{[0,\sigma n]}(|X_1|) \right]} \\
        &\leq C \norm{\nabla^3 f}_{\op,\infty} n^{-3} \E \left[ \abs{Z}^2 \mathbf{1}_{[0,\sigma n]}(|X_1|) \right] + C \norm{\nabla^2 f}_{\op,\infty} n^{-2} \abs{\E \left[ Z \mathbf{1}_{[0,\sigma n]}(|X_1|) \right]} \\
        &= C \norm{\nabla^3 f}_{\op,\infty} n^{-3} \left\{ \E \left[ \abs{X_1}^2 \mathbf{1}_{[0,\sigma n]}(|X_1|) \right] + 2r \abs{\omega_{n,1}} \E \left[ \abs{X_1} \mathbf{1}_{[0,\sigma n]}(|X_1|) \right] + r^2 \abs{\omega_{n,1}}^2 \right\} \\
        &\quad + C \norm{\nabla^2 f}_{\op,\infty} n^{-2} \abs{\omega_{n,1}} \left( 1 - r \mathbb{P} (|X_1| \leq \sigma n ) \right) \\
        &\leq C \left( \norm{\nabla^3 f}_{\op,\infty} + \norm{\nabla^2 f}_{\op,\infty} \right) n^{-2}.
        \end{split}
    \end{align}
    On the other hand,
    \begin{align} \label{eq:lemma2.1pr3}
        \begin{split}
        &\mathrel{\phantom{=}} \abs{\mathbb{E} \left[ \left\langle \frac{\omega_{n,1}}{n\sigma},\nabla f(x)-\nabla f\left(x+ \frac{X_1-r\omega_{n,1}}{n\sigma}\right)\right\rangle \mathbf{1}_{(\sigma n,+\infty)}(|X_1|) \right]} \\
        & \leq C \norm{\nabla f}_{\op,\infty} n^{-1} \mathbb{P}\left(|X_1| > \sigma n \right) \leq C \norm{\nabla f}_{\op,\infty} n^{-2}.
        \end{split}
    \end{align}
    Combining \eqref{eq:lemma2.1pr1}, \eqref{eq:lemma2.1pr2} and \eqref{eq:lemma2.1pr3} shows that
    \begin{align*}
        \abs{II}
        \leq C \left( \sum_{\kappa = 1}^3 \norm{\nabla^\kappa f}_{\op,\infty} \right) n^{-2}.
    \end{align*}

    Taking $a = (\sigma n)^{-1}$ in Lemma \ref{Lemma:m one step}, we have
    \begin{align*}
        \abs{I} \leq C \left( \sum_{\kappa = 0}^2 \norm{\nabla^\kappa f}_{\op,\infty} \right) n^{- 1 - (\gamma \land 1)} (\ln n)^{\mathbf{1}_{ \{ \gamma = 1 \} }},
    \end{align*}
    in which we use the assumption $\abs{\epsilon(r,\theta)} \le K (1\wedge r^{-\gamma})$. The desired result follows from Lemma \ref{corollary:m one step} and above estimates of $I$, $II$.
    
    \noindent (iii) For $\alpha \in (0, 1)$, $\omega_{n,\alpha} = \mathbf{0}$ and $k_{\alpha} = 0$,
    so $II=0$. Combining $\abs{\epsilon(r,\theta)} \le K (1\wedge r^{-\gamma})$ and Lemma \ref{Lemma:m one step} with $a = \sigma^{-1} n^{-1 / \alpha}$, it can be derived that
    \begin{align*}
        \abs{I} &\leq C \left( \sum_{\kappa = 0}^2 \norm{\nabla^\kappa f}_{\op,\infty} \right) \left[ n^{-2} + n^{- 1 -\frac{\gamma}{\alpha \lor (1 - \gamma)}} (\ln n)^{\mathbf{1}_{ \{ \gamma = 1 - \alpha \} }} \right]+ C \norm{\nabla f}_{\op,\infty} n^{-\frac{1}{\alpha}} (\ln n)^{\mathbf{1}_{ \{ \gamma = 1 - \alpha \} }}.
    \end{align*}
    Furthermore, on condition that $\int_{\mathbb{S}^{d-1}} \theta \epsilon (r, \theta) \nu (\dif \theta) = \mathbf{0}$, $\forall r > 0$, we have
    \begin{align*}
        \abs{I}
        \leq C \left( \sum_{\kappa = 0}^2 \norm{\nabla^\kappa f}_{\op,\infty} \right) \left[ n^{-2} + n^{- 1 -\frac{\gamma}{\alpha \lor (1 - \gamma)}} (\ln n)^{\mathbf{1}_{ \{ \gamma = 1 - \alpha \} }} \right].
    \end{align*}
    The desired result follows from Lemma \ref{corollary:m one step} and above estimate of $I$.
\end{proof}

\section{Measure decomposition}
\begin{proof}[Proof of Lemma \ref{lemma:decom1}]
By definition, the distribution of $X$ is locally lower bounded by Lebesgue measure means there exist $\varepsilon_{0}$, $\tau>0$ and $a \in \mathbb{R}^d$ such that
\begin{align}
    \PP (X \in E) \geq \varepsilon_0 \Leb (E \cap B (a, \tau) ), \quad \forall E \in \mathscr{B} (\mathbb{R}^d ).
\end{align}
Consider a random vector $\whX$ with density function
\begin{align*}
    p_{\whX} (z) = c \eup^{- \frac{1}{\tau^2 - \abs{z-a}^2}} \mathbf{1}_{B (a, \tau)} (z), \qquad
    c = \left( \int_{B (a, \tau)} \eup^{- \frac{1}{\tau^2 - \abs{z-a}^2}} \, \dif z \right)^{-1},
\end{align*}
then for any $E \in \mathscr{B} (\mathbb{R}^d )$,
\begin{align*}
    \PP (\whX \in E)
    = c \int_{E \cap B (a, \tau)} \eup^{- \frac{1}{\tau^2 - \abs{z-a}^2}} \, \dif z
    \leq c \Leb (E \cap B (a, \tau) )
    \leq c \varepsilon_0^{-1} \PP (X \in E).
\end{align*}
Notice that
\begin{align*}
    0 < \varepsilon_0 c^{-1} = \varepsilon_0 \int_{B (a, \tau)} \eup^{- \frac{1}{\tau^2 - \abs{z-a}^2}} \, \dif z < \varepsilon_0 \Leb (B (a, \tau) ) \leq \PP (X \in B (a, \tau)) \leq 1.
\end{align*}
For $p = 1 - \varepsilon_0 c^{-1} \in (0, 1)$, we take $\chi \sim B(1, p)$ and $U$ independent of $\whX$ satisfying
\begin{align*}
    \PP ( U \in E ) = \frac{1}{p} \left[ \PP (X \in E) - (1-p) \PP ( \whX \in E ) \right],
\end{align*}
which implies that
\begin{align*}
    \PP (X \in E) = (1-p) \PP ( \whX \in E ) + p \PP ( U \in E ) = \PP \left( (1 - \chi) \whX + \chi U \in E \right),
\end{align*}
i.e., $X$ and $(1 - \chi) \whX + \chi U$ possess the same distribution.
\end{proof}

\begin{proof}[Proof of Lemma \ref{lemma:decom2}]
For any fixed $\wtalpha \in (\alpha, 2)$, take $q = 1 - \E [ \abs{X}^{\alpha - \wtalpha} \land 1 ] \in (0, 1)$, and consider independent random variable $\wtchi \sim B(1, q)$ and random vectors $\wtX$, $V$ such that for any $E \in \mathscr{B} (\R^d)$,
\begin{align*}
    \PP ( \wtX \in E ) = \frac{1}{1-q} \E \left[ (\abs{X}^{\alpha - \wtalpha} \land 1) \mathbf{1}_E (X) \right], \quad
    \PP ( V \in E ) = \frac{1}{q} \left[ \PP (X \in E) - (1-q) \PP ( \wtX \in E ) \right],
\end{align*}
which implies that
\begin{align*}
    \PP (X \in E) = (1-q) \PP ( \wtX \in E ) + q \PP ( V \in E ) = \PP \left( (1 - \wtchi) \wtX + \wtchi V \in E \right),
\end{align*}
i.e., $X$ and $(1 - \wtchi) \wtX + \wtchi V$ possess the same distribution. Notice that
\begin{align*}
    \frac{R^{\alpha - \wtalpha} \land 1}{1 - q} \PP ( X \in E )
    \leq \PP ( \wtX \in E )
    \leq \frac{1}{1 - q} \PP ( X \in E ), \quad
    \forall E \subseteq B (\mathbf{0}, R), \; R > 0,
\end{align*}
so the distribution of $\wtX$ is locally lower bounded by the Lebesgue measure if and only if the one of $X$ is. It remains to show that $\wtX$ satisfies Assumption \ref{A:mX} with parameters $\wtalpha$ and $\gamma$. In fact, for $r \geq 1$,
\begin{align*}
    \mathbb{P} \left( \lvert \wtX \rvert \ge r, \, \frac{\wtX}{\lvert \wtX \rvert}\in B \right)
    &= \frac{1}{1-q} \E \left[ \abs{X}^{\alpha - \wtalpha} \mathbf{1}_{\{ \abs{X} \geq r, \, X / \abs{X} \in B \}} \right] \\
    &= \frac{\wtalpha - \alpha}{1-q} \E \left[ \int_r^{+\infty} s^{\alpha - \wtalpha - 1} \mathbf{1}_{\{ r \leq \abs{X} < s, \, X / \abs{X} \in B \}} \, \dif  s \right] \\
    &= \frac{\wtalpha - \alpha}{1-q} \int_r^{+\infty} s^{\alpha - \wtalpha - 1} \, \dif s \int_{B} \left( \frac{A+\epsilon(r,\theta)}{r^{\alpha}} - \frac{A+\epsilon(s,\theta)}{s^{\alpha}} \right) \nu(\dif \theta) \\
    &= \frac{1}{1-q} \int_B \frac{1}{r^{\wtalpha}} \left[ \frac{A \alpha}{\wtalpha} + \epsilon (r, \theta) - (\wtalpha - \alpha) r^{\wtalpha} \int_r^{+\infty} \frac{\epsilon (s, \theta)}{s^{\wtalpha + 1}} \, \dif  s \right] \nu (\dif  \theta).
\end{align*}
Together with
\begin{align*}
    \mathbb{P} \left( \lvert \wtX \rvert \ge r, \, \frac{\wtX}{\lvert \wtX \rvert}\in B \right)
    = \mathbb{P} \left( \lvert \wtX \rvert \ge 1, \, \frac{\wtX}{\lvert \wtX \rvert}\in B \right) + \frac{1}{1-q} \mathbb{P} \left( r \leq \abs{X} < 1, \, \frac{X}{\abs{X}}\in B \right),
\end{align*}
holds for $0 < r < 1$, we have
\begin{align*}
    \mathbb{P} \left( \lvert \wtX \rvert \ge r, \, \frac{\wtX}{\lvert \wtX \rvert}\in B \right)=\int_{B} \frac{\wtA + \wtepsilon(r,\theta)}{r^{\wtalpha}} \nu(\dif \theta), \quad \forall r > 0, \; B \in \mathscr{B}(\mathbb{S}^{d-1}),
\end{align*}
where $\wtA = (A \alpha) / [(1-q) \wtalpha]$, and
\begin{align*}
    \wtepsilon (r, \theta) = \frac{(r \land 1)^{\wtalpha - \alpha}}{1-q} \epsilon (r, \theta) - \frac{(\wtalpha - \alpha) r^{\wtalpha}}{1-q} \int_{r \lor 1}^{+\infty} \frac{\epsilon (s, \theta)}{s^{\wtalpha + 1}} \, \dif  s + \frac{A [\wtalpha r^{\wtalpha - \alpha} - (\wtalpha - \alpha) r^{\wtalpha} - \alpha]}{(1 - q) \wtalpha} \mathbf{1}_{(0, 1)} (r).
\end{align*}
Furthermore,
\begin{align*}
    \abs{\frac{(\wtalpha - \alpha) r^{\wtalpha}}{1-q} \int_r^{+\infty} \frac{\epsilon (s, \theta)}{s^{\wtalpha + 1}} \, \dif  s}
    \leq \frac{K (\wtalpha - \alpha) r^{\wtalpha}}{1-q} \int_r^{+\infty} \frac{1}{s^{\gamma + \wtalpha + 1}} \, \dif  s
    = \frac{K (\wtalpha - \alpha)}{(1-q) (\gamma + \wtalpha) r^\gamma},
\end{align*}
which implies that
\begin{align*}
    \sup_{\theta\in \mathbb{S}^{d-1}} \abs{\wtepsilon(r,\theta)} &\leq \left[ \frac{2A }{(1-q) } + \frac{K (\gamma + 2\wtalpha - \alpha)}{(1-q) (\gamma + \wtalpha)} \right] \left(1\wedge \frac{1}{r^{\gamma}}\right). \qedhere
\end{align*}
\end{proof}

\bibliographystyle{amsplain}

\end{document}